\def\ps@pprintTitle{%
 \let\@oddhead\@empty
 \let\@evenhead\@empty
 \def\@oddfoot{\centerline{\thepage}}%
 \let\@evenfoot\@oddfoot}
\newtheorem{theorem}{Theorem}
\newtheorem*{thma}{Theorem A}
\newtheorem{definition}{Definition}
\newtheorem{lemma}{Lemma}
\newtheorem*{lemmaa}{Lemma A}
\newtheorem{proposition}{Proposition}
\newtheorem*{remark}{Remark}
\newtheorem{assumption}{Assumption}
\begin{document}

\begin{frontmatter}

\title{Strong law of large numbers for a branching random walk among Bernoulli traps}

\author{Mehmet \"{O}z}
\ead{mehmet.oz@ozyegin.edu.tr}
\ead[url]{https://faculty.ozyegin.edu.tr/mehmetoz/}

\address{Department of Mathematical Engineering, \"{O}zye\u{g}in University, T\"urkiye}

\begin{abstract}
We study a $d$-dimensional branching random walk (BRW) in an i.i.d.\ random environment on $\mathbb{Z}^d$ in discrete time. A Bernoulli trap field is attached to $\mathbb{Z}^d$, where each site, independently of the others, is a trap with a fixed probability. The interaction between the BRW and the trap field is given by the hard killing rule. Given a realization of the environment, over each time step, each particle first moves according to a simple symmetric random walk to a nearest neighbor, and immediately afterwards, splits into two particles if the new site is not a trap or is killed instantly if the new site is a trap. Conditional on the ultimate survival of the BRW, we prove a strong law of large numbers for the total mass of the process. Our result is \emph{quenched}, that is, it holds in almost every environment in which the starting point of the BRW is inside the infinite connected component of trap-free sites.     
\end{abstract}

\vspace{3mm}

\begin{keyword}
Branching random walk  \sep random environment  \sep strong law of large numbers  \sep Bernoulli traps  \sep hard killing
\vspace{3mm}
\MSC[2010] 60J80 \sep 60K37 \sep 60G50 \sep 60F15 \sep 60J10
\end{keyword}

\end{frontmatter}

\pagestyle{myheadings}
\markright{BRW among Bernoulli traps\hfill}

%\maketitle

\section{Introduction and main results}\label{section1}

We consider a discrete-time model of a branching random walk (BRW) in an i.i.d.\ random environment on the discrete lattice $\mathbb{Z}^d$. Let us refer to a lattice point as a \emph{site}. Each site $x\in\mathbb{Z}^d$ is a trap with probability $1-p\in(0,1)$ and is vacant with probability $p$ independently of the other sites. We refer to an environment formed in this way as \emph{Bernoulli traps}. The interaction between the BRW and the random environment is given by the \emph{hard killing} rule: a particle is killed and removed from the system instantaneously upon moving to a trap.  Given an environment, the dynamics of the BRW is as follows: at each time step each particle first moves according to a simple symmetric random walk on $\mathbb{Z}^d$ and then, gives birth to two offspring and dies at the new site if the new site is not a trap, or is killed without giving offspring at the new site if the new site is a trap. It is clear that the presence of traps tends to decrease the total mass, that is, the number of particles, of the BRW. Moreover, due to the possible killing of particles, given an environment $\omega$ the BRW may die out in finite time with positive $\omega$-dependent probability. Let $p_{d}$ be the critical threshold value of $p$ for site percolation in $d$ dimensions. In this paper, when $d\geq 2$ and $p>p_{d}$, we prove a quenched strong law of large numbers for the total mass of BRW among Bernoulli traps on $\mathbb{Z}^d$, conditional on the ultimate survival of the process. Here, \emph{quenched} refers to the set of environments for which the origin is inside an infinite connected component of trap-free sites.

Variations of the current problem were previously studied in a continuum setting, where the model included a branching Brownian motion (BBM) among Poissonian obstacles in $\mathbb{R}^d$. In \cite{E2008}, Engl\"ander considered a BBM in a random environment composed of spherical traps of fixed radius with centers given by a Poisson point process. The interaction between the BBM and the trap field was given by the \emph{mild} obstacle rule; that is, when a particle is inside the traps, it branches at a positive rate lower than usual and there is no killing of particles. Engl\"ander showed that on a set of full measure with respect to the Poisson point process, a kind of weak law of large numbers holds (see \cite[Theorem 1]{E2008}) for the mass of the BBM. His result was later improved by \"Oz \cite{O2021} to a strong law of large numbers, also including the case of zero branching in the trap field, and then extended by \"Oz \cite{O2024} to the setting of soft killing via a positive potential on $\mathbb{R}^d$ when $d\geq 2$. In all of the aforementioned works, the main challenge was to prove the lower bound of the law of large numbers. Here, we study a similar model in a discrete setting. The space $\mathbb{R}^d$ is replaced by the integer lattice $\mathbb{Z}^d$, continuous time is replaced by discrete time, and the BBM is replaced by a BRW. Moreover, instead of mild traps which only suppress the branching or soft traps which kill the particles at a certain rate (or with a certain probability in the discrete setting), we consider here hard traps which kill a particle immediately when the particle hits them. Compared to the cases of mild or soft traps, the proof of the lower bound is more challenging as both the system tends to produce fewer particles due to instant killing and the process lives in the unique infinite trap-free component in $\mathbb{Z}^d$, denoted by $\mathcal{C}$ here, which means possible large trap-free regions outside $\mathcal{C}$ in which the mass could otherwise grow are not accessible to the process. We refer the reader to Section~\ref{relatedmodels} for a more detailed description of related works, including the ones mentioned herein and one in which a critical BRW among Bernoulli traps is studied.

\subsection{The model}\label{section1.1}

We now present the model in detail. Firstly, we introduce the two random components, the BRW and the random trap field, and then we develop a model of random motion in a random environment by specifying an interaction between the random components.

\medskip

\textbf{1. Branching random walk:} Let $Z=(Z_n)_{n\geq 0}$ be a $d$-dimensional BRW in discrete time, where $n$ represents time. We suppose that the underlying walk is simple and symmetric and that the branching is binary. The process starts with a single particle, say at site $x\in\mathbb{Z}^d$, which at time $n=1$ first moves to a nearest neighbor of $x$ that is chosen uniformly randomly, that is, with probability $1/(2d)$, and then immediately gives birth to two offspring at this new site and dies. This means, at $n=1$, there are two particles located at one of the nearest neighbors of $x$. Starting from the site where they are born, each offspring particle behaves stochastically in the same way as their parent independently of all others and of everything that has happened in the past, and the process evolves through time in this way. The initial particle constitutes the $0$th generation, its offspring constitute the $1$st generation, and so forth. For each $n\geq 0$, $Z_n$ can be viewed as a finite discrete measure on $\mathbb{Z}^d$, which is supported at the positions of the particles at time $n$. We use $P_x$ and $E_x$, respectively, to denote the law and corresponding expectation for a BRW starting with a single particle at $x\in\mathbb{Z}^d$, and drop the subscript when $x$ is the origin. For $n\geq 0$ and a Borel set $A\subseteq\mathbb{R}^d$, we write $Z_n(A)$ to denote the mass of $Z$ that fall inside $A$ at time $n$, and set $N_n:=Z_n(\mathbb{Z}^d)$ to be the (total) mass of the BRW at time $n$.

By a \emph{simple symmetric random walk}, we mean a Markov chain $X=(X_n)_{n\geq 0}$ with state space $\mathbb{Z}^d$ and transition probabilities 
$$  \mathbf{P}\left(X_{n+1}=z \mid X_n=y \right) = \frac{1}{2d}, \quad\:\: z-y \in \left\{\pm\mathbf{e}_1, \ldots, \pm\mathbf{e}_d \right\}, $$
where $\{\mathbf{e}_j\}_{1\leq j\leq d}$ are the standard basis of unit vectors in $\mathbb{Z}^d$. We will occasionally refer to a simple symmetric random walk simply as a random walk. Throughout the paper, we denote by $X=(X_n)_{n\geq 0}$ a generic simple symmetric random walk, and use $(\mathbf{P}_x:x\in\mathbb{Z}^d)$ for the laws of random walk started at $x$ with corresponding expectations $(\mathbf{E}_x:x\in\mathbb{Z}^d)$. A branching random walk will always refer to one in discrete time with binary branching and motion component given by a simple symmetric random walk on $\mathbb{Z}^d$. 

\medskip

\textbf{2. Bernoulli traps on $\mathbb{Z}^d$:} The random environment on $\mathbb{Z}^d$ is created as follows. To each site $x\in\mathbb{Z}^d$, attach independently of all others a Bernoulli random variable $\xi(x)$ such that $\xi(x)=0$ with probability $p$ and $\xi(x)=1$ with probability $1-p$. We call a site $x$ \emph{vacant} if $\xi(x)=0$, and \emph{occupied} otherwise. Formally, the law of the trap configuration is given by the product measure $\mathbb{P}:=\mu^{\otimes\mathbb{Z}^d}$ on the sample space $\Omega:=\{0,1\}^{\mathbb{Z}^d}$ with the product $\sigma$-algebra, where $\mu$ is the probability measure on $\{0,1\}$ with $\mu(0)=p$ and $\mu(1)=1-p$. For a given $\omega\in\Omega$, we refer to the collection of all occupied sites as the \emph{trap field}. That is, the trap field is the random set of sites
$$ K(\omega):= \{x\in\mathbb{Z}^d : \xi(x)=1 \}  .  $$

\medskip

\textbf{3. Branching random walk among Bernoulli traps:} The interaction between the BRW and the random environment is given by the hard killing rule. More precisely, given a realization of the environment, that is, given an $\omega\in\{0,1\}^{\mathbb{Z}^d}$, the following happens to a particle born at time $n$ at a vacant site $x$ over one time step: it first moves according to a simple symmetric random walk to a nearest neighbor $y$ of $x$, and immediately afterwards,
\begin{itemize}
\item [$\bullet$] gives two offspring and dies if $y$ is not a trap, 
\item [$\bullet$] dies without giving offspring if $y$ is a trap. 
\end{itemize} 
We will call the model described here the \emph{BRW among Bernoulli traps on $\mathbb{Z}^d$}. Note that the current model may also be viewed as a BRW killed at the boundary of the random set $K=K(\omega)$. For $\omega\in\Omega$, we refer to $\mathbb{Z}^d$ to which $K(\omega)$ is attached simply as the random environment $\omega$, and use $P^\omega_x$ to denote the conditional law of the BRW starting with a single particle at site $x$ in the environment $\omega$. We set $P^\omega=P^\omega_0$ for simplicity.

It is known from percolation theory that in $d\geq 2$ there exists a threshold value $p_d>0$ such that when $p>p_d$, a unique infinite trap-free component exists in a.e.-environment (see \cite{AKN1987}, \cite{S1993}, and Section~\ref{sitepercol} for details). Here, a trap-free component means a connected set of trap-free sites in $\mathbb{Z}^d$. Let us denote this unique $\omega$-dependent infinite trap-free component by $\mathcal{C}$, and call $A\subseteq\mathbb{Z}^d$ \emph{accessible} in the environment $\omega$ if $A\subseteq\mathcal{C}$. Then, denoting the origin by $\mathbf{0}$, the conditions under which we may expect a law of large numbers (LLN) for the mass of the BRW to hold are as follows:
\begin{equation} \nonumber
\mathbb{P}\text{-a.s. on the set} \:\: \{\mathbf{0}\in\mathcal{C}\} \:\:  \text{and when} \:\: p> p_d .
\end{equation}
Therefore, hereafter, we assume that $p>p_d$ and a \emph{quenched} result will refer to one that holds $\mathbb{P}$-a.s.\ on the set $\{\mathbf{0}\in\mathcal{C}\}$. It can be shown that in the case of hard Bernoulli traps, even when $p>p_d$, on a set of full $\mathbb{P}$-measure the entire BRW is killed with positive $P^\omega$-probability (see Theorem~\ref{thm2}). Define for each $n\in\mathbb{N}$ the events of survival up to $n$ and ultimate survival, respectively, as
\begin{equation} \label{survival}
S_n=\{N_n\geq 1\}, \quad\quad S=\bigcap_{n\geq 1} S_n ,
\end{equation}
where $N_n=Z_n(\mathbb{Z}^d)$ is the total mass at time $n$. One sees that $\lim_{n\to\infty}P^\omega(S_n)=P^\omega(S)$ by continuity of measure from above. To obtain meaningful results, we condition the process on the event of ultimate survival. To that end, define the law $\widehat{P}^\omega$ as 
$$ \widehat{P}^\omega(\:\cdot\:):=P^\omega(\:\cdot\:\mid S). $$
The main objective of this paper is to prove a quenched strong law of large numbers (SLLN) for the mass of BRW among Bernoulli traps on $\mathbb{Z}^d$, conditional on the ultimate survival of the process.

%We may also write $S_n=\{\tau>n\}$, where $\tau=\inf\{m\geq 0:N_m=0\}$

The current model may also be viewed as branching mechanism added to the classical single-particle model of a simple symmetric random walk among Bernoulli traps. It is known from \cite{A1994} that when $d\geq 2$, $\mathbb{P}$-a.s.\ on the set $\{\mathbf{0}\in\mathcal{C}\}$, the large-time behavior of the survival probability of a single random walk among Bernoulli traps with hard killing is given by 
\begin{equation} \label{eqasymptotics}
\mathbf{P}_0^\omega(\tau_K>n) = \exp\left[-k(d,p)\frac{n}{(\log n)^{2/d}}(1+o(1)) \right] , \quad\:\: n\to\infty,
\end{equation}
where $\tau_K$ denotes the first time that the random walk hits the trap field, $\mathbf{P}_0^\omega$ is the conditional law of a random walk started at the origin in the environment $\omega$, and $k(d,p)$ is a positive constant (see \eqref{constant}). Our model inherits from the single-particle model in that $\mathbb{P}$-a.s.\ on the set $\{\mathbf{0}\in\mathcal{C}\}$ the expected total mass at time $n$ can be shown to be equal to the product of $\mathbf{P}_0^\omega(\tau_K>n)$ with $2^n$, which is the total mass at time $n$ of a BRW on $\mathbb{Z}^d$ without any traps (see Section~\ref{twocolor}). 

%(see formula (0.1) on p.58 of \cite{A1994}).

For quick reference, the following table collects the different probabilities that we use in this paper. The corresponding expectations will be denoted by similar fonts. In what follows, \emph{free} refers to the model where there is no trap field on $\mathbb{Z}^d$.

\begin{table}[h!]
\begin{center}
\begin{tabular}{ |c|l| } 
 \hline
 $\mathbb{P}$ & law of the Bernoulli environment on $\mathbb{Z}^d$ \\ 
 \hline
 $P_x$ & laws of free BRW started by a single particle at $x\in\mathbb{Z}^d$ \\ 
 \hline
 $P_x^\omega$ & conditional laws of BRW started by a single particle at $x$ in the environment $\omega$  \\ 
 \hline
 $\widehat{P}_x^\omega$  & $\widehat{P}_x^\omega(\:\cdot\:):=P_x^\omega(\:\cdot\: \mid S)$, where $S$ is the event of ultimate survival of the BRW \\
 \hline
 $\mathbf{P}_x$  &  laws of free simple symmetric random walk started at $x\in\mathbb{R}^d$ \\
 \hline
 $\mathbf{P}_x^\omega$  &  conditional laws of simple symmetric random walk started at $x$ in the environment $\omega$ \\
 \hline
\end{tabular}
\caption{The notation and description of the different probabilities used in this paper.}
\end{center}
\end{table}

\subsection{General framework and related models} \label{relatedmodels}

We now place our model within the general framework of branching random walks in random environments (BRWRE) for discrete-time models with state space $\mathbb{Z}^d$. The construction of the random environment in such models is based on two mechanisms: one-step transition between sites and reproduction. We now give a general formulation, essentially following the one in \cite{M2008}, where the mechanisms of transition between sites and reproduction are independent.    

Let $\mathbf{p}=(p(x,y))_{x,y\in\mathbb{Z}^d}$ be the one-step transition probabilities between sites and $\mathbf{q}=(q(x))_{x\in\mathbb{Z}^d}$ be the offspring laws, where each $q(x)=(q_k(x))_{k\in\mathbb{N}}$ is a probability measure on the nonnegative integers and $q_k(x)$ denotes the probability of giving $k$ offspring when located at site $x$. Typically, it is assumed that $p(x,y)$ depends only on $y-x$ and that there is a fixed finite set $U\subset \mathbb{Z}^d$ such that $\sum_{y-x\,\in U}p(x,y)=1$ for all $x$. In view of these assumptions, with a slight abuse of notation, let us set $p_z(x)=p(x,x+z)$, $p(x)=(p_z(x))_{z\in U}$ and $\mathbf{p}=(p(x))_{x\in\mathbb{Z}^d}$. 

We now make the environment random. For a countable set $A$, define the collection of all probability measures on $A$ as
$$ \mathcal{P}(A):=\left\{\mu=(\mu_a)_{a\in A} \in [0,1]^A : \sum_{a\in A}\mu_a =1 \right\} . $$
Choose $(p(x))_{x\in\mathbb{Z}^d}$ independently from $\mathcal{P}(U)$ according to a common distribution $P$, and choose $(q(x))_{x\in\mathbb{Z}^d}$ independently from $\mathcal{P}(\mathbb{N})$ according to a common distribution $Q$. Assume that $(p(x))_{x\in\mathbb{Z}^d}$ and $(q(x))_{x\in\mathbb{Z}^d}$ are independent of each other. In this way, an i.i.d.\ environment on $\mathbb{Z}^d$ is obtained as
$$ \omega := (\mathbf{p},\mathbf{q}) = (p(x), q(x))_{x\in \mathbb{Z}^d}  . $$

In the current model, the transition probabilities $\mathbf{p}$ are deterministic since particles move according to simple symmetric random walks independent of the position they are at, whereas the offspring laws are position-dependent and random. More precisely, the law $Q$ of the general formulation above is such that $q_2(x)=1$ with probability $p$, corresponding to vacant sites; and $q_0(x)=1$ with probability $1-p$, corresponding to occupied sites, i.e., traps. 

In many models of BRWRE, the mechanisms of transition between sites and reproduction are assumed to be independent as above. In a more general setting, these need not be independent \cite{CP2007, GMPV2010}; moreover, the transition probabilities and/or offspring laws may also depend on time as well as position \cite{Y2008, CY2011, GM2013}. 

Studies on BRWRE mainly focus on survival properties including questions of recurrence and transience, local and global growth of mass, and the extremes of the system such as the maximal displacement of particles from the starting point of the process. In \cite{M2008}, where the random environment was constructed as above, a classification of BRWRE on $\mathbb{Z}^d$ (more generally on Cayley graphs) in recurrence and transience was given. In \cite{GMPV2010}, a one-dimensional model on $\mathbb{Z}$ was studied, where the mechanisms of branching and transition between sites are not independent of each other and both are position-dependent. The process was allowed to die out, and a characterization was given for various types of survival. A similar model on $\mathbb{Z}^d$ was studied in \cite{CP2007}, where each particle gives at least one offspring so that extinction is not possible, and a dichotomy on recurrence and transience was proved as well as a shape theorem for the set of sites visited by the BRW. In \cite{Y2008}, a model on $\mathbb{Z}^d$, $d\geq 3$, in which the offpsring law $\mathbf{q}=(q(x,t):x\in\mathbb{Z}^d, t\in\mathbb{N})$ is i.i.d.\ in both space and time was studied, and several results on the local and global growth were proved including a central limit theorem for the density of the population. Later, in \cite{GM2013}, the same model was studied in the context of survival, and a survival criterion was obtained for the BRW in terms of the laws of the space-time i.i.d.\ environment. We also mention \cite{D2007}, in which the large-time asymptotics of the speed of the rightmost particle was proved for a BRW on $\mathbb{Z}$, where the reproduction law is fixed but the transition probabilities are position-dependent.

In the current work, we prove an SLLN for the total mass of the process. To the best of our knowledge, such strong laws regarding the global growth of the process are quite rare in the literature of BRWRE. One such result can be found in \cite{CY2011}, in which a model on $\mathbb{Z}^d$ where particles move according to simple symmetric random walks and reproduce according to space-time i.i.d.\ offspring distributions, was studied in the context of survival probability and local/global growth. The model was associated with directed polymers in random environment (DPRE), criteria for both local and global survival were given in terms of the free energy of the associated DPRE, and exponential growth rates for the population of BRWRE were identified, including an SLLN for the total mass of the process \cite[Theorem 2.1(b), (2.7)]{CY2011}. 

%However, this result does not apply to the current model since the current law of the offspring distributions (that is, $q_2(x)=1$ with probability $p$ and $q_0(x)=1$ with probability $1-p$) does not satisfy Assumption (1.7) therein. Moreover, as \cite[Theorem 2.1(b)]{CY2011} reveals, the global growth of mass is purely exponential for almost every environment therein whereas in our case there is also a subexponential factor (see Theorem~\ref{thm1}). 

Models of BRWRE are not limited to discrete space and time. We refer the reader to \cite{K2021} for a survey of various problems and recent advances on the topic of BRWRE on $\mathbb{Z}^d$ in continuous-time, and to \cite{HL2024} and \cite{MM2019} for a model of BRWRE on $\mathbb{R}$ in discrete-time. We emphasize that the literature on the topic of BRWRE is vast, our list is far from being complete, and we have merely reviewed a selection.

We now review several models that are intimately related to the present model. The first one is about a BRW among Bernoulli traps on the discrete lattice $\mathbb{Z}^d$ similar to our model, whereas the other two are on the continuum analogue of the present model, where a branching Brownian motion (BBM) replaces a BRW and a random environment in $\mathbb{R}^d$ is created via a Poisson point process. We continue to use $\mathbb{P}$ for the law of the environment, $P^\omega$ for the law of the branching process conditional on the realization $\omega$ of the environment, and the definitions of $S_n$ and $S$ in \eqref{survival}.

\subsubsection{Mild obstacles and critical branching on $\mathbb{Z}^d$} \label{mildcritical}
In \cite{EP2017}, a critical BRW among mild Bernoulli traps on $\mathbb{Z}^d$ was studied in discrete time. In this model, given an environment $\omega$, at each time step each particle first moves according to a simple symmetric random walk and then, if the new location is a vacant site the particle either vanishes or splits into two offspring particles with equal probabilities, and if the new location is occupied by a trap then nothing happens to the particle. Hence, traps only suppress the branching of particles but not necessarily reduce the total mass since they may also prevent death. It was shown that the process ultimately dies out, that is, $P^\omega(S)=0$, in almost every environment. Then, the quenched asymptotic behavior of $P^\omega(S_n)$ was obtained as time tends to infinity. In the same paper, similar results were obtained for the case of subcritical branching, too.

\subsubsection{Mild obstacles in the continuum} \label{mildsection}
The mild obstacle problem for branching brownian motion (BBM) was studied in \cite{E2008} and \cite{O2021}. In this model, the source of randomness in the environment is a homogeneous Poisson point process $\Pi$ on $\mathbb{R}^d$, and the associated trap field is given as 
$$ K(\omega) = \bigcup_{x_i \in \text{supp}(\Pi)} \bar{B}(x_i,a)  ,$$
where $\bar{B}(x,a)$ denotes the closed ball of radius $a$ centered at $x\in\mathbb{R}^d$. The interaction between the BBM and the trap field is given by the following rule: when a particle is outside $K$, it branches at rate $\beta_2$, whereas when inside $K$, it branches at a lower rate $\beta_1$. That is, with $0\leq \beta_1<\beta_2$, under the law $P^\omega$, the branching rate is spatially dependent as
$$  \beta(x,\omega) := \beta_2\,\mathbbm{1}_{K^c(\omega)}(x) + \beta_1\,\mathbbm{1}_{K(\omega)}(x)  .  $$
Hence, there is no killing of particles but only a partial or total suppression of branching in certain random regions in $\mathbb{R}^d$. The rate $\beta_1$ was taken to be strictly positive in \cite{E2008}, and a kind of quenched weak LLN was proved for the mass of BBM among the mild obstacles (see \cite[Theorem 1]{E2008}). In \cite{O2021}, the case of $\beta_1=0$ was allowed, and the weak law proved in \cite{E2008} was improved to the strong law (see \cite[Theorem 2.1]{O2021}).

\subsubsection{Soft killing in the continuum} \label{softsection}
The soft killing problem for BBM was considered in \cite{O2024}, and can be described as follows. Let $W:\mathbb{R}^d\to (0,\infty)$ be a positive, bounded, measurable, and compactly supported \emph{killing function}, and for each realization $\omega=\sum_i \delta_{x_i}$ of the Poisson point process and $x\in\mathbb{R}^d$, define the potential
\begin{equation} 
V(x,\omega)=\sum_i W(x-x_i). \nonumber
\end{equation}
The Poissonian trap field $K(\omega)$ in $\mathbb{R}^d$ is formed as follows:
\begin{equation} 
 x\in K(\omega) \:\:\Leftrightarrow\:\:   V(x,\omega)>0  . \nonumber
\end{equation}
The \emph{soft killing} rule is that particles branch at a normal rate $\beta$ when outside $K$, whereas inside $K$ they are killed at rate $V=V(x,\omega)$ and the branching is completely suppressed. (A formal treatment of BBM killed at rate $V$ in $\mathbb{R}^d$ can be found in \cite{LV2012}.) It was shown in \cite{O2024} that under soft killing on a set of full $\mathbb{P}$-measure the entire BBM is killed with positive $P^\omega$-probability, and then, conditional on the event $S$, a kind of quenched weak LLN was proved for the mass of the BBM when $d\geq 2$  (see \cite[Theorem 2.1]{O2024}).

\subsection{Main results} \label{section2}

Let $\omega_d$ and $\lambda_d$, respectively, denote the volume of the unit ball and the principal Dirichlet eigenvalue of $-\frac{1}{2d}\Delta$ on the unit ball in $\mathbb{R}^d$. Recall that in our model, each site in $\mathbb{Z}^d$ is vacant with probability $p>0$. Define the positive constants
\begin{equation} \label{eqro}
R_0=R_0(d,p):=\left[\frac{d}{\omega_d\left(\log\frac{1}{p} \right)}\right]^{1/d}
\end{equation}
and
\begin{equation} \label{constant} 
k(d,p):=\lambda_d \left[\frac{\omega_d \left(\log\frac{1}{p} \right)}{d}\right]^{2/d}.
\end{equation}   
With these definitions, observe that $R_0=R_0(d,p)=\sqrt{\lambda_d/k(d,p)}$.

We now present our main results. Theorem~\ref{thm2} is on the survival probability of the entire BRW among Bernoulli traps under the hard killing rule, and Theorem~\ref{thm1} is a quenched SLLN for the mass of the BRW.

\begin{theorem}[Survival probability for BRW among Bernoulli traps] \label{thm2}
Consider a BRW among Bernoulli traps on $\mathbb{Z}^d$, where $d\geq 2$. Under the hard killing rule, $\mathbb{P}$-a.s.\ on the set $\{\mathbf{0}\in\mathcal{C}\}$,  
\begin{equation}
0 < P^\omega(S) < 1 . \nonumber 
\end{equation}
\end{theorem}

\medskip

Recall that the law $\widehat{P}^\omega$ is defined as $\widehat{P}^\omega(\:\cdot\:)=P^\omega(\:\cdot\:\mid S)$ and that $N_n$ denotes the total mass of the BRW at time $n$.

\begin{theorem}[Quenched SLLN for BRW among Bernoulli traps]\label{thm1}
Consider a BRW among Bernoulli traps on $\mathbb{Z}^d$, where $d\geq 2$. Under the hard killing rule, $\mathbb{P}$-a.s.\ on the set $\{\mathbf{0}\in\mathcal{C}\}$, 
\begin{equation} \label{eqthm1}
\underset{n\rightarrow\infty}{\lim} (\log n)^{2/d}\left(\frac{\log N_n}{n}-\log 2\right)=-k(d,p) \quad\:\:\:\: \widehat{P}^\omega\text{-a.s.}  
\end{equation}
\end{theorem}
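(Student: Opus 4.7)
The plan is to prove the theorem by establishing matching $\widehat{P}^\omega$-a.s.\ upper and lower bounds on the quantity
$$ q_n := (\log n)^{2/d}\bigl(\log N_n/n - \log 2\bigr). $$
The upper bound will follow from a first-moment / Markov argument, while the lower bound, which is the core of the theorem, requires a localization-and-second-moment strategy built around a trap-free ball of optimal radius inside $\mathcal{C}$.

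\textbf{Upper bound.} I would start from the many-to-one identity $E^\omega[N_n] = 2^n\,\mathbf{P}_0^\omega(\tau_K > n)$ (the two-color identity to be developed in Section~\ref{twocolor}) together with Antal's quenched asymptotics~\eqref{eqasymptotics}. Markov's inequality gives, for any $\varepsilon > 0$, $\mathbb{P}$-a.s.\ on $\{\mathbf{0}\in\mathcal{C}\}$ and for $n$ large,
$$ P^\omega\!\left(N_n \geq 2^n\exp\!\bigl[-(k(d,p)-\varepsilon)\tfrac{n}{(\log n)^{2/d}}\bigr]\right) \leq \exp\!\bigl[-\tfrac{\varepsilon}{2}\tfrac{n}{(\log n)^{2/d}}\bigr]. $$
Since $n/(\log n)^{2/d}$ eventually exceeds any multiple of $\log n$, the right-hand side is summable, and the first Borel--Cantelli lemma combined with $\varepsilon\downarrow 0$ along rationals yields $\limsup_n q_n \leq -k(d,p)$ $P^\omega$-a.s.; since $P^\omega(S) > 0$ by Proposition~\ref{prop2}, the same conclusion passes to $\widehat{P}^\omega$.

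\textbf{Lower bound.} Fix $\delta > 0$ and set $R_n := R_0(1-\delta)(\log n)^{1/d}$. The plan is in three steps, stitched together by Borel--Cantelli. (i) \emph{Clearing:} $\mathbb{P}$-a.s.\ on $\{\mathbf{0}\in\mathcal{C}\}$, for all large $n$ there is a site $z_n$ within distance $n^{1/2}$ of $\mathbf{0}$ such that the discrete ball $B_n := B(z_n, R_n)\cap\mathbb{Z}^d$ is trap-free and contained in $\mathcal{C}$. This comes from a standard second-moment/independence argument: the $\mathbb{P}$-probability that a given $B_n$ is trap-free is $\approx n^{-d(1-\delta)^d}$, so multiplying by the polynomially many disjoint candidate balls in $B(\mathbf{0},n^{1/2})$ gives an expected count that blows up, and the extra constraint $B_n\subset\mathcal{C}$ costs only a uniform positive factor by the site-percolation estimates of Section~\ref{sitepercol}. (ii) \emph{Transport:} with $T_n := \lceil n^{1/2}\rceil$, I would show that with $\widehat{P}^\omega$-probability summably close to one, some particle of $Z$ sits in $B_n$ at time $T_n$; under $\widehat{P}^\omega$ the event $S$ forces exponential growth of $N_k$ on an intermediate scale, and each such particle then reaches $B_n$ via its own trap-free path with exponentially small but nontrivial probability, which is compensated by the exponential population. (iii) \emph{Growth inside the clearing:} conditional on step~(ii), let $Z_n^\ast$ denote the mass at time $n$ of descendants of one such particle whose entire subsequent genealogical lines stay in $B_n$. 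The many-to-one formula plus discrete-to-continuum convergence of the principal Dirichlet eigenvalue on $B_n$ to $\lambda_d/R_n^2 = k(d,p)(1-\delta)^{-2}(\log n)^{-2/d}$ give
$$ E^\omega[Z_n^\ast] = 2^{n-T_n}\exp\!\left[-\frac{k(d,p)}{(1-\delta)^2}\cdot\frac{n}{(\log n)^{2/d}}\,(1+o(1))\right], $$
and a many-to-two computation combined with Chebyshev and Borel--Cantelli yields $Z_n^\ast \geq \tfrac12 E^\omega[Z_n^\ast]$ for all large $n$. Since $Z_n^\ast \leq N_n$, sending $\delta\downarrow 0$ along a countable sequence gives $\liminf_n q_n \geq -k(d,p)$ $\widehat{P}^\omega$-a.s.

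\textbf{Main obstacle.} The principal difficulty will be the variance estimate in step~(iii): the many-to-two formula expresses $E^\omega[(Z_n^\ast)^2]$ as a sum over splitting times of Dirichlet-type two-point functions on $B_n$, and making the Chebyshev failure probability $\operatorname{Var}^\omega(Z_n^\ast)/E^\omega[Z_n^\ast]^2$ summable in $n$ requires quantitative control of the discrete Dirichlet spectrum on $B_n$ beyond the leading principal eigenvalue, since the sum is driven by the smallest eigenvalue while the normalization involves its square. A secondary difficulty is combining the transport step with the growth step cleanly: the former is most naturally phrased under $\widehat{P}^\omega$ while the latter is cleanest starting from the deterministic event $\{\text{particle in }B_n\text{ at }T_n\}$, so care is needed to merge the two collections of summable-failure events into one almost-sure conclusion without double-counting the conditioning on $S$ or compounding the $(2d)^{-\Theta(n^{1/2})}$ cost of the forced path.
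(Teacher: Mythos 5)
Your upper bound is fine and matches the paper's argument, but the lower bound rests on a false geometric claim, and this is precisely the point where the whole difficulty of the theorem lives. In step (i) you ask for a trap-free ball of radius $R_n=R_0(1-\delta)(\log n)^{1/d}$ within distance $n^{1/2}$ of the origin. The $\mathbb{P}$-probability that a fixed such ball is trap-free is $p^{\omega_d R_n^d}=n^{-d(1-\delta)^d}$, while the number of disjoint candidate balls in $B(\mathbf{0},n^{1/2})$ is only of order $n^{d/2}/\log n$; hence the expected count is $n^{d/2-d(1-\delta)^d+o(1)}$, which tends to $0$ for small $\delta$ (you would need $(1-\delta)^d<1/2$). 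In fact the nearest clearing of radius $(1-\delta)R_0(\log n)^{1/d}$ typically sits at distance of order $n^{(1-\delta)^d}$, i.e.\ almost linearly far away, and reaching it costs a per-particle factor $(2d)^{-c\,n^{(1-\delta)^d}}$ along a path forced to stay in $\mathcal{C}$. Your step (ii) cannot pay this: conditioning on survival only forces of order $\exp[k_3 n/(\log n)^{2/d}]$ fission events (cf.\ \eqref{eqct}), a subexponential population, so the ``exponential population compensates the path cost'' mechanism fails at any distance scale that is polynomial in $n$. This is exactly why Section~\ref{section6} proceeds by a two-stage bootstrap: the survival conditioning is first used to hit a \emph{suboptimal} large clearing at distance only $\rho(n)=k_2 n/(\log n)^{2/d}$ (matching the subexponential population, with $Ck_2<k_3$), exponential growth $e^{\delta_1 m(n)}$ is generated there, and only then is the $(2d)^{-4Ad\psi m(n)}$ transport cost (with $A$ chosen small) paid to reach a \emph{huge} clearing for the time scale $m(n)$; since $\log m(n)\sim\log n$, its radius $R_0(\log m(n))^{1/d}$ still delivers the optimal constant $k(d,p)$. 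Without this bootstrap your scheme cannot get below the constant $2^{2/d}k(d,p)$ that the largest clearing within distance $n^{1/2}$ would allow.

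A second, smaller gap: in step (iii) you conclude ``for all large $n$'' by Chebyshev plus Borel--Cantelli, but the failure probabilities available from the second-moment estimates (Proposition~\ref{prop4}) are of order $e^{-c(\log n)^{1/d}}$, which is not summable in $n$. Since $N_n$ is not monotone under hard killing, one cannot interpolate for free between good times either; the paper resolves this in Section~\ref{part4} by running Borel--Cantelli along the sparse subsequence $f(k)=\lfloor\exp\{(2/c_*)^d(\log k)^d\}\rfloor$ and separately controlling the mass at all intermediate times via sub-BRWs confined to small balls inside the huge clearing. Your proposal would need an analogous device even if step (i) were repaired.
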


\begin{remark}
Theorem~\ref{thm1} is called a law of large numbers, because it says that in a sense the mass of BRW among Bernoulli traps grows as its expectation (see Proposition~\ref{prop3}) as $n\rightarrow\infty$. That is, in a loose sense,
$$ \frac{\log N_n}{n} \:\approx \frac{\log E^\omega[N_n]}{n}, \quad n\to\infty, \quad \widehat{P}^\omega\text{-a.s.}  $$
The reason why we call it \emph{quenched} is that it holds in almost every environment where the starting point of the BRW belongs to the unique infinite trap-free cluster (see Section~\ref{sitepercol}). 

We emphasize once more that similar LLNs were shown to hold in the continuum setting for the models of BBM among mild obstacles (see \cite{E2008} and \cite{O2021}) and among obstacles with soft killing (see \cite{O2024}). 

%Since the result is not affected by the nature of the traps, whether they be mild or soft or hard, or the trap radius $a$, we see that the LLN for BBM among Poissonian obstacles is quite robust to the nature and details of the trapping mechanism as long as the traps are Poissonian. 
\end{remark}

\subsection{Further notation and outline}  We introduce standard notation that will be used throughout the paper. We use $\mathbb{N}_+$ and $\mathbb{R}_+$, respectively, as the set of positive integers and positive real numbers, and set $\mathbb{N}=\mathbb{N}_+\cup\{0\}$. For any positive integer $k$, we set $\mathbb{N}_{\geq k}=\{k,k+1,k+2,\ldots\}$. For $x\in\mathbb{R}^d$, we denote by $|x|$ the Euclidean distance of $x$ to the origin. For a set $A\subseteq\mathbb{R}^d$ and $x\in\mathbb{R}^d$, we define their sum in the sense of sum of sets as $x+A:=\{x+y:y\in A\}$. For two functions $f,g:\mathbb{N}_+\to\mathbb{R}_+$, we write $g(n)=o(f(n))$ and $f(n)\sim g(n)$, respectively, if $g(n)/f(n)\rightarrow 0$ and $g(n)/f(n)\rightarrow c$ for some positive constant $c>0$ as $n\rightarrow\infty$. For an event $A$, we use $A^c$ to denote its complement, and $\mathbbm{1}_A$ its indicator function. We use $c$ as a generic positive constant, whose value may change from line to line. If we wish to emphasize the dependence of $c$ on a parameter $p$, then we write $c_p$ or $c(p)$. 

The rest of the paper is organized as follows. In Section~\ref{section3}, we collect some preparatory results for the proofs of our main results. In Section~\ref{section4}, we prove Theorem~\ref{thm2} and then construct a quenched environment in which the SLLN for BRW among Bernoulli traps will hold. This section could also be viewed as the first part of the proof of Theorem~\ref{thm1}. Then, the proof of the upper bound of Theorem~\ref{thm1} is given in Section~\ref{section5}, and the proof of the lower bound is given in Section~\ref{section6}. 

%Once a particle is born at site $x$, within a unit time step, it first moves according to a simple symmetric random walk to one of the neighboring sites, say $y$, and immediately afterwards the following happens to it:
%\begin{itemize}
%\item[1)]   if there is no obstacles at the new site, it splits into two offspring particles,
%\item[2)]  if there is an obstacle at the new site, it dies, and is removed from the system. 
%\end{itemize}

\section{Preparations}\label{section3}

In this section, we collect some preparatory results that will be needed for the proofs of our main results. 

\subsection{A two-colored branching random walk}\label{twocolor}

We will introduce a \emph{two-colored BRW}, which will be suitable for the setting of Bernoulli traps in $\mathbb{Z}^d$. Prior to that, we present some necessary formalism regarding an ordinary BRW.  

For $n\in\mathbb{N}$, let $\mathcal{N}_n$ be the set of particles of the BRW at time $n$. For $u\in\mathcal{N}_n$, let $X_u(n)$ denote its position at time $n$. For two particles $u$ and $v$, we write $v<u$ to mean that $v$ is an ancestor of $u$. Now, extend the definition of $X_u$ so that $X_u:\{0,1,\ldots,n\}\to\mathbb{Z}^d$, where 
\begin{equation} \nonumber
X_u(k) := 
\begin{cases}  
X_u(n) & \text{if} \:\: k=n, \\
X_v(k) & \text{if} \:\: v<u \:\: \text{and} \:\: v\in\mathcal{N}_k .
\end{cases}
\end{equation}
For $u\in\mathcal{N}_n$, we will refer to  
$$ X_u^{(n)}:=\left\{X_u(k):0\leq k\leq n\right\} $$
as the \emph{ancestral line (or path) of $u$ up to time $n$}. In other words, the ancestral line of $u$ up to $n$ is the set of sites visited up to $n$ by the particle $u$ and all its ancestors. The following definition is related to the notion of an ancestral line, and will be useful in the subsequent proofs.
\begin{definition}[Ancestral line segment]\label{def2}
The range of a function $\gamma:\{k\in\mathbb{N}:n_1\leq k\leq n_2\}\to\mathbb{Z}^d$ is called an \emph{ancestral line segment from time $n_1$ to time $n_2$} for a BRW if there exists $u\in\mathcal{N}_{n_2}$ such that $X_u(k)=\gamma(k)$ for all $k\in\mathbb{N}$ with $n_1\leq k\leq n_2$. 
\end{definition} 
We now introduce a two-colored BRW as follows. Let $A\subseteq\mathbb{Z}^d$ be any set of sites and let $A^c=\{x\in\mathbb{Z}^d:x\notin A\}$. For each $n\in\mathbb{N}$, color the particles in $\mathcal{N}_n$ according to the following protocol: $u\in\mathcal{N}_n$ is colored blue provided $X_u^{(n)}\subseteq A^c$; otherwise, it is colored red. Similarly, each ancestral line up to time $n$ is colored blue provided it does not hit $A$ over $[0,n]$; otherwise, it is colored blue up until it hits $A$, and then the remaining part of it is colored red. In this way, for any given set $A\subseteq\mathbb{Z}^d$, we obtain a BRW enriched by a coloring of its particles and ancestral lines with blue and red, where the coloring depends on the set $A$. Let $P^A$ be the law for the two-colored BRW, $E^A$ be the corresponding expectation, and $N_n^B$ be the number of blue particles at time $n$.

\begin{proposition}[Expected mass] \label{prop3}
Consider a BRW among Bernoulli traps on $\mathbb{Z}^d$, where $d\geq 2$. Under the hard killing rule, $\mathbb{P}$-a.s.\ on the set $\{\mathbf{0}\in\mathcal{C}\}$,
\begin{equation} \label{eqexpectedmass2}
E^\omega[N_n]=2^n\exp\left[-k(d,p)\frac{n}{(\log n)^{2/d}}(1+o(1))\right] .
\end{equation}
\end{proposition}

\begin{proof}
Let $\omega\in\{\mathbf{0}\in\mathcal{C}\}$. Observe by setting $A=K(\omega)$ in the formulation above that hard killing of a particle by $K$ under the law $P^\omega$ corresponds to changing the color of its ancestral line from blue to red (but in this case, the particle is not killed; it continues to live and branch as usual) for a two-colored BRW under the law $P^{K}$. This also implies, for each $n\in\mathbb{N}$,
$$  P^\omega(N_n=k) = P^K(N_n^B=k), \quad\:\: k=0,1,2,\ldots  $$ 
Let $\tau_K$ be the first hitting time to $K$ of a generic simple symmetric random walk $X=(X_n)_{n\geq 0}$ in $d$ dimensions, and define $\tau_K^u$ similarly for the ancestral line of particle $u\in\mathcal{N}_n$. Applying the many-to-one lemma for the BRW $\omega$-wise, from for instance \cite[Lemma 8]{HR2017} with the choices and notation $k=1$, $Y(v)=\mathbbm{1}_{\{\tau_K^v>n\}}$ and $Y=\mathbbm{1}_{\{\tau_K>n\}}$ therein and noting that $m_1(X_{p(v)})=2$ in the case of binary branching, we obtain
\begin{align}
E^\omega[N_n] &= E^K\left[N_n^B\right] = E\left[\sum_{u\in\mathcal{N}_n} \mathbbm{1}_{\{\tau_K^u>n\}} \right] \nonumber \\
&= 2^n \mathbf{E}_0 \left[\mathbbm{1}_{\{\tau_K>n\}}\right] = 2^n \mathbf{P}_0\left(\tau_K>n\right). \label{eqexpectedmass}
\end{align}
Observe that $\mathbf{P}_0\left(\tau_K>n\right)$ is the survival probability up to $n$ of a single random walk among hard Bernoulli obstacles in the environment $\omega$. Since $\omega\in\{\mathbf{0}\in\mathcal{C}\}$ by assumption, the result then follows from \eqref{eqexpectedmass} and the quenched (single-particle) survival asymptotics for a random walk among Bernoulli traps in \eqref{eqasymptotics}.  
\end{proof}

\subsection{Some large-deviation results on the mass of the branching random walk}

In this section, we state and prove two results of independent interest that concern the large-time mass of a free BRW. To this end, we introduce further notation. Denote by $B(z,r)$ the open ball of radius $r>0$ centered at $z\in\mathbb{R}^d$, and set $B_r:=B(0,r)$ for simplicity. For a Borel set $A\subseteq\mathbb{R}^d$, let $\sigma_A=\inf\{n\geq 0:X_n\notin A\}$ denote the first exit time of a random walk $X=(X_n)_{n\geq 0}$ out of $A$, and define 
\begin{equation} \nonumber
p_n(r) := \mathbf{P}_0\left(\sigma_{B_r}>n\right)    . 
\end{equation}

\begin{proposition} \label{prop1}
Let $0<c<1$ be fixed. For $r>0$, let $Y_n(r)$ denote the number of particles at time $n$ of a standard BRW whose ancestral lines up to $n$ do not exit $B_r$. Then, for every $\varepsilon>0$, there exists $R>0$ large enough such that 
\begin{equation}
P\left(Y_n(R) < c\, p_n(R) 2^n\right) \leq \varepsilon  \nonumber
\end{equation}
for all $n>R$.
\end{proposition}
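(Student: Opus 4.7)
The approach is a second moment estimate for $Y_n(R)$ combined with Chebyshev's inequality. First, since the total mass of a free BRW at time $n$ is deterministically $2^n$ and each ancestral line is a simple symmetric random walk from the origin, the many-to-one lemma gives $E[Y_n(R)] = 2^n\,\mathbf{P}_0(\sigma_{B_R} > n) = 2^n p_n(R)$. For the second moment, decompose ordered pairs $(u,v) \in \mathcal{N}_n \times \mathcal{N}_n$ according to the generation $k$ of their most recent common ancestor. For each $k \in \{0,\ldots,n-1\}$ there are $2^{2n-k-1}$ ordered pairs whose ancestral lines share a common path up to generation $k+1$ and evolve as independent simple random walks thereafter, leading to the many-to-two type identity
\[ E[Y_n(R)^2] \;=\; 2^n p_n(R) + \sum_{k=0}^{n-1} 2^{2n-k-1}\,\mathbf{E}_0\!\left[\mathbbm{1}_{\{\sigma_{B_R}>k+1\}}\,\mathbf{P}_{X_{k+1}}(\sigma_{B_R}>n-k-1)^2\right]. \]

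The cross-term integrands are controlled by writing $\mathbf{P}_x(\sigma_{B_R}>m)^2 \leq p_m(R)\,\mathbf{P}_x(\sigma_{B_R}>m)$, using that the origin maximises the survival probability on the symmetric ball $B_R$ (a consequence of Perron--Frobenius applied to the principal Dirichlet eigenfunction, which is centrally symmetric and peaked at the origin). The Markov property then collapses the remaining expectation to $p_n(R)$, so that
\[ \frac{E[Y_n(R)^2]}{E[Y_n(R)]^2} \;\leq\; \frac{1}{E[Y_n(R)]} + \frac{1}{2^n p_n(R)}\sum_{j=0}^{n-1} 2^j p_j(R). \]
The key spectral input is that the principal Dirichlet eigenvalue $\lambda_R$ of the SRW transition operator restricted to $B_R$ tends to $1$ as $R\to\infty$, and that $p_j(R)$ is asymptotic to a constant times $\lambda_R^j$. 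Hence for $R$ large enough that $2\lambda_R > 1$, the series $\sum_{j=0}^{n-1} 2^j p_j(R)$ is geometric-type with ratio $2\lambda_R$, yielding $\sum_{j=0}^{n-1}2^j p_j(R) \leq \tfrac{1+o(1)}{2\lambda_R-1}\cdot 2^n p_n(R)$ uniformly in $n \gg R^2$; in the intermediate regime $R < n \leq R^2$ the walk is unlikely to have exited $B_R$, so $p_n(R)$ is close to $1$ and $Y_n(R)$ is close to $2^n$ with only $o(1)$ relative fluctuations, and the bound is immediate. Combining both regimes, $E[Y_n(R)^2]/E[Y_n(R)]^2 \leq 1/(2\lambda_R - 1) + o(1) \to 1$ as $R \to \infty$, uniformly in $n > R$.

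Chebyshev's inequality then yields
\[ P\bigl(Y_n(R) < c\,p_n(R)\,2^n\bigr) \;\leq\; \frac{\mathrm{Var}(Y_n(R))}{(1-c)^2\,E[Y_n(R)]^2} \;<\; \varepsilon \]
for all $n > R$, once $R$ is chosen large enough. The main obstacle is the spectral/variance estimate in the second paragraph: justifying the monotonicity $\sup_x \mathbf{P}_x(\sigma_{B_R}>m) \leq p_m(R)$ and controlling $p_n(R)/\lambda_R^n$ by a constant that is well-behaved in $R$, uniformly over the full range $n>R$. The bipartite periodicity of SRW on $\mathbb{Z}^d$ (which places the second-largest eigenvalue of the transition operator at $-\lambda_R$) introduces a further wrinkle that can be handled by separating even- and odd-time asymptotics or via a lazy-walk comparison.
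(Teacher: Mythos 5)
Your overall architecture is the same as the paper's: many-to-one for $E[Y_n(R)]=2^np_n(R)$, a many-to-two decomposition of the second moment over the generation $k$ of the most recent common ancestor, and Chebyshev. Your pair count $2^{2n-k-1}$ and the exact cross-moment formula are correct (indeed stated more carefully than in the paper). The problem is that the quantitative heart of the proof is not carried out, and the route you sketch has two genuine gaps, both of which you flag but neither of which you close. First, the bound $\mathbf{P}_x(\sigma_{B_R}>m)\le p_m(R)$ for every $x\in B_R$ and every \emph{finite} $m$ does not follow from Perron--Frobenius: that theorem gives positivity and uniqueness of the principal Dirichlet eigenfunction, hence only the $m\to\infty$ asymptotics, and even the claim that the discrete eigenfunction on $B_R\cap\mathbb{Z}^d$ is maximized at the origin is not automatic (the lattice ball has no rotational symmetry); no proof or citation is offered. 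Second, even granting that monotonicity, your reduction requires $\sum_{j=0}^{n-1}2^jp_j(R)\le(1+o_R(1))\,2^np_n(R)$ \emph{uniformly in $n>R$}. Your argument covers $n\gg R^2$ (via $p_j\asymp C\lambda_R^j$, which itself needs a quasi-stationarity estimate with controlled constants plus the parity fix you mention) and $n\ll R^2$ (where $p_n\approx 1$), but the regime $n\asymp R^2$, where $p_n$ is bounded away from both $0$ and $1$ and the spectral asymptotics have no controlled prefactor yet, is not covered; ``the bound is immediate'' there is unjustified, since it is precisely a variance bound of the type being proven. The difficulty is structural: after invoking center-maximality your cross term involves the ratio $p_{n-k-1}/p_n$, and an upper bound on this ratio is a genuinely harder (two-sided, quasi-stationary) input than anything needed for the statement.

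The paper's proof shows how to avoid both issues. Organizing the cross term through the coalescence-time law $Q^n(k)=2^{-k-1}/(1-2^{-n})$ of two uniformly sampled particles, the normalized second moment is controlled by $J_n=\sum_k Q^n(k)/p_k$, so the only inputs are the one-sided lower bound $p_k\ge c_3\exp[-\lambda_d k\,r^{-2}(1+o(1))]$ (cited from Lawler--Limic) and the elementary estimate $1-p_{\lfloor r\rfloor}\le c_1e^{-c_2r}$, with the sum split at $k=r$; no center-maximality, no two-sided spectral constants, and no parity discussion are needed, and the resulting bound $J_n-1\le c_4e^{-c_5r}$ holds for all $n>r$ at once. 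If you rework your estimate so that only $1/p_k$ with $k$ the coalescence time appears (i.e., bound the post-split factors by comparing to the unsplit path via the Markov property rather than by $\sup_x\mathbf{P}_x(\sigma_{B_R}>n-k-1)$), your argument closes along the paper's lines; as written, it is incomplete.
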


\begin{proof}
We use a standard second moment argument. Throughout the proof, we suppress the dependence of $Y_n(r)$ and $p_n(r)$ on $r$ for ease of notation. It follows from the many-to-one lemma for the BRW that for each $n\in\mathbb{N}$,
\begin{equation}
E[Y_n] = p_n 2^n . \nonumber
\end{equation}
Fix $0<c<1$. Let $\text{Var}$ denote the variance associated to the law $P$. Then, by Chebyshev's inequality,
\begin{align}
P\left(Y_n < c\, p_n 2^n\right) & = P\left(E[Y_n]-Y_n >E[Y_n] - c\, p_n 2^n\right) \nonumber \\
& \leq P\left(|Y_n-E[Y_n]| > p_n 2^n - c\, p_n 2^n  \right) \nonumber \\
& \leq \frac{\text{Var}(Y_n)}{[p_n 2^n(1-c)]^2} . \label{eq1prop1}
\end{align}
Next, we estimate $\text{Var}(Y_n)$. Let $\mathcal{P}$ be the probability under which the pair of particles $(i,j)$ is chosen uniformly at random among the $2^n(2^n-1)$ possible pairs at time $n$. Denote by $\mathcal{N}_n$ as before the set of particles alive at time $n$, and for each $u\in\mathcal{N}_n$ define the event $A_u=A_u(r,n)=\{X_k(u)\in B_r\:\:\forall\: 0\leq k\leq n\}$. Let $A=A(r,n)=\{X_k\in B_r\: \forall\: 0\leq k\leq n\}$ be defined similarly for a generic random walk $(X_n)_{n\geq 0}$. Then,
\begin{align}
\text{Var}(Y_n) &= \text{Var}\left(\sum_{u\in\mathcal{N}_n} \mathbbm{1}_{A_u}\right) \nonumber \\
&= 2^n \textbf{Var} (\mathbbm{1}_A) + \sum_{1\leq i\neq j\leq 2^n} \text{Cov}\left(\mathbbm{1}_{A_i},\mathbbm{1}_{A_j}\right)  \nonumber \\
&= 2^n(p_n-p_n^2) + 2^n(2^n-1)\frac{\sum_{1\leq i\neq j\leq 2^n}\text{Cov}\left(\mathbbm{1}_{A_i},\mathbbm{1}_{A_j}\right)}{2^n(2^n-1)} \nonumber \\
&= 2^n(p_n-p_n^2) + 2^n(2^n-1)[(\mathcal{E}\otimes P)(A_i\cap A_j)-p_n^2], \label{eq2prop1}
\end{align}
where $\textbf{Var}$ denotes the variance associated to the law $\mathbf{P}_0$, $\text{Cov}$ denotes the covariance associated to the law $P$, and $(\mathcal{E}\otimes P)(A_i\cap A_j)=\mathcal{E}[P(A_i\cap A_j)]$ denotes averaging $P(A_i\cap A_j)$ over the $2^n(2^n-1)$ pairs of particles at time $n$. Let $Q^n$ be the distribution of the time at which the most recent common ancestor of $i$th and $j$th particles at time $n$ was alive under the law $\mathcal{E}\otimes P$, that is, of two particles of BRW which are chosen uniformly randomly at time $n$. By the Markov property applied at this time, we have
\begin{equation}
(\mathcal{E}\otimes P)(A_i\cap A_j) = p_n \sum_{k=0}^{n-1}\, \sum_{x\in B_r\cap\,\mathbb{Z}^d} p_{n-k,x}\,\tilde{p}(k,0,x)\, Q^n(k), \label{eq3prop1}
\end{equation}
where we have defined for each $k\geq 0$ and $x,y\in\mathbb{Z}^d$,
\begin{equation} \nonumber
p_{k,x} := \mathbf{P}_x\left(\sigma_{B_r}>k\right), \quad\quad  \tilde{p}(k,x,y) := \mathbf{P}_x\left(X_k=y \mid X_j\in B_r \:\:\forall\: 0\leq j\leq k\right).
\end{equation}
The Markov property of a random walk applied at time $k$, $0\leq k\leq n$, yields
\begin{equation}
p_n = p_k \sum_{x\in B_r\cap\,\mathbb{Z}^d} p_{n-k,x} \,\tilde{p}(k,0,x) .   \label{eq4prop1}
\end{equation}
Then, it follows from \eqref{eq3prop1} and \eqref{eq4prop1} that 
\begin{equation}
(\mathcal{E}\otimes P)(A_i\cap A_j) = p_n^2 \sum_{k=0}^{n-1} \frac{1}{p_k} Q^n(k).  \label{eq5prop1} 
\end{equation}
Define for $n\geq 1$, 
$$  J_n :=  \sum_{k=0}^{n-1} \frac{1}{p_k} Q^n(k) .$$
Then, it follows from \eqref{eq2prop1} and \eqref{eq5prop1} that for all $n\geq 1$,
\begin{equation}
\text{Var}(Y_n) \leq  2^n p_n+ 2^{2n} p_n^2(J_n-1) . \label{eq6prop1} 
\end{equation}
For $n\geq 2$ and $0< r<n$, split $J_n$ into two pieces as 
$$ J_n^{(1)} :=  \sum_{k=0}^{\lfloor r\rfloor} \frac{1}{p_k} Q^n(k),  \quad\quad J_n^{(2)} :=  \sum_{k=\lfloor r\rfloor+1}^{n-1} \frac{1}{p_k} Q^n(k) .$$
(Set $J_n^{(2)}=0$ if $n-1<\lfloor r\rfloor+1$.) Next, we bound $J_n-1$, which is clearly positive, from above. 

To bound $J_n^{(1)}-1$ from above, observe that $p_k$ is nonincreasing in $k$, which yields $J_n^{(1)}\leq (p_{\lfloor r\rfloor})^{-1}$. From a standard large-deviation estimate on linear displacements for a random walk, see for instance \cite[Proposition 2.1.2]{LL2010}, there exist positive constants $c_1$ and $c_2$ such that for all $r>0$,
\begin{equation}
\mathbf{P}_0\left(\sup_{0\leq k\leq \lfloor r\rfloor}|X_k|>r\right) = 1-p_{\lfloor r\rfloor} \leq c_1 e^{-c_2 r} . \nonumber
\end{equation}
It then follows that for all $r$ large enough and $n>r$,
\begin{equation}
J_n^{(1)}-1 \leq \frac{c_1 e^{-c_2 r}}{1-c_1 e^{-c_2 r}} \leq 2 c_1 e^{-c_2 r}. \label{eq7prop1}  
\end{equation}
To bound $J_n^{(2)}$ from above, we use the explicit form of the distribution $Q^n$ from \cite[Section 4.1]{GH2018}:
\begin{equation}
Q^n(k) = \frac{2^{-k-1}}{1-2^{-n}}, \quad\:\: k=0,1,\ldots,n-1 .  \label{eq8prop1} 
\end{equation}
Let $\lambda_d$ be, as before, the principal Dirichlet eigenvalue of $-\frac{1}{2d}\Delta$ on the unit ball in $\mathbb{R}^d$. By Corollary 6.9.6 and Proposition 8.4.2 in \cite{LL2010}, there exists $c_3>0$ such that
\begin{equation} \label{eq9prop1} 
p_k = \mathbf{P}_0\left(\sigma_{B_r}>k\right)   \geq c_3 \exp\left[-\frac{\lambda_d k}{r^2}(1+o(1))\right], \quad r\to\infty .  
\end{equation}
Then, it follows from \eqref{eq8prop1} and \eqref{eq9prop1} that for all $r$ large enough and $n>r$,
\begin{align}
J_n^{(2)} &\leq \sum_{k=\lfloor r\rfloor+1}^{n-1} c_3^{-1} e^{\frac{2\lambda_d k}{r^2}}2^{-k} \:\:\leq \sum_{k=\lfloor r\rfloor+1}^\infty c_3^{-1} e^{-k\left(\log 2-\frac{2\lambda_d}{r^2} \right)} \nonumber \\
&\leq \sum_{k=\lfloor r\rfloor+1}^\infty c_3^{-1} e^{-k\left(\frac{1}{2}\log 2\right)} = c_3^{-1}\frac{e^{-(\lfloor r\rfloor+1)\frac{1}{2}\log 2}}{1-e^{-\frac{1}{2}\log 2}}, \label{eq10prop1} 
\end{align}
where we have chosen $r$ so large that $\frac{2\lambda_d}{r^2}<\frac{1}{2}\log 2$ in the third inequality. Then, by \eqref{eq7prop1} and \eqref{eq10prop1}, we have
\begin{equation}
J_n-1 \leq  2 c_1 e^{-c_2 r} + 4 c_3^{-1} e^{-\left(\frac{1}{2}\log 2\right)r} .  \label{eq11prop1} 
\end{equation}
Let $c_4=2\max\{2c_1,4c_3^{-1}\}$ and $c_5=\min\{c_2,(\log 2)/2\}$. Then, for all $r$ large enough and $n>r$, it follows from \eqref{eq6prop1} and \eqref{eq11prop1} that 
\begin{equation}
\text{Var}(Y_n) \leq  2^n p_n+ 2^{2n} p_n^2 c_4 e^{-c_5 r}, \nonumber
\end{equation}
which, by \eqref{eq1prop1}, implies that
\begin{equation}
P\left(Y_n < c\, p_n 2^n\right) \leq \frac{2^n p_n+ 2^{2n} p_n^2 c_4 e^{-c_5 r}}{[p_n 2^n(1-c)]^2} = \frac{2^{-n}}{p_n(1-c)^2} + \frac{c_4 e^{-c_5 r}}{(1-c)^2} .  \label{eq12prop1} 
\end{equation}  
It is clear due to \eqref{eq9prop1} that for every $\varepsilon>0$, there exists $r=R$ large enough such that for all $n>R$ the right-hand side of \eqref{eq12prop1} is at most $\varepsilon$. 
\end{proof}

The following result is an extension of Proposition~\ref{prop1}, where the constant radius $r$ is replaced by a time-dependent radius $r(n)$.

\begin{proposition} \label{prop4}
Let $r:\mathbb{N}_+\to\mathbb{R}_+$ be increasing such that $r(n)\to\infty$ as $n\to\infty$ and $r(n)=o(\sqrt{n})$. Also, let $\gamma:\mathbb{N}_+\to\mathbb{R}_+$ be defined by $\gamma_n:=\gamma(n)=e^{-\kappa r(n)}$, where $\kappa>0$ is a constant. For $n\geq 1$, set $B_n=B(0,r(n))$, $p_n=p_n(r(n))=\mathbf{P}_0\left(\sigma_{B_n}>n\right)$, and let $Y_n$ denote the number of particles at time $n$ of a standard BRW whose ancestral lines up to time $n$ do not exit $B_n$. Then, there exists $c>0$ (independent of $\kappa$) such that for all $n$ large enough,
\begin{equation}
P\left(Y_n< \gamma_n p_n 2^n\right) \leq e^{-c r(n)}  .  \nonumber
\end{equation}
\end{proposition}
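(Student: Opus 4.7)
The plan is to reuse the second-moment argument from the proof of Proposition~\ref{prop1}, carefully tracking how its bounds behave when the constant radius $r$ is replaced by the time-dependent $r(n)$. The key point is that the variance estimate \eqref{eq6prop1} and the decomposition of $J_n$ into $J_n^{(1)}, J_n^{(2)}$ use only that $r$ be larger than a fixed constant; they carry over verbatim with $r \rightsquigarrow r(n)$ for all $n$ large enough, since $r(n) \to \infty$.

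Applying Chebyshev's inequality to the threshold $\gamma_n p_n 2^n$, with $E[Y_n] = p_n 2^n$ by the many-to-one lemma, gives
\begin{equation*}
P(Y_n < \gamma_n p_n 2^n) \leq \frac{\text{Var}(Y_n)}{[p_n 2^n(1-\gamma_n)]^2} \leq \frac{2^{-n}/p_n + (J_n - 1)}{(1-\gamma_n)^2}.
\end{equation*}
Since $\gamma_n = e^{-\kappa r(n)} \to 0$, we have $(1-\gamma_n)^2 \geq 1/2$ for all $n$ large. By \eqref{eq11prop1}, $J_n - 1 \leq c_4 e^{-c_5 r(n)}$, where $c_4 > 0$ and $c_5 = \min\{c_2,(\log 2)/2\}$ come from the proof of Proposition~\ref{prop1}; crucially, neither $c_4$ nor $c_5$ depends on $\kappa$. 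For the remaining term $2^{-n}/p_n$, apply \eqref{eq9prop1} with $r = r(n)$ to get
\begin{equation*}
\frac{2^{-n}}{p_n} \leq c_3^{-1}\, \exp\!\left[-n\!\left(\log 2 - \frac{\lambda_d(1+o(1))}{r(n)^2}\right)\right] \leq c_3^{-1}\, e^{-n(\log 2)/2}
\end{equation*}
for all $n$ large, where the second inequality uses $r(n)\to\infty$ to absorb $\lambda_d/r(n)^2$ into $(\log 2)/2$. Since $r(n) = o(\sqrt{n})$ implies $r(n) = o(n)$, this bound is of smaller order than $e^{-c_5 r(n)}$, so combining yields $P(Y_n < \gamma_n p_n 2^n) \leq e^{-c r(n)}$ for any $0 < c < c_5$ and all $n$ large, with $c$ independent of $\kappa$ as required.

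No new probabilistic mechanism beyond the second moment is needed; the main technical care is ensuring (i) that the $(1-\gamma_n)$ factor does not degrade the bound (handled by $\gamma_n \to 0$), and (ii) that the condition $r(n) = o(\sqrt{n})$ makes the $2^{-n}/p_n$ piece negligible relative to $e^{-c r(n)}$. These are the only two places where the time-dependence of $r(n)$ interacts nontrivially with the original proof of Proposition~\ref{prop1}.
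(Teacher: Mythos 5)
Your proposal is correct and follows essentially the same route as the paper: the paper's proof of Proposition~\ref{prop4} likewise reruns the second-moment/Chebyshev argument of Proposition~\ref{prop1} with the radius fixed at $r(n)$ for each $n$ (formally via $p_k^n=\mathbf{P}_0(\sigma_{B_n}>k)$ in the $J_n$-decomposition), obtains $J_n-1\leq c_4e^{-c_5 r(n)}$ with the same $\kappa$-independent constants, uses $\gamma_n\to 0$ to control the $(1-\gamma_n)^{-2}$ factor, and disposes of the $2^{-n}/p_n$ term through the eigenvalue lower bound \eqref{eq5prop4} together with $r(n)\to\infty$ and $r(n)=o(\sqrt{n})$. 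Your two points of ``nontrivial interaction'' are exactly the ones the paper addresses, so no gap remains.
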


\begin{proof}
We keep the notation of the proof of Proposition~\ref{prop1}, and modify the proof where needed due to the replacement of the constants $r$ and $c$ therein, respectively, by $r(n)$ and $\gamma(n)$. Similar to \eqref{eq1prop1}, Chebyshev's inequality now yields
\begin{equation}
P\left(Y_n < \gamma_n p_n 2^n\right) \leq \frac{\text{Var}(Y_n)}{[p_n 2^n(1-\gamma_n)]^2}, \label{eq1prop4}
\end{equation}
whereas \eqref{eq2prop1} holds if we replace $r$ by $r(n)$ in the definition of $A_u$. Define for each $k\geq 0$ and $x,y\in\mathbb{Z}^d$,
\begin{equation} \nonumber
p_{k,x}^n := \mathbf{P}_x\left(\sigma_{B_n}>k\right), \quad\quad  \tilde{p}^{(n)}(k,x,y) := \mathbf{P}_x\left(X_k=y \mid X_j\in B_n \:\:\forall\: 0\leq j\leq k\right).
\end{equation}
Then, similar to \eqref{eq5prop1}, the Markov property of a random walk yields
\begin{equation}
(\mathcal{E}\otimes P)(A_i\cap A_j) = p_n^2 \sum_{k=0}^{n-1} \frac{1}{p_k^n} Q^n(k),  \label{eq2prop4} 
\end{equation}
where we have set $p_k^n = p_{k,0}^n$. Next, for $n$ large enough so that $r(n)<n-1$, define
$$ J_n^{(1)} :=  \sum_{k=0}^{\lfloor r(n)\rfloor} \frac{1}{p_k^n} Q^n(k),  \quad\quad J_n^{(2)} :=  \sum_{k=\lfloor r(n)\rfloor+1}^{n-1} \frac{1}{p_k^n} Q^n(k), \quad\quad J_n:=J_n^{(1)}+J_n^{(2)}.   $$
By the same argument leading to \eqref{eq7prop1}, there exist $c_1>0$ and $c_2>0$ such that \mbox{for all $n$ large enough,}
\begin{equation}
J_n^{(1)}-1 \leq \frac{c_1 e^{-c_2 r(n)}}{1-c_1 e^{-c_2 r(n)}} \leq 2 c_1 e^{-c_2 r(n)}. \label{eq3prop4}  
\end{equation}
By Corollary 6.9.6 and Proposition 8.4.2 in \cite{LL2010}, there exists $c_3>0$ such that
\begin{equation} \label{eq5prop4}
p_k = \mathbf{P}_0\left(\sigma_{B_n}>k\right)   \geq c_3 \exp\left[-\frac{\lambda_d k}{(r(n))^2}(1+o(1))\right], \quad n\to\infty .  
\end{equation}
Then, since $r(n)\to\infty$ as $n\to\infty$ by assumption, by the same argument leading to \eqref{eq10prop1}, for all $n$ large enough,
\begin{equation} \label{eq4prop4} 
J_n^{(2)} \leq c_3^{-1}\frac{e^{-(\lfloor r(n)\rfloor+1)\frac{1}{2}\log 2}}{1-e^{-\frac{1}{2}\log 2}}.
\end{equation}
Letting $c_4=2\max\{2c_1,4c_3^{-1}\}$ and $c_5=\min\{c_2,(\log 2)/2\}$ as before, it follows from \eqref{eq2prop1} and \eqref{eq2prop4}-\eqref{eq4prop4} that for all large $n$,
\begin{equation}
\text{Var}(Y_n) \leq  2^n p_n+ 2^{2n} p_n^2 c_4 e^{-c_5 r(n)}.  \nonumber
\end{equation}
Then, by \eqref{eq1prop4}, for all large $n$,
\begin{equation} \nonumber
P\left(Y_n < \gamma_n p_n 2^n\right) \leq \frac{2^{-n}}{p_n(1-\gamma_n)^2} + \frac{c_4 e^{-c_5 r(n)}}{(1-\gamma_n)^2} .\end{equation} 
To complete the proof, observe that $(p_n)^{-1}\to \infty$ at most subexponentially fast in $n$ due to \eqref{eq5prop4} since $r(n)\to\infty$ as $n\to\infty$ by assumption, and that $(1-\gamma_n)^{-2}\to 1$ as $n\to\infty$ since $\gamma_n\to 0$ as $n\to\infty$ by assumption. Moreover, $c_5=\min\{c_2,(\log 2)/2\}$ has no dependence on $\kappa$.
\end{proof}

The term `overwhelming probability' is henceforth used with a precise meaning, which is given as follows.
\begin{definition}[Overwhelming probability]
Let $(A_n)_{n>0}$ be a family of events indexed by time $n$, and $\mathcal{P}$ be the relevant probability. We say that $A_n$ occurs \emph{with overwhelming probability} if 
$$\underset{n\to\infty}{\lim}\mathcal{P}(A_n^c)=0 . $$ 
\end{definition}

\section{Almost sure environment}\label{section4}

In this section, we will establish a \emph{quenched environment} in which the SLLN will hold. That is, we will construct a set of environments $\Omega_0$ such that $\mathbb{P}(\Omega_0\cap\{\mathbf{0}\in\mathcal{C}\})/\mathbb{P}(\mathbf{0}\in\mathcal{C})=1$. This will be achieved in four steps. The first step chooses environments in which the vacant sites have suitable percolation properties. The second step chooses environments in which the BRW has a positive probability of ultimate survival. In the last two steps, environments with sufficiently large and sufficiently many clearings close enough to the origin are chosen: we show that $\mathbb{P}$-a.s., for all large $n$, there is at least one `huge' accesible clearing in $[-An,An]^d$ and many `large' accessible clearings (see Definitions~\ref{def1}-\ref{huge}) that are homogeneously spread in $[-An,An]^d$ at time $n$, where $A>0$ is a suitable constant. We assume that $d\geq 2$ in the remainder of this work.

%(see Figure~\ref{figure1}). We will use both \cite[Lemma 3.3]{S1993} and a variation of \cite[Lemma 3.3]{S1993} which we will state and prove. To this end, we first set forth some relevant notation and background, taken from \cite{S1993}. 

\subsection{Site percolation and Bernoulli traps} \label{sitepercol}

We review some standard results about the site percolation model on the integer lattice $\mathbb{Z}^d$, which may also be viewed in the framework of Bernoulli traps on $\mathbb{Z}^d$. Let $\{\xi(x):x\in\mathbb{Z}^d\}$ be a field of i.i.d.\ random variables with corresponding probability $\mathcal{P}$ and expectation $\mathcal{E}$, and assume that each $\xi(x)$ has the distribution 
\begin{equation}
\mathcal{P}(\xi(x)=0)=p, \quad \:\: \mathcal{P}(\xi(x)=1)=1-p, \quad \:\: p\in[0,1]. \nonumber  
\end{equation}
We call a site $x$ \emph{vacant} if $\xi(x)=0$ and \emph{occupied} otherwise. For $x\in\mathbb{Z}^d$, write $x=(x_1,\ldots,x_d)$ to denote its components. For vacant sites, let two sites $x$ and $y$ in $\mathbb{Z}^d$ be adjacent if they are nearest neighbors, that is, if $\sum_{i=1}^d|x_i-y_i|=1$; whereas occupied sites $x$ and $y$ are considered adjacent if $1\leq\sum_{i=1}^d|x_i-y_i|\leq 2d$. For each $x\in\mathbb{Z}^d$, let $W(x)$ be the connected component of the site $x$ among occupied sites, which we call the \emph{occupied cluster of $x$}. That is, $W(x)$ is the set of sites which can be reached from $x$ by a path which only passes through occupied sites. If $x$ is vacant, then we set $W(x)=\emptyset$. It is known from percolation theory that when $d\geq 2$ there exists $0<p_d<1$ such that for $p>p_d$ the following holds: 
\begin{enumerate}[label=(A\arabic*)]
    \item\label{a1} Almost surely, there is a unique infinite component $\mathcal{C}$ of vacant sites.    
    \item\label{a2} Almost surely, the sets $W(x)$ are all finite and there exists a positive constant $h$ such that $\mathcal{E}[\exp\{h|W(x)|\}]<\infty$ for all $x\in\mathbb{Z}^d$. 
\end{enumerate}
The results \ref{a1} and \ref{a2} can be found, respecitvely, in Aizenman et al.\ \cite[Proposition 5.3]{AKN1987} and  Kesten \cite[Theorem 5.1]{K1982}. For $x,y\in\mathcal{C}$, denote by $d_{\mathcal{C}}(x,y)$ the distance between $x$ and $y$ in $\mathcal{C}$, that is, the minimal length of a nearest neighbor path joining $x$ and $y$ in $\mathcal{C}$. Define the norm $||\cdot||$ by $||x||=\sum_{i=1}^d|x_i|$ on $\mathbb{Z}^d$. The following result was proved in G\"artner and Molchanov \cite{GM1990}, and is critical for the proof of Theorem~\ref{thm1}.  
\begin{thma}[Distance estimate inside the infinite cluster; Lemma 2.8, \cite{GM1990}]
Assume that \ref{a1} and \ref{a2} hold. Then, there exists $\psi>1$ such that almost surely 
\begin{equation} \label{distanceestimate}
\underset{|y|\to\infty,\:y\in\mathcal{C}}{\limsup}\,\frac{d_{\mathcal{C}}(x,y)}{||x-y||}\leq \psi \quad \:\: \text{for all $x\in\mathcal{C}$}. 
\end{equation}
\end{thma}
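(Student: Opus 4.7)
The plan is to prove the distance estimate by a standard renormalization argument combined with a Peierls-type tail bound, from which the a.s.\ limsup statement follows by Borel--Cantelli. By translation-invariance and the fact that $d_{\mathcal{C}}$ is symmetric, it suffices to prove that $\mathbb{P}$-a.s., for any fixed $x_0\in\mathcal{C}$ (say by shifting, $x_0=\mathbf 0$),
$$ \limsup_{|y|\to\infty,\ y\in\mathcal{C}} \frac{d_{\mathcal{C}}(x_0,y)}{\lVert x_0-y\rVert}\le\psi .$$
Once such a constant $\psi$ is found uniformly in a single reference point, a countability argument (union bound over $x_0$ in a countable dense subset of $\mathcal{C}$) and the fact that $x_0$ is at bounded graph-distance from such a reference site in $\mathcal{C}$ upgrade the statement to "for all $x\in\mathcal{C}$".

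The workhorse is a block/renormalization scheme. Tile $\mathbb{Z}^d$ by cubes $B_L(\mathbf k)=\mathbf k L+[0,L)^d$, $\mathbf k\in\mathbb{Z}^d$, of side length $L$ to be fixed later. Call $B_L(\mathbf k)$ \emph{good} if (i) the vacant sites in the concentric cube of side $3L$ contain a single connected ``crossing'' component which touches every face of $B_L(\mathbf k)$, and (ii) this crossing cluster is joined within the slab between $B_L(\mathbf k)$ and $B_L(\mathbf k')$ to the corresponding crossing cluster of each nearest-neighbor good block $B_L(\mathbf k')$. Standard supercritical site-percolation estimates (Grimmett--Marstrand-type renormalization; see e.g.\ Antal--Pisztora and the references in \cite{AKN1987}, \cite{K1982}) show that for any $\eta>0$ we can choose $L=L(\eta,p)$ so large that $\mathbb{P}(B_L(\mathbf k)\text{ is good})\ge 1-\eta$. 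Moreover, goodness depends only on sites in a finite neighborhood of $B_L(\mathbf k)$, hence the random field of good blocks is finite-range dependent. By the Liggett--Schonmann--Stacey domination theorem it stochastically dominates a supercritical Bernoulli site percolation on the coarse-grained lattice $\mathbb{Z}^d$, with density arbitrarily close to $1$.

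Given this, the chemical distance in the good-block graph is controlled by a Peierls contour argument: a path of length $N$ in $\mathbb{Z}^d$ visiting only bad blocks has probability at most $c^N$ with $c<1$ (taking $\eta$ small). Hence for any $\mathbf k\in\mathbb{Z}^d$ at coarse-grained distance $n$ from $\mathbf 0$, with probability at least $1-e^{-c' n}$ there is a sequence of at most $C n$ adjacent good blocks joining the block of $\mathbf 0$ to the block of $\mathbf k$. Inside and between consecutive good blocks, the crossing clusters are connected by paths of length $O(L)$ inside $\mathcal{C}$. Concatenating gives $d_{\mathcal{C}}(\mathbf 0,\mathbf k L)\le C' L n$ with overwhelming probability. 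Summing over the polynomially many lattice points $y$ with $\lVert y\rVert\in[2^m,2^{m+1})$ and applying Borel--Cantelli to the events $\{d_{\mathcal{C}}(\mathbf 0,y)>\psi\lVert y\rVert\}$ (with $\psi$ chosen larger than $C'L/L=C'$) yields the claimed a.s.\ $\limsup$ bound, restricted to $y\in\mathcal{C}$ via the fact that a vertex in a good block at coarse-grained distance $n$ from $\mathbf 0$ is automatically in $\mathcal{C}$ once $\mathbf 0$ is.

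The main obstacle is the first step: proving the renormalization statement that good blocks dominate a supercritical product measure with density close to $1$. This requires the nontrivial supercritical regularity of Bernoulli percolation---uniqueness and ``density'' of the infinite cluster inside large cubes, plus the fact that the local crossing clusters in adjacent boxes are almost surely the ones lying in $\mathcal{C}$. This is where the hypotheses \ref{a1}--\ref{a2} enter essentially; once the renormalized field is supercritical with high density, the remaining Peierls plus Borel--Cantelli steps are routine.
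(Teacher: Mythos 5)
First, a point of comparison: the paper does not prove this statement at all --- Theorem A is imported verbatim from G\"artner and Molchanov \cite{GM1990} (their Lemma 2.8), so there is no internal argument to measure you against. Your proposal is therefore a genuinely different route: it is essentially the Antal--Pisztora renormalization/coarse-graining proof of the chemical-distance bound for supercritical percolation, whereas the argument behind \cite{GM1990} exploits the hypotheses (A1)--(A2) directly: one follows an essentially deterministic nearest-neighbor path from $x$ to $y$ and detours around each finite occupied cluster $W(z)$ that blocks it, the total length of the detours being controlled by a large-deviation estimate for sums of the cluster sizes along the path, which is exactly what the exponential moment bound in (A2) provides; Borel--Cantelli then gives the a.s.\ statement with a deterministic $\psi$.

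As written, your sketch has a genuine gap in the last step. Your goodness event (crossing cluster touching all faces, glued to the neighbors' crossing clusters) only controls the chemical distance between the \emph{backbones} of the good blocks; it does not attach the actual endpoints $x,y\in\mathcal{C}$ to that backbone. The claim that ``a vertex in a good block at coarse-grained distance $n$ from $\mathbf 0$ is automatically in $\mathcal{C}$ once $\mathbf 0$ is'' is false in both directions: a site in a good block need not even be vacant, and a site $y\in\mathcal{C}$ sitting in a good block need not lie in that block's crossing cluster, so the concatenation argument does not yet bound $d_{\mathcal{C}}(x,y)$. The standard repair is to strengthen goodness in the Penrose--Pisztora/Antal--Pisztora way (the crossing cluster is the unique vacant cluster of diameter at least $L/10$ in the enlarged box, and every vacant cluster of that diameter in the box is connected to it inside the enlarged box); since $y\in\mathcal{C}$ is connected to infinity, its cluster has large diameter locally and therefore attaches to the backbone within distance $O(L)$, and similarly for $x$. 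Separately, the input ``$\mathbb{P}(\text{good})\to 1$ as $L\to\infty$'' for site percolation is itself a substantive theorem (Grimmett--Marstrand-type slab estimates and coarse graining for $d\ge 3$); it is not a consequence of (A1)--(A2) as stated, so your proof requires strictly stronger external inputs than the hypotheses of Theorem A, which is precisely what the more elementary detour argument of \cite{GM1990} avoids. With the strengthened block event and those percolation inputs granted, the LSS domination, Peierls bound, and Borel--Cantelli steps (the union over the countably many $x\in\mathcal{C}\subseteq\mathbb{Z}^d$ needs no density argument) are routine and would give the claim.
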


Recall the space $\Omega$ and the law $\mathbb{P}$ from Section~\ref{section1.1}. By setting $\mathcal{P}=\mathbb{P}$, it is clear that all of the results above apply to the setting of Bernoulli traps on $\mathbb{Z}^d$.

\begin{assumption} \label{assumption1}
For the rest of the manuscript, we assume that $p>p_d$ so that $\mathbb{P}(\Omega_1)=1$, where 
\begin{equation}
\Omega_1:=\{\omega\in\Omega:\omega\:\:\text{fulfills the conditions in \emph{(A1)}, \emph{(A2)} and \eqref{distanceestimate}}\}. \label{eqomega1}   
\end{equation}  
\end{assumption}

\subsection{Survival probability} \label{survivalprob}

We first give a definition for certain random subsets of $\mathbb{Z}^d$ in the environment $\omega$. Then, we prove Theorem~\ref{thm2}. Recall from \eqref{survival} the definitions of the event of survival up to $n$ and the event of ultimate survival. 

\begin{definition}[Clearings] \label{def1}
A set of sites $A\subseteq\mathbb{Z}^d$ is called a \emph{clearing} in the random environment $\omega$ if $A\subseteq K^c$. A \emph{clearing of radius $r$} refers to the set of sites in a Euclidean ball of radius $r$ in $\mathbb{R}^d$ if all the sites in this ball are vacant. A clearing is called \emph{accessible} if all the sites within are part of the infinite trap-free component $\mathcal{C}=\mathcal{C}(\omega)$. 
\end{definition}

\begin{proof}[Proof of Theorem~\ref{thm2}]
Since $\lim_{n\to\infty}P^\omega(S_n)=P^\omega(S)$, for the lower bound it suffices to prove that $\mathbb{P}$-a.s.\ on the set $\{\mathbf{0}\in\mathcal{C}\}$, there exists a constant $c_1=c_1(\omega)$ such that $P^\omega(S_n)\geq c_1>0$ for all large $n$. 

Recall $R_0$ from \eqref{eqro}, let for $\ell>e$,
$$ R_\ell := R_0(\log\ell)^{1/d}-1-\sqrt{d},  \quad \quad C_\ell := \left[-2R_0\ell(\log\ell)^{1/d},2R_0\ell(\log\ell)^{1/d}\right]^d, $$
and define 
\begin{equation} \nonumber
\widetilde{\Omega} = \{\omega\in\Omega: \exists\,\ell_0=\ell_0(\omega),\: \forall\,\ell\geq\ell_0,\: C_\ell \:\text{contains an accessible clearing of radius $R_\ell$} \}  .
\end{equation}
It follows from \cite[Lemma 3.3]{S1993} that $\mathbb{P}(\widetilde{\Omega})=1$ (see also Section~\ref{asclearings}). Moreover, by Proposition~\ref{prop1}, there exists $R_{c}>0$ large enough such that for all $n>R_c$ the probability that at least one ancestral line of a free BRW has not left $B(0,R_c)$ up to time $n$ is at least $1/2$. 

Let $\omega\in\widetilde{\Omega}\cap\Omega_1\cap\{\mathbf{0}\in\mathcal{C}\}$ and choose $R=R(\omega)$ so that 
$$ R>R_c \quad \text{and} \quad e^{(2R/R_0)^d}>\ell_0  .$$
Then, in the random environment $\omega$, by definition of $R_\ell$ and $\widetilde{\Omega}$, the box 
$$C(\omega,d):=\left[-4R e^{(2R/R_0)^d}, 4R e^{(2R/R_0)^d}\right]^d$$
contains an accessible clearing of radius $R$. Let $B(x_0,R)$ be this accessible clearing. We may take $x_0$ to be in $\mathbb{Z}^d$. Recall that for $x,y\in\mathcal{C}$, $d_\mathcal{C}(x,y)$ denotes the minimal length of a nearest neighbor path joining $x$ and $y$ such that the entire path is in $\mathcal{C}$. Define the number
$$ N :=  \bigg\lceil 8\psi d R e^{(2R/R_0)^d}  \bigg\rceil  ,$$ 
where $\psi$ is as in \eqref{distanceestimate}. Let $n$ be so large that $n>R+N$ and consider the following survival strategy for the BRW over the interval $[0,n]$: an ancestral line (starting at the origin) follows the shortest nearest neighbor path between $\mathbf{0}$ and $x_0$ within $\mathcal{C}$, arrives at $x_0$ at time $d_{\mathcal{C}}(\mathbf{0},x_0)$, and then the sub-BRW initiated at this space-time point has at least one ancestral line segment that never leaves $B(x_0,R)$ over the interval $[d_{\mathcal{C}}(\mathbf{0},x_0),n]$. Since $\omega\in\Omega_1$, by \eqref{distanceestimate} and due to $||x_0||\leq 4 d R e^{(2R/R_0)^d}$, the probability of the partial strategy over $[0,d_{\mathcal{C}}(\mathbf{0},x_0)]$ is at least $(2d)^{-N}$. On the other hand, since $n>R+N$, we have $n-d_{\mathcal{C}}(\mathbf{0},x_0)>R$. This, along with the choice $R>R_c$ and since $B(x_0,R)$ is a clearing, implies due to Proposition~\ref{prop1} that the probability of the partial strategy over $[d_{\mathcal{C}}(\mathbf{0},x_0),n]$ is at least $1/2$. Then, by the Markov property applied at time $d_{\mathcal{C}}(\mathbf{0},x_0)$, we conclude that $P^\omega(S_n)\geq (1/2)(2d)^{-N}$ for all large $n$. 

We now show that $P^\omega(S)<1$ for a.e.-$\omega$. Let $NN(d,k)$ be the set of $k$-nearest neighbors of the origin in $d$ dimensions, where $x$ is defined to be a $k$-nearest neighbor of the origin if $||x||=\sum_{i=1}^d|x_i|=k$. Clearly, $|NN(d,k)|\geq 1$ for any $d\geq 1$ and $k\geq 1$. (The precise formula is also known: $|NN(d,k)|=\sum_{j=1}^{k\wedge d}\binom{d}{j}\binom{k-1}{j-1}2^j$.) For $n\geq 1$, define the events
$$ A_n=\{\text{all}\:\:x\in NN(d,k)\:\:\text{are vacant}\:\:\text{for all}\:\:k\leq n\} , $$
and the event
\begin{equation} \label{eqnearestneighbor}
\Omega_0 = \{\omega\in\Omega: \exists\,n_0=n_0(\omega)\:\:\text{such that}\:\:\exists\:x\in NN(d,k)\cap K(\omega)\:\:\text{for some $k\leq n_0$}\} .
\end{equation}
It follows from $\sum_{k=1}^n|NN(d,k)|\geq n$ that $\sum_{n=1}^\infty \mathbb{P}(A_n)\leq \sum_{n=1}^\infty p^n<\infty$, which in turn implies by the Borel-Cantelli lemma that $\mathbb{P}(\Omega_0)=1$. Now let $\omega\in\Omega_0$ and $n_0$ be as in \eqref{eqnearestneighbor}. In the environment $\omega$, let $y$ be a closest occupied site to the origin, $\pi(\mathbf{0},y)$ be the corresponding path of sites connecting the origin to $y$, and $k$ be the length of this path. Since $\omega\in\Omega_0$, we know that $k\leq n_0$. Then, by considering the event that each particle, starting with the initial particle, jumps along $\pi(\mathbf{0},y)$ towards $y$ right after birth, one sees that 
$$ P^\omega(S^c) \geq \left(\frac{1}{2d}\right)^{1+2+4\ldots+2^{k-1}} \geq \left(\frac{1}{2d}\right)^{2^{n_0}} > 0 .$$
This completes the proof of the upper bound of Theorem~\ref{thm2}. 
\end{proof}

For the rest of the manuscript, let $\Omega_2\subseteq\Omega$ be defined as
\begin{equation} \label{eqomega2} 
\Omega_2:=\{\omega\in\Omega:\, P^\omega(S)>0\}.   
\end{equation}

\subsection{Almost sure clearings} \label{asclearings}

We start by introducing some terminology that will be used throughout the proof. For two functions $f,g:\mathbb{N}_+\to\mathbb{R}_+$, we use $f(n)\asymp g(n)$ to mean $f(n)/g(n)\to 1$ as $n\to\infty$. Recall the definitions of $R_0$ and a \emph{clearing} from, respectively, \eqref{eqro} and Definition~\ref{def1}. 

\begin{definition}[Large clearing]\label{largeclearing}
A clearing of radius $R=R(n)$, where $R(n)\asymp R_1(\log n)^{1/d}$ and $0<R_1<R_0$, is called a \emph{large clearing in time $n$}. 
\end{definition}

\begin{definition}[Huge clearing]\label{huge}
A clearing of radius $R=R(n)$, where $R(n)\asymp R_0(\log n)^{1/d}$, is called a \emph{huge clearing in time $n$}.
\end{definition} 

For a suitably increasing function $\rho:\mathbb{N}_+\to\mathbb{R}_+$ with $\lim_{n\to\infty}\rho(n)=\infty$ and any $R_1$ with $0<R_1<R_0$, we will show that in a.e.-environment there exists $n_0=n_0(\omega)$ such that for every $n\geq n_0$, any cube of side length $\rho(n)$ within $[-An,An]^d$ contains a clearing of radius $\asymp R_1(\log\rho(n))^{1/d}$ and the cube $[-An,An]^d$ contains at least one clearing of radius $\asymp R_0(\log n)^{1/d}$ (see Figure~\ref{figure1}). We will use both \cite[Lemma 3.3]{S1993} and a variation of \cite[Lemma 3.3]{S1993} which we will state and prove. To this end, we first set forth some relevant notation and background, taken from \cite{S1993}.

For $x\in\mathbb{Z}^d$, introduce the cube in $\mathbb{R}^d$ of side length $1$ centered at $x$ as
$$ C(x) = x + \bigg[-\frac{1}{2},\frac{1}{2}\bigg)   .  $$  
Then, for $n\geq 1$, define the cube
\begin{equation} \label{eqtn}
\mathcal{T}_n = \left[-2R_0(\log 2^n)^{1/d}2^n,2R_0(\log 2^n)^{1/d}2^n\right]^d  , 
\end{equation}
and for $k\in[-2^n,2^n-1]^d \cap \mathbb{Z}^d$, define the smaller cubes within $\mathcal{T}_n$ as
$$ C_{n,k} = \left\{z\in\mathbb{R}^d : k_i2R_0(\log 2^n)^{1/d}<z_i<(k_i+1)2R_0(\log 2^n)^{1/d} \right\}. $$
For $n\geq 1$, let 
$$ R_n:=R_0(\log 2^n)^{1/d}-1  .$$
For $n\geq 1$ large enough so that $R_n>0$ and $k\in[-2^n,2^n-1]^d \cap \mathbb{Z}^d$, denote by $x(n,k)$ the unique site in $\mathbb{Z}^d$ such that $C(x(n,k))$ contains the center of $C_{n,k}$, and define the collection of sites 
\begin{equation} \label{eqsnm}
S_{n,k} = \{x\in\mathbb{Z}^d : C(x)\subseteq \bar{B}(x(n,k),R_n)\} 
\end{equation}
and the events
\begin{equation} \label{eqbnm}
\mathcal{B}_{n,k} = \{\omega\in\Omega: S_{n,k} \subseteq \mathcal{C}(\omega)\}. 
\end{equation}
Recall that $\mathcal{C}=\mathcal{C}(\omega)$ stands for the unique infinite component of vacant sites. Observe that $S_{n,k}$ is completely contained in $C_{n,k}$ and that
\begin{equation}
\text{vol}(B(0,R_n-\sqrt{d})) \leq |S_{n,k}| \leq \text{vol}(B(0,R_n)),
\end{equation}
where $\text{vol}(A)$ denotes the volume of a set $A\subseteq\mathbb{R}^d$.

\begin{lemmaa}[A.s.-existence of a huge clearing; \cite{S1993}]
$\mathbb{P}$-a.s., for all large $n$, $\mathcal{B}_{n,k}$ occurs for some $k$ in $[-2^n,2^n-1]^d\cap \mathbb{Z}^d$.
\end{lemmaa}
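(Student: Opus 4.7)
The strategy is a two-step Borel--Cantelli argument: first produce vacant balls of radius $R_n$ throughout $\mathcal{T}_n$ via the product structure of $\mathbb{P}$, then upgrade at least one such ball to lie inside $\mathcal{C}$ using the percolation inputs \ref{a1}--\ref{a2}.

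For Step~1, introduce the weaker local event $\mathcal{A}_{n,k}:=\{S_{n,k}\subseteq K^c(\omega)\}$, so that $\mathcal{B}_{n,k}\subseteq\mathcal{A}_{n,k}$. Since the cubes $C_{n,k}$ are pairwise disjoint and $S_{n,k}\subseteq C_{n,k}$, the family $(\mathcal{A}_{n,k})_{k}$ is independent for each fixed $n$, with $\mathbb{P}(\mathcal{A}_{n,k})=p^{|S_{n,k}|}$. Using $|S_{n,k}|\leq\omega_d R_n^d$, a binomial expansion of $R_n^d=(R_0(n\log 2)^{1/d}-1)^d$, and the calibration $\omega_d R_0^d=d/\log(1/p)$ built into \eqref{eqro}, one gets $\omega_d R_n^d=(d\log 2/\log(1/p))\,n-\Theta(n^{1-1/d})$, whence $\mathbb{P}(\mathcal{A}_{n,k})\geq 2^{-dn}\exp(c_1 n^{1-1/d})$ for some $c_1>0$. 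Since there are $(2\cdot 2^n)^d=2^d\cdot 2^{dn}$ choices of $k$, independence and $(1-x)^N\leq e^{-Nx}$ yield
\begin{equation*}
\mathbb{P}\Bigl(\bigcap_{k}\mathcal{A}_{n,k}^c\Bigr)\leq\exp\bigl(-2^d\,e^{c_1 n^{1-1/d}}\bigr),
\end{equation*}
which is summable in $n$; a first Borel--Cantelli then gives that $\mathbb{P}$-a.s., for all large $n$, some $\mathcal{A}_{n,k}$ occurs.

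For Step~2, observe that on $\mathcal{A}_{n,k}$ the nearest-neighbor connected set $S_{n,k}$ lies in a single vacant cluster $\mathcal{V}_{n,k}$, and by uniqueness \ref{a1}, $\mathcal{B}_{n,k}$ fails precisely when $\mathcal{V}_{n,k}$ is finite. A Peierls-type duality argument with the $\ast$-adjacency convention of Section~\ref{sitepercol} then produces a $\ast$-connected occupied surface surrounding $\mathcal{V}_{n,k}\supseteq S_{n,k}$ of size at least $cR_n^{d-1}$ (by the isoperimetric inequality in $\mathbb{Z}^d$); such a surface is contained in a single occupied cluster $W(x)$, and \ref{a2} bounds $\mathbb{P}(|W(x)|\geq cR_n^{d-1})\leq Ce^{-hcR_n^{d-1}}$ uniformly in $x$. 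A union bound over the polynomially-many candidate anchor sites near $S_{n,k}$ and over $k$, combined with
\begin{equation*}
\mathbb{P}\Bigl(\bigcap_k \mathcal{B}_{n,k}^c\Bigr)\leq \mathbb{P}\Bigl(\bigcap_k \mathcal{A}_{n,k}^c\Bigr)+\sum_k\mathbb{P}(\mathcal{A}_{n,k}\setminus\mathcal{B}_{n,k})
\end{equation*}
and a second Borel--Cantelli step, then closes the argument.

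The delicate part is Step~2, since $\{S_{n,k}\subseteq\mathcal{C}\}$ is a non-local event and the product-measure independence driving Step~1 is no longer available. The combinatorics is tight: both the total volume growth $2^{d(n+1)}\mathbb{P}(\mathcal{A}_{n,k})$ and the Peierls surface decay $\exp(-chR_n^{d-1})$ are of stretched-exponential order $\exp(\Theta(n^{1-1/d}))$. The value of $R_0$ in \eqref{eqro} is calibrated precisely so that the leading-order vacancy cost $p^{\omega_d R_0^d n\log 2}$ equals $2^{-dn}$, which cancels the $2^{dn}$ growth in the number of boxes and leaves only the stretched-exponential corrections to balance; any larger radius would destroy Step~1. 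If the raw constant $h$ in \ref{a2} is not large enough for the surface decay to dominate outright, the standard remedy is a preliminary renormalization to a coarse sublattice whose sites represent large blocks, which makes the effective $h$ arbitrarily large without changing the asymptotic scales $R_n$ and $\mathcal{T}_n$.
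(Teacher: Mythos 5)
Your Step 1 is fine, but Step 2 has a genuine gap, and the proposed remedy cannot repair it. The decomposition $\mathbb{P}\bigl(\bigcap_k\mathcal{B}_{n,k}^c\bigr)\le\mathbb{P}\bigl(\bigcap_k\mathcal{A}_{n,k}^c\bigr)+\sum_k\mathbb{P}(\mathcal{A}_{n,k}\setminus\mathcal{B}_{n,k})$ forces you to multiply the ``vacant but not in $\mathcal{C}$'' probability by the number $2^{d(n+1)}$ of boxes, and this product is not merely hard to control: for part of the admissible range of $(d,p)$ it diverges. Indeed, conditionally on $\mathcal{A}_{n,k}$, one explicit way to realize $S_{n,k}\not\subseteq\mathcal{C}$ is to occupy every site of the annulus $\{x: R_n<|x-x(n,k)|\le R_n+1\}$, which is disjoint from $S_{n,k}$ and has $(1+o(1))\,d\,\omega_d R_n^{d-1}$ sites; hence $\mathbb{P}(\mathcal{A}_{n,k}\setminus\mathcal{B}_{n,k})\ge p^{\omega_d R_n^d}(1-p)^{(1+o(1))\,d\,\omega_d R_n^{d-1}}$. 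Using the calibration $\omega_d R_0^d=d/\log(1/p)$ from \eqref{eqro} and $R_n=R_0(n\log 2)^{1/d}-1$, the sum over $k$ is then at least $\exp\bigl[\bigl(\log\tfrac{1}{p}-\log\tfrac{1}{1-p}\bigr)\,d\,\omega_d R_0^{d-1}(n\log 2)^{1-1/d}(1+o(1))\bigr]$, which tends to infinity whenever $p<1/2$; since $p_d<1/2$ for $d\ge 3$, your bound genuinely blows up for $p\in(p_d,1/2)$. In other words, the very stretched-exponential surplus that makes Step 1 work (a huge expected number of vacant balls) must be beaten in Step 2 by the disconnection discount, and that discount is capped by the true surface cost, roughly $\log\tfrac{1}{1-p}$ per boundary site. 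This is also why the renormalization fix fails: coarse-graining can improve a Peierls constant only up to the true decay rate of the event being bounded, and the event $\mathcal{A}_{n,k}\setminus\mathcal{B}_{n,k}$ really is of surface order with a bounded constant, so no choice of blocks makes ``the effective $h$ arbitrarily large''. Even in $d=2$ (where $p_2>1/2$) your argument is not closed, since it needs a quantitative comparison between $h$ times an unspecified surface constant and the sub-leading vacancy term, which \ref{a2} alone does not provide. (A smaller, fixable issue: the occupied surface surrounding the finite vacant cluster containing $S_{n,k}$ need not lie near $S_{n,k}$, so the anchors are not confined to a polynomial neighborhood; one needs the standard ray-plus-cardinality union bound.)

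The paper avoids all of this by following Sznitman's Lemma 3.3 (the same mechanism is quoted in the proof of Lemma~\ref{lemma1}): a second-moment argument applied directly to the count $\sum_k\mathbbm{1}_{\mathcal{B}_{n,k}}$. There one only needs the lower bound $b_n=\mathbb{P}(\mathcal{B}_{n,k})\ge\frac{1}{2}\exp\bigl[\omega_d(\cdot)^d\ln p\bigr]$, i.e.\ that given vacancy the conditional probability of being wired to $\mathcal{C}$ is at least $1/2$ (no decay rate for disconnection is required), so the mean $2^{(n+1)d}b_n$ diverges stretched-exponentially; Chebyshev together with a covariance estimate across boxes, as in \eqref{eq401}, bounds $\mathbb{P}\bigl(\sum_k\mathbbm{1}_{\mathcal{B}_{n,k}}=0\bigr)$ by a summable sequence, and Borel--Cantelli concludes. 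The essential point you are missing is that in this argument the disconnection probability is never multiplied by the number of boxes; to fix your proof you would have to replace the union bound of Step 2 by such a second-moment argument (or a carefully controlled conditioning on the first vacant box, which reintroduces nontrivial selection-bias issues).
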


(The proof of Lemma A is identical to that of \cite[Lemma 3.3]{S1993} with the replacement of the Poisson intensity $\nu$ therein by $-\ln p$.) Observe that Lemma A implies in particular that $\mathbb{P}$-a.s., $\mathcal{T}_n$ contains an accessible clearing of radius $R_n-\sqrt{d}$ for all large $n$. Next, we state and prove a variation of Lemma A.

\begin{lemma}[A.s.-existence of many large clearings] \label{lemma1}
For $n\geq 1$, let $x_1,\ldots,x_{n^3}$ be any set of $n^3$ points in $\mathbb{Z}^d$, and for $1\leq j\leq n^3$, define the cubes
$$ \mathcal{T}_n^j := x_j+\mathcal{T}_n = x_j+\left[-2R_0(\log 2^n)^{1/d}2^n,2R_0(\log 2^n)^{1/d}2^n\right]^d  . $$
Then, $\mathbb{P}$-a.s., for all large $n$, each $\mathcal{T}_n^j$ for $1\leq j\leq n^3$ contains an accessible clearing of radius 
\begin{equation} \label{eqrn2}
r_n := (2R_0/3)(\log 2^n)^{1/d} . 
\end{equation}
\end{lemma}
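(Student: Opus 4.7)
My plan is to reduce the statement to a single-cube estimate and then invoke a union bound followed by Borel--Cantelli. By the translation invariance of the Bernoulli trap law $\mathbb{P}$, every translated cube $\mathcal{T}_n^j$ fails to contain an accessible clearing of radius $r_n$ with the same probability $q_n:=\mathbb{P}(\mathcal{T}_n\text{ has no accessible clearing of radius }r_n)$. A union bound then gives $\mathbb{P}(\text{some }\mathcal{T}_n^j\text{ fails})\leq n^3 q_n$, so it suffices to prove $\sum_n n^3 q_n<\infty$; in fact I will show that $q_n$ decays doubly exponentially in $n$.

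To estimate $q_n$, I mimic the geometric construction behind Lemma~A, but with the smaller radius $r_n=(2R_0/3)(\log 2^n)^{1/d}$ in place of $R_n$. Using the same disjoint subcubes $C_{n,k}$ for $k\in[-2^n,2^n-1]^d\cap\mathbb{Z}^d$, I replace \eqref{eqsnm}--\eqref{eqbnm} by $\widetilde{S}_{n,k}=\{x\in\mathbb{Z}^d:C(x)\subseteq\bar{B}(x(n,k),r_n)\}$ and $\widetilde{\mathcal{B}}_{n,k}=\{\widetilde{S}_{n,k}\subseteq\mathcal{C}\}$. The key calculation is that $|\widetilde{S}_{n,k}|\leq \omega_d r_n^d = (2/3)^d(d/\log(1/p))\log 2^n$, so the probability that all sites of $\widetilde{S}_{n,k}$ are vacant is at least $p^{(2/3)^d(d/\log(1/p))\log 2^n}=2^{-(2/3)^d d\,n}$. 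Combined with a uniform positive lower bound on a given vacant ball being connected to the infinite cluster (an input from supercritical site percolation, handled as in the proof of Lemma~A), this yields $\mathbb{P}(\widetilde{\mathcal{B}}_{n,k})\geq c\,2^{-(2/3)^d d\,n}$.

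Since the sets $\widetilde{S}_{n,k}$ are pairwise disjoint (each lying in the disjoint subcube $C_{n,k}$), the vacancy parts of the events are independent across $k$, and combining over the $(2\cdot 2^n)^d$ subcubes gives
\begin{equation*}
q_n \leq \bigl(1-c\,2^{-(2/3)^d d\,n}\bigr)^{(2\cdot 2^n)^d} \leq \exp\bigl(-c'\,2^{nd(1-(2/3)^d)}\bigr).
\end{equation*}
Since $(2/3)^d<1$, this bound is doubly exponentially small in $n$, whence $\sum_n n^3 q_n<\infty$ and Borel--Cantelli yields the claim.

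The main obstacle is the global nature of the $\mathcal{C}$-membership requirement: belonging to the infinite vacant cluster is non-local, so the full events $\widetilde{\mathcal{B}}_{n,k}$ are not strictly independent across $k$. This coupling is the same one encountered in the proof of Lemma~A and should be handled by the same device—conditioning on the (truly independent) local vacancy configurations in each $\widetilde{S}_{n,k}$ and invoking a uniform lower bound (from \ref{a1}) on the conditional probability that a vacant site lies in $\mathcal{C}$. The only genuinely new point in the present lemma is the quantitative factor $2/3<1$ in the radius; this factor is precisely what produces the exponential gain needed to absorb the polynomial union bound $n^3$ over the translates.
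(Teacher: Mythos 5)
Your skeleton (translation invariance, union bound over the $n^3$ translates, Borel--Cantelli) is exactly the paper's, but the core single-cube estimate is wrong. The product bound $q_n \leq \bigl(1-c\,2^{-(2/3)^d d\,n}\bigr)^{(2\cdot 2^n)^d}$ would require the failure events $\widetilde{\mathcal{B}}_{n,k}^c$ to be (at least approximately) independent or negatively correlated across $k$, and they are not: each $\widetilde{\mathcal{B}}_{n,k}$ is an increasing event in the vacancy configuration, so by FKG their complements are \emph{positively} correlated, and the inequality you need goes in the wrong direction. More decisively, your claimed doubly exponential bound is false: a single global configuration, namely a surface of traps enclosing the whole cube $\mathcal{T}_n^j$, simultaneously removes every site of the cube from $\mathcal{C}$ and hence kills all candidate clearings at once. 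That event has probability at least $(1-p)^{C L^{d-1}}$ with $L\asymp 2^n(\log 2^n)^{1/d}$, i.e.\ of order $\exp\bigl(-C' 2^{n(d-1)}n^{(d-1)/d}\bigr)$, which dwarfs your bound $\exp\bigl(-c'2^{nd(1-(2/3)^d)}\bigr)$ because $d(2/3)^d<1$ for all $d\geq 2$. The ``same device as Lemma A'' you invoke does not factorize the joint failure probability; it only lower-bounds each individual $\mathbb{P}(\widetilde{\mathcal{B}}_{n,k})$, and Sznitman's proof never uses a product of failure probabilities precisely because of this dependence.

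The paper's actual route, which you would need to follow, is a second-moment argument: with $S_{n,k}$ defined using radius $r_n+\sqrt{d}$ (note also that your $\widetilde{S}_{n,k}$ with radius $r_n$ does not guarantee that \emph{all} lattice sites of the Euclidean $r_n$-ball are vacant), one bounds $\mathbb{P}\bigl[\sum_k \mathbbm{1}_{\mathcal{B}_{n,k}}<2^{(n+1)d}b_n/2\bigr]$ via Chebyshev's inequality together with Sznitman's correlation/percolation estimates, obtaining a per-cube failure probability of order $\frac{4}{2^{(n+1)d}b_n}+c\,e^{-\gamma' r_n}$, i.e.\ only stretched-exponentially small in $n$ (roughly $e^{-cn^{1/d}}$). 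That weaker bound is all one can expect, and it is enough: multiplied by $n^3$ it remains summable, and Borel--Cantelli finishes the proof. Your factor $2/3<1$ does enter, but only to make the first (mean) term $\frac{4n^3}{2^{(n+1)d}b_n}$ summable, not to produce a doubly exponential gain.
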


\begin{proof}
Let $x_1,x_2,\ldots$ be a sequence of points in $\mathbb{Z}^d$, and $\mathcal{T}_n^j := x_j+\mathcal{T}_n$ for $j=1,2,\ldots$ Let $A_n^j$ and $E_n$ be the events that there is an accessible clearing of radius $r_n$ in $\mathcal{T}_n^j$ and in $\mathcal{T}_n$, respectively. Set $A_n:=\bigcap_{1\leq j\leq n^3}A_n^j$. It follows from the union bound that
\begin{equation} \label{eq400}
\mathbb{P}(A_n^c) \leq n^3 \mathbb{P}\left((A_n^j)^c\right). 
\end{equation}
Let $S_{n,k}$ and $\mathcal{B}_{n,k}$ be, respectively, as in \eqref{eqsnm} and \eqref{eqbnm} with the replacement of $R_n$ therein by $r_n+\sqrt{d}$. Set $b_n:=\mathbb{P}(\mathcal{B}_{n,k})$. The proof of \cite[Lemma 3.3]{S1993} shows that $b_n\geq \frac{1}{2}\exp\left[\omega_d (r_n+\sqrt{d})^d \ln p\right]$ for all large $n$, and via Chebyshev's inequality and a variance calculation that 
\begin{equation} \label{eq401}
\mathbb{P}\left[\sum_k \mathbbm{1}_{\mathcal{B}_{n,k}}<2^{(n+1)d}b_n/2\right] \leq \frac{4}{2^{(n+1)d}b_n}+32\beta e^{-\gamma\left(\frac{2 (r_n+\sqrt{d})}{\sqrt{d}}-2\right)} ,
\end{equation}
where $\beta>1$ and $\gamma>0$ are suitable positive constants. It is clear that 
$$ E_n^c   \subseteq    \left\{\sum_k \mathbbm{1}_{\mathcal{B}_{n,k}}=0\right\}     \subseteq\left\{\sum_k \mathbbm{1}_{\mathcal{B}_{n,k}}<2^{(n+1)d}b_n/2\right\}.   $$ 
Moreover, the proof of \cite[Lemma 3.3]{S1993} does not make use of the center of $\mathcal{T}_n$ but only uses the sidelength of $\mathcal{T}_n$, and therefore we may use the right-hand side of \eqref{eq401} as an upper bound for $\mathbb{P}((A_n^j)^c)$ for each $j$. It then follows from \eqref{eq400} and \eqref{eq401} that
\begin{equation} \label{eq402}
\sum_{n=1}^\infty \mathbb{P}(A_n^c) \leq \sum_{n=1}^\infty \left[\frac{4 n^3}{2^{(n+1)d}b_n}+32 n^3 \beta e^{-\gamma\left(\frac{2 (r_n+\sqrt{d})}{\sqrt{d}}-2\right)} \right]. 
\end{equation}
In view of $b_n\geq \frac{1}{2}\exp\left[\omega_d (r_n+\sqrt{d})^d \ln p\right]$, and using \eqref{eqro} and the definition of $r_n$ from \eqref{eqrn2}, one can show that for all large $n$ the first term inside the summation in \eqref{eq402} is bounded from above by 
$$ \frac{8}{2^d} \frac{n^3}{2^{nd/2}} . $$
Moreover, for all large $n$ the second term inside the summation in \eqref{eq402} is bounded from above by
$$ 32\beta n^3 \exp\left[-\left(\gamma  R_0 (\log 2)^{1/d}/\sqrt{d}\right)n^{1/d}\right]  . $$
Hence, the series in \eqref{eq402} is summable, and an application of the Borel-Cantelli lemma completes the proof.  
\end{proof}

\bigskip

\textbf{\underline{Part 1}: Many large clearings.} Let $\rho:\mathbb{N}_{\geq 3}\to\mathbb{R}_+$ be defined by 
\begin{equation} \label{eqrhot}
\rho(m)=k_2\frac{m}{(\log m)^{2/d}} , \quad k_2>0 . 
\end{equation}
Observe that $\rho$ is increasing and that $\lim_{m\to\infty}\rho(m)=\infty$. Now choose $n=n(m)$ such that for large enough $m$, 
\begin{equation} \nonumber
2^{n(m)}\log 2^{n(m)} \leq \rho(m) < 2^{n(m)+1}\log 2^{n(m)+1} .
\end{equation} 
This implies that for any fixed $\alpha>1$, for all large $n$,
\begin{equation} \label{eq404}
\log \rho(m) - \alpha\log\log \rho(m) \leq \log 2^{n(m)} .
\end{equation}
Then, it follows from \eqref{eq404} and the choice of $\rho(m)$ in \eqref{eqrhot} that for all large $n$,
$$ n^2 \geq \frac{1}{2(\log 2)^2}(\log m)^2 ,$$
where we have dropped the $m$-dependence of $n$ in notation. This implies that for any fixed constant $c>0$ there exists $n_0=n_0(c)\geq 1$ such that for all $n\geq n_0$,
\begin{equation} \label{eqn3}
n^3 \geq c(\log m)^2 .  
\end{equation} 
Set 
\begin{equation} \nonumber
\widehat{\rho}(m) := \left\lfloor \frac{\rho(m)}{2\sqrt{d}} \right\rfloor. 
\end{equation}   
Now let $A>0$, and for large $m\in\mathbb{N}$, consider the simple cubic packing of $[-Am,Am]^d$ with balls of radius $\widehat{\rho}(m)$. In detail, for $j\in \left[ -\left\lceil \frac{Am}{2\widehat{\rho}(m)} \right\rceil , \left\lceil \frac{Am}{2\widehat{\rho}(m)}\right\rceil -1\right]^d \cap \mathbb{Z}^d =: \textbf{S}(m)$, introduce the cubes
$$  C_j := \left\{ z\in\mathbb{R}^d : j_i\, 2\widehat{\rho}(m) \leq z_i \leq (j_i+1)2\widehat{\rho}(m) \right\}   .$$
Observe that each $C_j$ is centered at a point in $\mathbb{Z}^d$ and that 
$$ \bigcup_{j\in\textbf{S}} C_j \supseteq  [-Am,Am]^d .   $$
For $j\in\textbf{S}(m)$, let $B_m^j=B(j,\widehat{\rho}(m))$ be the ball inscribed in $C_j$. From elementary geometry, one can show that
\begin{equation} \label{eq406}
\forall \: x \: \in [-Am,Am]^d \cap \mathbb{Z}^d, \quad\:\: \min_{j\in\textbf{S}(m)} \max_{z\in B_m^j \cap \mathbb{Z}^d} |x-z| < \rho(m) .
\end{equation}
On the other hand, for all large $n$,
\begin{equation} \label{eqnt}
 |\textbf{S}(m)| = \left(2\left\lceil \frac{Am}{2\widehat{\rho}(m)} \right\rceil\right)^d \leq \left(\frac{3A\sqrt{d}}{k_2}\right)^d(\log m)^2 \leq n^3,
\end{equation}
where the last inequality follows from \eqref{eqn3} upon setting $c=(3A\sqrt{d}/k_2)^d$ therein. 

Next, recall $r_n=(2R_0/3)(\log 2^n)^{1/d}$ from \eqref{eqrn2}. Using \eqref{eq404}, we have for all large $m$, 
\begin{equation} \label{eq405}
(\log 2^n)^{1/d} \geq (\log \rho(m))^{1/d}\left[1-\frac{\alpha\log\log\rho(m)}{\log\rho(m)}\right]^{1/d} \geq (3/4)(\log \rho(m))^{1/d}.
\end{equation}
On the other hand, by definition of $\mathcal{T}_n$ in \eqref{eqtn} and since $d\geq 2$, it follows from  $\rho(m)\geq 2^{n(m)}\log 2^{n(m)}$ that for all large $m$,
\begin{equation} \label{eq4050}
[-\widehat{\rho}(m),\widehat{\rho}(m)]^d \supseteq \mathcal{T}_n.  
\end{equation}
Then, by Lemma~\ref{lemma1}, upon setting $x_j=z_j$ for $x_j$ therein, and using \eqref{eq406}-\eqref{eq4050}, we reach the following conclusion. On a set of full $\mathbb{P}$-measure, say $\Omega_3$, there exists $m_3=m_3(\omega)>0$ such that for all $m\geq m_3$, any cube of sidelength $2\rho(m)$ centered at a site within $[-Am,Am]^d$ contains an accessible clearing of radius $(R_0/2)(\log \rho(m))^{1/d}$. %(see Figure~\ref{figure1}).

\bigskip

\textbf{\underline{Part 2}: One huge clearing.} Choose $n=n(m)$ such that for large enough $m$,
\begin{equation} \nonumber
2^{n}\log 2^{n} \leq Am < 2^{n+1}\log 2^{n+1} .
\end{equation}
After some algebra, one can show from $Am < 2^{n+1}\log 2^{n+1}$ and since $d\geq 2$ that for each fixed $\delta>0$,
\begin{equation} \nonumber
(\log 2^n)^{1/d} \geq (\log m)^{1/d}-\delta
\end{equation} 
for all large $m$. Then, by Lemma A, we reach the following conclusion. On a set of full $\mathbb{P}$-measure, say $\Omega_4$, there exists $m_4=m_4(\omega)$ such that for all $m\geq m_4$, $[-Am,Am]^d$ contains an accessible clearing of radius $R_0(\log m)^{1/d}-2\sqrt{d}$. % (see Figure~\ref{figure1}).

Summarizing our findings in this section, we have $\mathbb{P}(\Omega_3)=1=\mathbb{P}(\Omega_4)$, where 
\begin{align}
\Omega_3&=\Big\{\omega\in\Omega:\exists\:n_3=n_3(\omega) \:\: \forall\:n\geq n_3, \: \forall\:x\in[-An,An]^d\cap\mathbb{Z}^d \:\:\exists\: y\in x+[-\rho(n),\rho(n)]^d \nonumber \\
& \quad \quad \text{such that} \:\: B\left(y,(R_0/2)(\log \rho(n))^{1/d}\right)\cap\mathbb{Z}^d\subseteq \mathcal{C} \Big\}, \label{eqomega3} \\
\Omega_4&=\Big\{\omega\in\Omega:\exists\:n_4=n_4(\omega) \:\: \forall\:n\geq n_4, \:\:\exists\: y\in \mathbb{R}^d\:\:\text{such that} \nonumber \\
& \quad \quad B\left(y,R_0(\log n))^{1/d}-2\sqrt{d}\right)\cap\mathbb{Z}^d\subseteq [-An,An]^d\cap \mathcal{C} \Big\} . \label{eqomega4}
\end{align}
We refer the reader to Figure~\ref{figure1} for an illustration of an environment in $\Omega_3\cap\Omega_4$. Finally, recall the definitions of $\Omega_1$ from \eqref{eqomega1} and $\Omega_2$ from \eqref{eqomega2}, and set 
\begin{equation} \label{eqomega0}
\Omega_0 = \bigcap_{1\leq i\leq 4}\Omega_i  . 
\end{equation}
Then, $\mathbb{P}(\Omega_0)=1$ since $\mathbb{P}(\Omega_i)=1$ for each $1\leq i\leq 4$. We will use $\Omega_0$ as our a.s.-environment such that the SLLN for the mass of the BRW among Bernoulli traps in $\mathbb{Z}^d$ holds on $\Omega_0 \cap \{\mathbf{0}\in\mathcal{C}\}$.

\begin{figure}[ht]
\begin{center}

\begin{tikzpicture}[scale=1.1]

\draw[step=2.5cm,gray,line width=0.01mm] (-0.3,-0.3) grid (10.3,10.3);
\draw[<->, very thin] (10.33,-0.3) -- (10.33,5);
\node[] at (10.6,2.5) {\tiny $A n$};
\draw[<->, very thin] (10.2,7.5) -- (10.2,10);
\node[] at (10.6,8.7) {\tiny $\widehat\rho(n)$};
\draw[<->, very thin] (5,-0.3) -- (10.3,-0.3);
\node[] at (7.5,-0.5) {\tiny $A n$};
\draw[<->, very thin] (7.5,10.2) -- (10,10.2);
\node[] at (8.75,10.4) {\tiny $\widehat\rho(n)$};
\node at (5,5) [thin]{.};
\node[] at (5.15,4.85) {\tiny $0$};

\filldraw[fill=white, draw=gray] (0,0) circle (0.04cm);
\filldraw[fill=white, draw=gray] (0.5,0) circle (0.04cm);
\filldraw[fill=red, draw=red] (1,0) circle (0.04cm);
\filldraw[fill=white, draw=gray] (1.5,0) circle (0.04cm);
\filldraw[fill=red, draw=red] (2,0) circle (0.04cm);
\filldraw[fill=white, draw=gray] (2.5,0) circle (0.04cm);
\filldraw[fill=red, draw=red] (3,0) circle (0.04cm);
\filldraw[fill=white, draw=gray] (3.5,0) circle (0.04cm);
\filldraw[fill=white, draw=gray] (4,0) circle (0.04cm);
\filldraw[fill=red, draw=red] (4.5,0) circle (0.04cm);
\filldraw[fill=white, draw=gray] (5,0) circle (0.04cm);
\filldraw[fill=red, draw=red] (5.5,0) circle (0.04cm);
\filldraw[fill=red, draw=red] (6,0) circle (0.04cm);
\filldraw[fill=red, draw=red] (6.5,0) circle (0.04cm);
\filldraw[fill=white, draw=gray] (7,0) circle (0.04cm);
\filldraw[fill=white, draw=gray] (7.5,0) circle (0.04cm);
\filldraw[fill=red, draw=red] (8,0) circle (0.04cm);
\filldraw[fill=red, draw=red] (8.5,0) circle (0.04cm);
\filldraw[fill=red, draw=red] (9,0) circle (0.04cm);
\filldraw[fill=white, draw=gray] (9.5,0) circle (0.04cm);
\filldraw[fill=white, draw=gray] (10,0) circle (0.04cm);

\filldraw[fill=red, draw=red] (0,0.5) circle (0.04cm);
\filldraw[fill=white, draw=gray] (0.5,0.5) circle (0.04cm);
\filldraw[fill=red, draw=red] (1,0.5) circle (0.04cm);
\filldraw[fill=white, draw=gray] (1.5,0.5) circle (0.04cm);
\filldraw[fill=white, draw=gray] (2,0.5) circle (0.04cm);
\filldraw[fill=white, draw=gray] (2.5,0.5) circle (0.04cm);
\filldraw[fill=red, draw=red] (3,0.5) circle (0.04cm);
\filldraw[fill=red, draw=red] (3.5,0.5) circle (0.04cm);
\filldraw[fill=white, draw=gray] (4,0.5) circle (0.04cm);
\filldraw[fill=red, draw=red] (4.5,0.5) circle (0.04cm);
\filldraw[fill=red, draw=red] (5,0.5) circle (0.04cm);
\filldraw[fill=red, draw=red] (5.5,0.5) circle (0.04cm);
\filldraw[fill=red, draw=red] (6,0.5) circle (0.04cm);
\filldraw[fill=white, draw=gray] (6.5,0.5) circle (0.04cm);
\filldraw[fill=white, draw=gray] (7,0.5) circle (0.04cm);
\filldraw[fill=white, draw=gray] (7.5,0.5) circle (0.04cm);
\filldraw[fill=red, draw=red] (8,0.5) circle (0.04cm);
\filldraw[fill=white, draw=gray] (8.5,0.5) circle (0.04cm);
\filldraw[fill=red, draw=red] (9,0.5) circle (0.04cm);
\filldraw[fill=white, draw=gray] (9.5,0.5) circle (0.04cm);
\filldraw[fill=white, draw=gray] (10,0.5) circle (0.04cm);

\filldraw[fill=red, draw=red] (0,1) circle (0.04cm);
\filldraw[fill=red, draw=red] (0.5,1) circle (0.04cm);
\filldraw[fill=white, draw=gray] (1,1) circle (0.04cm);
\filldraw[fill=white, draw=gray] (1.5,1) circle (0.04cm);
\filldraw[fill=white, draw=gray] (2,1) circle (0.04cm);
\filldraw[fill=red, draw=red] (2.5,1) circle (0.04cm);
\filldraw[fill=red, draw=red] (3,1) circle (0.04cm);
\filldraw[fill=red, draw=red] (3.5,1) circle (0.04cm);
\filldraw[fill=white, draw=gray] (4,1) circle (0.04cm);
\filldraw[fill=white, draw=gray] (4.5,1) circle (0.04cm);
\filldraw[fill=white, draw=gray] (5,1) circle (0.04cm);
\filldraw[fill=white, draw=gray] (5.5,1) circle (0.04cm);
\filldraw[fill=white, draw=gray] (6,1) circle (0.04cm);
\filldraw[fill=white, draw=gray] (6.5,1) circle (0.04cm);
\filldraw[fill=white, draw=gray] (7,1) circle (0.04cm);
\filldraw[fill=red, draw=red] (7.5,1) circle (0.04cm);
\filldraw[fill=white, draw=gray] (8,1) circle (0.04cm);
\filldraw[fill=white, draw=gray] (8.5,1) circle (0.04cm);
\filldraw[fill=red, draw=red] (9,1) circle (0.04cm);
\filldraw[fill=red, draw=red] (9.5,1) circle (0.04cm);
\filldraw[fill=red, draw=red] (10,1) circle (0.04cm);

\filldraw[fill=red, draw=red] (0,1.5) circle (0.04cm);
\filldraw[fill=white, draw=gray] (0.5,1.5) circle (0.04cm);
\filldraw[fill=white, draw=gray] (1,1.5) circle (0.04cm);
\filldraw[fill=white, draw=gray] (1.5,1.5) circle (0.04cm);
\filldraw[fill=white, draw=gray] (2,1.5) circle (0.04cm);
\filldraw[fill=white, draw=gray] (2.5,1.5) circle (0.04cm);
\filldraw[fill=red, draw=red] (3,1.5) circle (0.04cm);
\filldraw[fill=red, draw=red] (3.5,1.5) circle (0.04cm);
\filldraw[fill=white, draw=gray] (4,1.5) circle (0.04cm);
\filldraw[fill=red, draw=red] (4.5,1.5) circle (0.04cm);
\filldraw[fill=white, draw=gray] (5,1.5) circle (0.04cm);
\filldraw[fill=red, draw=red] (5.5,1.5) circle (0.04cm);
\filldraw[fill=red, draw=red] (6,1.5) circle (0.04cm);
\filldraw[fill=white, draw=gray] (6.5,1.5) circle (0.04cm);
\filldraw[fill=white, draw=gray] (7,1.5) circle (0.04cm);
\filldraw[fill=red, draw=red] (7.5,1.5) circle (0.04cm);
\filldraw[fill=red, draw=red] (8,1.5) circle (0.04cm);
\filldraw[fill=red, draw=red] (8.5,1.5) circle (0.04cm);
\filldraw[fill=white, draw=gray] (9,1.5) circle (0.04cm);
\filldraw[fill=white, draw=gray] (9.5,1.5) circle (0.04cm);
\filldraw[fill=red, draw=red] (10,1.5) circle (0.04cm);

\filldraw[fill=white, draw=gray] (0,2) circle (0.04cm);
\filldraw[fill=white, draw=gray] (0.5,2) circle (0.04cm);
\filldraw[fill=white, draw=gray] (1,2) circle (0.04cm);
\filldraw[fill=white, draw=gray] (1.5,2) circle (0.04cm);
\filldraw[fill=white, draw=gray] (2,2) circle (0.04cm);
\filldraw[fill=white, draw=gray] (2.5,2) circle (0.04cm);
\filldraw[fill=red, draw=red] (3,2) circle (0.04cm);
\filldraw[fill=white, draw=gray] (3.5,2) circle (0.04cm);
\filldraw[fill=white, draw=gray] (4,2) circle (0.04cm);
\filldraw[fill=red, draw=red] (4.5,2) circle (0.04cm);
\filldraw[fill=red, draw=red] (5,2) circle (0.04cm);
\filldraw[fill=red, draw=red] (5.5,2) circle (0.04cm);
\filldraw[fill=white, draw=gray] (6,2) circle (0.04cm);
\filldraw[fill=red, draw=red] (6.5,2) circle (0.04cm);
\filldraw[fill=red, draw=red] (7,2) circle (0.04cm);
\filldraw[fill=white, draw=gray] (7.5,2) circle (0.04cm);
\filldraw[fill=white, draw=gray] (8,2) circle (0.04cm);
\filldraw[fill=red, draw=red] (8.5,2) circle (0.04cm);
\filldraw[fill=red, draw=red] (9,2) circle (0.04cm);
\filldraw[fill=white, draw=gray] (9.5,2) circle (0.04cm);
\filldraw[fill=white, draw=gray] (10,2) circle (0.04cm);

\filldraw[fill=white, draw=gray] (0,2.5) circle (0.04cm);
\filldraw[fill=white, draw=gray] (0.5,2.5) circle (0.04cm);
\filldraw[fill=white, draw=gray] (1,2.5) circle (0.04cm);
\filldraw[fill=white, draw=gray] (1.5,2.5) circle (0.04cm);
\filldraw[fill=white, draw=gray] (2,2.5) circle (0.04cm);
\filldraw[fill=white, draw=gray] (2.5,2.5) circle (0.04cm);
\filldraw[fill=red, draw=red] (3,2.5) circle (0.04cm);
\filldraw[fill=white, draw=gray] (3.5,2.5) circle (0.04cm);
\filldraw[fill=white, draw=gray] (4,2.5) circle (0.04cm);
\filldraw[fill=red, draw=red] (4.5,2.5) circle (0.04cm);
\filldraw[fill=red, draw=red] (5,2.5) circle (0.04cm);
\filldraw[fill=white, draw=gray] (5.5,2.5) circle (0.04cm);
\filldraw[fill=red, draw=red] (6,2.5) circle (0.04cm);
\filldraw[fill=red, draw=red] (6.5,2.5) circle (0.04cm);
\filldraw[fill=white, draw=gray] (7,2.5) circle (0.04cm);
\filldraw[fill=white, draw=gray] (7.5,2.5) circle (0.04cm);
\filldraw[fill=red, draw=red] (8,2.5) circle (0.04cm);
\filldraw[fill=red, draw=red] (8.5,2.5) circle (0.04cm);
\filldraw[fill=red, draw=red] (9,2.5) circle (0.04cm);
\filldraw[fill=white, draw=gray] (9.5,2.5) circle (0.04cm);
\filldraw[fill=white, draw=gray] (10,2.5) circle (0.04cm);

\filldraw[fill=red, draw=red] (0,3) circle (0.04cm);
\filldraw[fill=white, draw=gray] (0.5,3) circle (0.04cm);
\filldraw[fill=white, draw=gray] (1,3) circle (0.04cm);
\filldraw[fill=white, draw=gray] (1.5,3) circle (0.04cm);
\filldraw[fill=white, draw=gray] (2,3) circle (0.04cm);
\filldraw[fill=white, draw=gray] (2.5,3) circle (0.04cm);
\filldraw[fill=white, draw=gray] (3,3) circle (0.04cm);
\filldraw[fill=white, draw=gray] (3.5,3) circle (0.04cm);
\filldraw[fill=white, draw=gray] (4,3) circle (0.04cm);
\filldraw[fill=red, draw=red] (4.5,3) circle (0.04cm);
\filldraw[fill=red, draw=red] (5,3) circle (0.04cm);
\filldraw[fill=white, draw=gray] (5.5,3) circle (0.04cm);
\filldraw[fill=red, draw=red] (6,3) circle (0.04cm);
\filldraw[fill=red, draw=red] (6.5,3) circle (0.04cm);
\filldraw[fill=white, draw=gray] (7,3) circle (0.04cm);
\filldraw[fill=white, draw=gray] (7.5,3) circle (0.04cm);
\filldraw[fill=white, draw=gray] (8,3) circle (0.04cm);
\filldraw[fill=red, draw=red] (8.5,3) circle (0.04cm);
\filldraw[fill=white, draw=gray] (9,3) circle (0.04cm);
\filldraw[fill=white, draw=gray] (9.5,3) circle (0.04cm);
\filldraw[fill=white, draw=gray] (10,3) circle (0.04cm);

\filldraw[fill=red, draw=red] (0,3.5) circle (0.04cm);
\filldraw[fill=red, draw=red] (0.5,3.5) circle (0.04cm);
\filldraw[fill=white, draw=gray] (1,3.5) circle (0.04cm);
\filldraw[fill=white, draw=gray] (1.5,3.5) circle (0.04cm);
\filldraw[fill=white, draw=gray] (2,3.5) circle (0.04cm);
\filldraw[fill=white, draw=gray] (2.5,3.5) circle (0.04cm);
\filldraw[fill=red, draw=red] (3,3.5) circle (0.04cm);
\filldraw[fill=white, draw=gray] (3.5,3.5) circle (0.04cm);
\filldraw[fill=white, draw=gray] (4,3.5) circle (0.04cm);
\filldraw[fill=red, draw=red] (4.5,3.5) circle (0.04cm);
\filldraw[fill=red, draw=red] (5,3.5) circle (0.04cm);
\filldraw[fill=white, draw=gray] (5.5,3.5) circle (0.04cm);
\filldraw[fill=red, draw=red] (6,3.5) circle (0.04cm);
\filldraw[fill=red, draw=red] (6.5,3.5) circle (0.04cm);
\filldraw[fill=white, draw=gray] (7,3.5) circle (0.04cm);
\filldraw[fill=white, draw=gray] (7.5,3.5) circle (0.04cm);
\filldraw[fill=red, draw=red] (8,3.5) circle (0.04cm);
\filldraw[fill=red, draw=red] (8.5,3.5) circle (0.04cm);
\filldraw[fill=white, draw=gray] (9,3.5) circle (0.04cm);
\filldraw[fill=white, draw=gray] (9.5,3.5) circle (0.04cm);
\filldraw[fill=red, draw=red] (10,3.5) circle (0.04cm);

\filldraw[fill=red, draw=red] (0,4) circle (0.04cm);
\filldraw[fill=white, draw=gray] (0.5,4) circle (0.04cm);
\filldraw[fill=white, draw=gray] (1,4) circle (0.04cm);
\filldraw[fill=white, draw=gray] (1.5,4) circle (0.04cm);
\filldraw[fill=red, draw=red] (2,4) circle (0.04cm);
\filldraw[fill=white, draw=gray] (2.5,4) circle (0.04cm);
\filldraw[fill=white, draw=gray] (3,4) circle (0.04cm);
\filldraw[fill=red, draw=red] (3.5,4) circle (0.04cm);
\filldraw[fill=white, draw=gray] (4,4) circle (0.04cm);
\filldraw[fill=white, draw=gray] (4.5,4) circle (0.04cm);
\filldraw[fill=red, draw=red] (5,4) circle (0.04cm);
\filldraw[fill=white, draw=gray] (5.5,4) circle (0.04cm);
\filldraw[fill=red, draw=red] (6,4) circle (0.04cm);
\filldraw[fill=red, draw=red] (6.5,4) circle (0.04cm);
\filldraw[fill=white, draw=gray] (7,4) circle (0.04cm);
\filldraw[fill=red, draw=red] (7.5,4) circle (0.04cm);
\filldraw[fill=white, draw=gray] (8,4) circle (0.04cm);
\filldraw[fill=red, draw=red] (8.5,4) circle (0.04cm);
\filldraw[fill=red, draw=red] (9,4) circle (0.04cm);
\filldraw[fill=white, draw=gray] (9.5,4) circle (0.04cm);
\filldraw[fill=white, draw=gray] (10,4) circle (0.04cm);

\filldraw[fill=red, draw=red] (0,4.5) circle (0.04cm);
\filldraw[fill=red, draw=red] (0.5,4.5) circle (0.04cm);
\filldraw[fill=red, draw=red] (1,4.5) circle (0.04cm);
\filldraw[fill=white, draw=gray] (1.5,4.5) circle (0.04cm);
\filldraw[fill=red, draw=red] (2,4.5) circle (0.04cm);
\filldraw[fill=white, draw=gray] (2.5,4.5) circle (0.04cm);
\filldraw[fill=red, draw=red] (3,4.5) circle (0.04cm);
\filldraw[fill=white, draw=gray] (3.5,4.5) circle (0.04cm);
\filldraw[fill=red, draw=red] (4,4.5) circle (0.04cm);
\filldraw[fill=white, draw=gray] (4.5,4.5) circle (0.04cm);
\filldraw[fill=red, draw=red] (5,4.5) circle (0.04cm);
\filldraw[fill=white, draw=gray] (5.5,4.5) circle (0.04cm);
\filldraw[fill=red, draw=red] (6,4.5) circle (0.04cm);
\filldraw[fill=white, draw=gray] (6.5,4.5) circle (0.04cm);
\filldraw[fill=red, draw=red] (7,4.5) circle (0.04cm);
\filldraw[fill=white, draw=gray] (7.5,4.5) circle (0.04cm);
\filldraw[fill=red, draw=red] (8,4.5) circle (0.04cm);
\filldraw[fill=white, draw=gray] (8.5,4.5) circle (0.04cm);
\filldraw[fill=white, draw=gray] (9,4.5) circle (0.04cm);
\filldraw[fill=white, draw=gray] (9.5,4.5) circle (0.04cm);
\filldraw[fill=red, draw=red] (10,4.5) circle (0.04cm);

\filldraw[fill=red, draw=red] (0,5) circle (0.04cm);
\filldraw[fill=red, draw=red] (0.5,5) circle (0.04cm);
\filldraw[fill=white, draw=gray] (1,5) circle (0.04cm);
\filldraw[fill=white, draw=gray] (1.5,5) circle (0.04cm);
\filldraw[fill=red, draw=red] (2,5) circle (0.04cm);
\filldraw[fill=white, draw=gray] (2.5,5) circle (0.04cm);
\filldraw[fill=white, draw=gray] (3,5) circle (0.04cm);
\filldraw[fill=red, draw=red] (3.5,5) circle (0.04cm);
\filldraw[fill=red, draw=red] (4,5) circle (0.04cm);
\filldraw[fill=white, draw=gray] (4.5,5) circle (0.04cm);
\filldraw[fill=white, draw=gray] (5,5) circle (0.04cm);
\filldraw[fill=white, draw=gray] (5.5,5) circle (0.04cm);
\filldraw[fill=red, draw=red] (6,5) circle (0.04cm);
\filldraw[fill=red, draw=red] (6.5,5) circle (0.04cm);
\filldraw[fill=white, draw=gray] (7,5) circle (0.04cm);
\filldraw[fill=red, draw=red] (7.5,5) circle (0.04cm);
\filldraw[fill=white, draw=gray] (8,5) circle (0.04cm);
\filldraw[fill=red, draw=red] (8.5,5) circle (0.04cm);
\filldraw[fill=white, draw=gray] (9,5) circle (0.04cm);
\filldraw[fill=white, draw=gray] (9.5,5) circle (0.04cm);
\filldraw[fill=white, draw=gray] (10,5) circle (0.04cm);

\filldraw[fill=red, draw=red] (0,5.5) circle (0.04cm);
\filldraw[fill=white, draw=gray] (0.5,5.5) circle (0.04cm);
\filldraw[fill=red, draw=red] (1,5.5) circle (0.04cm);
\filldraw[fill=white, draw=gray] (1.5,5.5) circle (0.04cm);
\filldraw[fill=red, draw=red] (2,5.5) circle (0.04cm);
\filldraw[fill=white, draw=gray] (2.5,5.5) circle (0.04cm);
\filldraw[fill=red, draw=red] (3,5.5) circle (0.04cm);
\filldraw[fill=white, draw=gray] (3.5,5.5) circle (0.04cm);
\filldraw[fill=red, draw=red] (4,5.5) circle (0.04cm);
\filldraw[fill=white, draw=gray] (4.5,5.5) circle (0.04cm);
\filldraw[fill=red, draw=red] (5,5.5) circle (0.04cm);
\filldraw[fill=white, draw=gray] (5.5,5.5) circle (0.04cm);
\filldraw[fill=red, draw=red] (6,5.5) circle (0.04cm);
\filldraw[fill=white, draw=gray] (6.5,5.5) circle (0.04cm);
\filldraw[fill=white, draw=gray] (7,5.5) circle (0.04cm);
\filldraw[fill=white, draw=gray] (7.5,5.5) circle (0.04cm);
\filldraw[fill=red, draw=red] (8,5.5) circle (0.04cm);
\filldraw[fill=white, draw=gray] (8.5,5.5) circle (0.04cm);
\filldraw[fill=white, draw=gray] (9,5.5) circle (0.04cm);
\filldraw[fill=white, draw=gray] (9.5,5.5) circle (0.04cm);
\filldraw[fill=white, draw=gray] (10,5.5) circle (0.04cm);

\filldraw[fill=white, draw=gray] (0,6) circle (0.04cm);
\filldraw[fill=red, draw=red] (0.5,6) circle (0.04cm);
\filldraw[fill=white, draw=gray] (1,6) circle (0.04cm);
\filldraw[fill=white, draw=gray] (1.5,6) circle (0.04cm);
\filldraw[fill=red, draw=red] (2,6) circle (0.04cm);
\filldraw[fill=white, draw=gray] (2.5,6) circle (0.04cm);
\filldraw[fill=white, draw=gray] (3,6) circle (0.04cm);
\filldraw[fill=red, draw=red] (3.5,6) circle (0.04cm);
\filldraw[fill=red, draw=red] (4,6) circle (0.04cm);
\filldraw[fill=white, draw=gray] (4.5,6) circle (0.04cm);
\filldraw[fill=red, draw=red] (5,6) circle (0.04cm);
\filldraw[fill=white, draw=gray] (5.5,6) circle (0.04cm);
\filldraw[fill=red, draw=red] (6,6) circle (0.04cm);
\filldraw[fill=red, draw=red] (6.5,6) circle (0.04cm);
\filldraw[fill=white, draw=gray] (7,6) circle (0.04cm);
\filldraw[fill=red, draw=red] (7.5,6) circle (0.04cm);
\filldraw[fill=red, draw=red] (8,6) circle (0.04cm);
\filldraw[fill=red, draw=red] (8.5,6) circle (0.04cm);
\filldraw[fill=white, draw=gray] (9,6) circle (0.04cm);
\filldraw[fill=white, draw=gray] (9.5,6) circle (0.04cm);
\filldraw[fill=white, draw=gray] (10,6) circle (0.04cm);

\filldraw[fill=white, draw=gray] (0,6.5) circle (0.04cm);
\filldraw[fill=red, draw=red] (0.5,6.5) circle (0.04cm);
\filldraw[fill=white, draw=gray] (1,6.5) circle (0.04cm);
\filldraw[fill=red, draw=red] (1.5,6.5) circle (0.04cm);
\filldraw[fill=red, draw=red] (2,6.5) circle (0.04cm);
\filldraw[fill=red, draw=red] (2.5,6.5) circle (0.04cm);
\filldraw[fill=white, draw=gray] (3,6.5) circle (0.04cm);
\filldraw[fill=red, draw=red] (3.5,6.5) circle (0.04cm);
\filldraw[fill=white, draw=gray] (4,6.5) circle (0.04cm);
\filldraw[fill=white, draw=gray] (4.5,6.5) circle (0.04cm);
\filldraw[fill=white, draw=gray] (5,6.5) circle (0.04cm);
\filldraw[fill=red, draw=red] (5.5,6.5) circle (0.04cm);
\filldraw[fill=white, draw=gray] (6,6.5) circle (0.04cm);
\filldraw[fill=white, draw=gray] (6.5,6.5) circle (0.04cm);
\filldraw[fill=red, draw=red] (7,6.5) circle (0.04cm);
\filldraw[fill=red, draw=red] (7.5,6.5) circle (0.04cm);
\filldraw[fill=white, draw=gray] (8,6.5) circle (0.04cm);
\filldraw[fill=red, draw=red] (8.5,6.5) circle (0.04cm);
\filldraw[fill=white, draw=gray] (9,6.5) circle (0.04cm);
\filldraw[fill=red, draw=red] (9.5,6.5) circle (0.04cm);
\filldraw[fill=red, draw=red] (10,6.5) circle (0.04cm);

\filldraw[fill=red, draw=red] (0,7) circle (0.04cm);
\filldraw[fill=red, draw=red] (0.5,7) circle (0.04cm);
\filldraw[fill=white, draw=gray] (1,7) circle (0.04cm);
\filldraw[fill=white, draw=gray] (1.5,7) circle (0.04cm);
\filldraw[fill=red, draw=red] (2,7) circle (0.04cm);
\filldraw[fill=white, draw=gray] (2.5,7) circle (0.04cm);
\filldraw[fill=white, draw=gray] (3,7) circle (0.04cm);
\filldraw[fill=white, draw=gray] (3.5,7) circle (0.04cm);
\filldraw[fill=red, draw=red] (4,7) circle (0.04cm);
\filldraw[fill=red, draw=red] (4.5,7) circle (0.04cm);
\filldraw[fill=white, draw=gray] (5,7) circle (0.04cm);
\filldraw[fill=white, draw=gray] (5.5,7) circle (0.04cm);
\filldraw[fill=white, draw=gray] (6,7) circle (0.04cm);
\filldraw[fill=white, draw=gray] (6.5,7) circle (0.04cm);
\filldraw[fill=white, draw=gray] (7,7) circle (0.04cm);
\filldraw[fill=red, draw=red] (7.5,7) circle (0.04cm);
\filldraw[fill=red, draw=red] (8,7) circle (0.04cm);
\filldraw[fill=red, draw=red] (8.5,7) circle (0.04cm);
\filldraw[fill=white, draw=gray] (9,7) circle (0.04cm);
\filldraw[fill=red, draw=red] (9.5,7) circle (0.04cm);
\filldraw[fill=red, draw=red] (10,7) circle (0.04cm);

\filldraw[fill=white, draw=gray] (0,7.5) circle (0.04cm);
\filldraw[fill=red, draw=red] (0.5,7.5) circle (0.04cm);
\filldraw[fill=red, draw=red] (1,7.5) circle (0.04cm);
\filldraw[fill=white, draw=gray] (1.5,7.5) circle (0.04cm);
\filldraw[fill=red, draw=red] (2,7.5) circle (0.04cm);
\filldraw[fill=red, draw=red] (2.5,7.5) circle (0.04cm);
\filldraw[fill=white, draw=gray] (3,7.5) circle (0.04cm);
\filldraw[fill=red, draw=red] (3.5,7.5) circle (0.04cm);
\filldraw[fill=white, draw=gray] (4,7.5) circle (0.04cm);
\filldraw[fill=white, draw=gray] (4.5,7.5) circle (0.04cm);
\filldraw[fill=white, draw=gray] (5,7.5) circle (0.04cm);
\filldraw[fill=red, draw=red] (5.5,7.5) circle (0.04cm);
\filldraw[fill=red, draw=red] (6,7.5) circle (0.04cm);
\filldraw[fill=white, draw=gray] (6.5,7.5) circle (0.04cm);
\filldraw[fill=white, draw=gray] (7,7.5) circle (0.04cm);
\filldraw[fill=white, draw=gray] (7.5,7.5) circle (0.04cm);
\filldraw[fill=white, draw=gray] (8,7.5) circle (0.04cm);
\filldraw[fill=red, draw=red] (8.5,7.5) circle (0.04cm);
\filldraw[fill=white, draw=gray] (9,7.5) circle (0.04cm);
\filldraw[fill=red, draw=red] (9.5,7.5) circle (0.04cm);
\filldraw[fill=white, draw=gray] (10,7.5) circle (0.04cm);

\filldraw[fill=white, draw=gray] (0,8) circle (0.04cm);
\filldraw[fill=red, draw=red] (0.5,8) circle (0.04cm);
\filldraw[fill=white, draw=gray] (1,8) circle (0.04cm);
\filldraw[fill=white, draw=gray] (1.5,8) circle (0.04cm);
\filldraw[fill=red, draw=red] (2,8) circle (0.04cm);
\filldraw[fill=white, draw=gray] (2.5,8) circle (0.04cm);
\filldraw[fill=white, draw=gray] (3,8) circle (0.04cm);
\filldraw[fill=white, draw=gray] (3.5,8) circle (0.04cm);
\filldraw[fill=red, draw=red] (4,8) circle (0.04cm);
\filldraw[fill=white, draw=gray] (4.5,8) circle (0.04cm);
\filldraw[fill=white, draw=gray] (5,8) circle (0.04cm);
\filldraw[fill=white, draw=gray] (5.5,8) circle (0.04cm);
\filldraw[fill=white, draw=gray] (6,8) circle (0.04cm);
\filldraw[fill=white, draw=gray] (6.5,8) circle (0.04cm);
\filldraw[fill=white, draw=gray] (7,8) circle (0.04cm);
\filldraw[fill=red, draw=red] (7.5,8) circle (0.04cm);
\filldraw[fill=white, draw=gray] (8,8) circle (0.04cm);
\filldraw[fill=white, draw=gray] (8.5,8) circle (0.04cm);
\filldraw[fill=white, draw=gray] (9,8) circle (0.04cm);
\filldraw[fill=red, draw=red] (9.5,8) circle (0.04cm);
\filldraw[fill=white, draw=gray] (10,8) circle (0.04cm);

\filldraw[fill=red, draw=red] (0,8.5) circle (0.04cm);
\filldraw[fill=red, draw=red] (0.5,8.5) circle (0.04cm);
\filldraw[fill=red, draw=red] (1,8.5) circle (0.04cm);
\filldraw[fill=white, draw=gray] (1.5,8.5) circle (0.04cm);
\filldraw[fill=red, draw=red] (2,8.5) circle (0.04cm);
\filldraw[fill=red, draw=red] (2.5,8.5) circle (0.04cm);
\filldraw[fill=white, draw=gray] (3,8.5) circle (0.04cm);
\filldraw[fill=white, draw=gray] (3.5,8.5) circle (0.04cm);
\filldraw[fill=white, draw=gray] (4,8.5) circle (0.04cm);
\filldraw[fill=white, draw=gray] (4.5,8.5) circle (0.04cm);
\filldraw[fill=white, draw=gray] (5,8.5) circle (0.04cm);
\filldraw[fill=red, draw=red] (5.5,8.5) circle (0.04cm);
\filldraw[fill=red, draw=red] (6,8.5) circle (0.04cm);
\filldraw[fill=white, draw=gray] (6.5,8.5) circle (0.04cm);
\filldraw[fill=red, draw=red] (7,8.5) circle (0.04cm);
\filldraw[fill=white, draw=gray] (7.5,8.5) circle (0.04cm);
\filldraw[fill=red, draw=red] (8,8.5) circle (0.04cm);
\filldraw[fill=red, draw=red] (8.5,8.5) circle (0.04cm);
\filldraw[fill=white, draw=gray] (9,8.5) circle (0.04cm);
\filldraw[fill=red, draw=red] (9.5,8.5) circle (0.04cm);
\filldraw[fill=white, draw=gray] (10,8.5) circle (0.04cm);

\filldraw[fill=white, draw=gray] (0,9) circle (0.04cm);
\filldraw[fill=red, draw=red] (0.5,9) circle (0.04cm);
\filldraw[fill=white, draw=gray] (1,9) circle (0.04cm);
\filldraw[fill=white, draw=gray] (1.5,9) circle (0.04cm);
\filldraw[fill=red, draw=red] (2,9) circle (0.04cm);
\filldraw[fill=white, draw=gray] (2.5,9) circle (0.04cm);
\filldraw[fill=white, draw=gray] (3,9) circle (0.04cm);
\filldraw[fill=white, draw=gray] (3.5,9) circle (0.04cm);
\filldraw[fill=red, draw=red] (4,9) circle (0.04cm);
\filldraw[fill=white, draw=gray] (4.5,9) circle (0.04cm);
\filldraw[fill=white, draw=gray] (5,9) circle (0.04cm);
\filldraw[fill=white, draw=gray] (5.5,9) circle (0.04cm);
\filldraw[fill=white, draw=gray] (6,9) circle (0.04cm);
\filldraw[fill=red, draw=red] (6.5,9) circle (0.04cm);
\filldraw[fill=white, draw=gray] (7,9) circle (0.04cm);
\filldraw[fill=red, draw=red] (7.5,9) circle (0.04cm);
\filldraw[fill=white, draw=gray] (8,9) circle (0.04cm);
\filldraw[fill=white, draw=gray] (8.5,9) circle (0.04cm);
\filldraw[fill=white, draw=gray] (9,9) circle (0.04cm);
\filldraw[fill=red, draw=red] (9.5,9) circle (0.04cm);
\filldraw[fill=white, draw=gray] (10,9) circle (0.04cm);

\filldraw[fill=white, draw=gray] (0,9.5) circle (0.04cm);
\filldraw[fill=red, draw=red] (0.5,9.5) circle (0.04cm);
\filldraw[fill=red, draw=red] (1,9.5) circle (0.04cm);
\filldraw[fill=white, draw=gray] (1.5,9.5) circle (0.04cm);
\filldraw[fill=red, draw=red] (2,9.5) circle (0.04cm);
\filldraw[fill=red, draw=red] (2.5,9.5) circle (0.04cm);
\filldraw[fill=white, draw=gray] (3,9.5) circle (0.04cm);
\filldraw[fill=red, draw=red] (3.5,9.5) circle (0.04cm);
\filldraw[fill=white, draw=gray] (4,9.5) circle (0.04cm);
\filldraw[fill=white, draw=gray] (4.5,9.5) circle (0.04cm);
\filldraw[fill=white, draw=gray] (5,9.5) circle (0.04cm);
\filldraw[fill=red, draw=red] (5.5,9.5) circle (0.04cm);
\filldraw[fill=red, draw=red] (6,9.5) circle (0.04cm);
\filldraw[fill=white, draw=gray] (6.5,9.5) circle (0.04cm);
\filldraw[fill=red, draw=red] (7,9.5) circle (0.04cm);
\filldraw[fill=white, draw=gray] (7.5,9.5) circle (0.04cm);
\filldraw[fill=red, draw=red] (8,9.5) circle (0.04cm);
\filldraw[fill=red, draw=red] (8.5,9.5) circle (0.04cm);
\filldraw[fill=white, draw=gray] (9,9.5) circle (0.04cm);
\filldraw[fill=red, draw=red] (9.5,9.5) circle (0.04cm);
\filldraw[fill=white, draw=gray] (10,9.5) circle (0.04cm);

\filldraw[fill=white, draw=gray] (0,10) circle (0.04cm);
\filldraw[fill=red, draw=red] (0.5,10) circle (0.04cm);
\filldraw[fill=white, draw=gray] (1,10) circle (0.04cm);
\filldraw[fill=white, draw=gray] (1.5,10) circle (0.04cm);
\filldraw[fill=red, draw=red] (2,10) circle (0.04cm);
\filldraw[fill=white, draw=gray] (2.5,10) circle (0.04cm);
\filldraw[fill=white, draw=gray] (3,10) circle (0.04cm);
\filldraw[fill=white, draw=gray] (3.5,10) circle (0.04cm);
\filldraw[fill=red, draw=red] (4,10) circle (0.04cm);
\filldraw[fill=white, draw=gray] (4.5,10) circle (0.04cm);
\filldraw[fill=white, draw=gray] (5,10) circle (0.04cm);
\filldraw[fill=white, draw=gray] (5.5,10) circle (0.04cm);
\filldraw[fill=white, draw=gray] (6,10) circle (0.04cm);
\filldraw[fill=red, draw=red] (6.5,10) circle (0.04cm);
\filldraw[fill=white, draw=gray] (7,10) circle (0.04cm);
\filldraw[fill=red, draw=red] (7.5,10) circle (0.04cm);
\filldraw[fill=white, draw=gray] (8,10) circle (0.04cm);
\filldraw[fill=white, draw=gray] (8.5,10) circle (0.04cm);
\filldraw[fill=white, draw=gray] (9,10) circle (0.04cm);
\filldraw[fill=red, draw=red] (9.5,10) circle (0.04cm);
\filldraw[fill=red, draw=red] (10,10) circle (0.04cm);

\draw[gray,thin] (4.3,1) circle (0.45cm);
\draw[gray,thin] (6.75,0.75) circle (0.45cm);
\draw[gray,thin] (9.4,1.6) circle (0.45cm);
\draw[gray,thin] (3.75,3.25) circle (0.45cm);
\draw[gray,thin] (5.5,4.2) circle (0.45cm);
\draw[gray,thin] (9.23,3.2) circle (0.45cm);
\draw[gray,thin] (1.37,5.87) circle (0.45cm);
\draw[gray,thin] (3.13,6.87) circle (0.45cm);
\draw[gray,thin] (6,6.75) circle (0.45cm);
\draw[gray,thin] (9.25,5.8) circle (0.45cm);
\draw[gray,thin] (8.9,8.1) circle (0.45cm);
\draw[gray,thin] (3.25,8.7) circle (0.45cm);
\draw[gray,thin] (6.63,8.15) circle (0.45cm);
\draw[gray,thin] (1.38,8.15) circle (0.45cm);

\draw[gray,thin] (1.45,2.4) circle (1.35cm);
\draw[-,gray,thin] (1.45,2.4) -- (2.8,2.4);
\node[scale=0.6,gray] at (2,2.2) {$R(n)$};
\node[gray] at (1.45,2.4) [thin]{.};

\end{tikzpicture}
\end{center}
\caption{Illustration of a typical environment $\omega$ in $\Omega_3\cap\Omega_4$ at a large time $n$ in $d=2$. Red sites represent traps and white sites are vacant. The smaller balls with grey boundaries represent the `large' accessible clearings of radius $r(n)=(R_0/2)(\log\rho(n))^{1/d}$. The larger ball with grey boundary represents the `huge' accessible clearing of radius $R(n)=R_0(\log n)^{1/d}-2\sqrt{d}$. Observe that since the balls with grey boundaries represent accessible clearings, they do not contain any red sites which are traps, and moreover each can be reached from the origin by a nearest neighbor path that only visits vacant sites.} \label{figure1}
\end{figure}
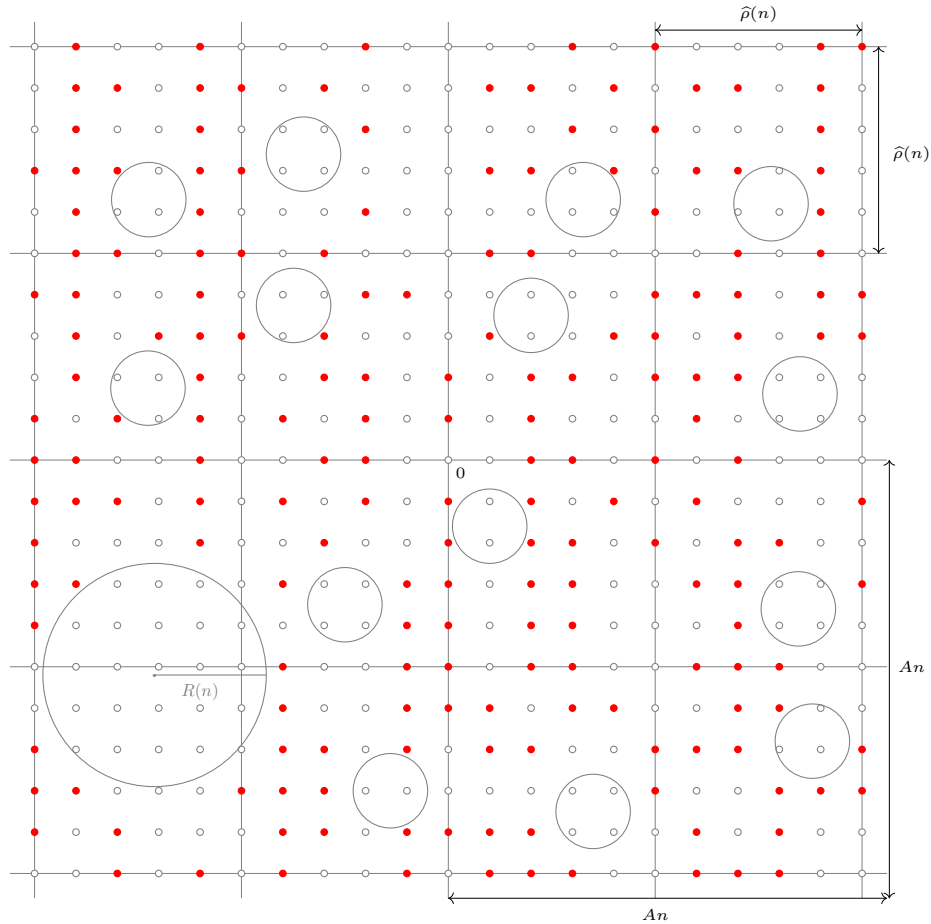

\section{Proof of the upper bound}\label{section5}

Define $\Omega_5:=\{\mathbf{0}\in\mathcal{C}\}\cap\Omega_2\cap\{\omega\in\Omega:\text{\eqref{eqexpectedmass2} holds}\}$ (see \eqref{eqomega2} and Proposition~\ref{prop3}), and let $\omega\in\Omega_5$. Note that $\mathbb{P}(\Omega_5)/\mathbb{P}(\mathbf{0}\in\mathcal{C})=1$. Recall that $\widehat{P}^\omega(\:\cdot\:):=P^\omega(\:\cdot\: \mid S)$, and denote by $\widehat{E}^\omega$ the corresponding expectation. Write 
\begin{equation} \nonumber
\widehat{E}^\omega[N_n]P^\omega(S) = E^\omega\left[N_n \mathbbm{1}_S\right] \leq E^\omega[N_n] .
\end{equation} 
It follows since $\omega\in\Omega_2$ that
\begin{equation}
\widehat{E}^\omega[N_n] \leq \frac{E^\omega[N_n]}{P^\omega(S)} . \label{sect5eq1}
\end{equation}
Let $\varepsilon>0$. By the Markov inequality,
\begin{align} 
\widehat{P}^\omega\left( N_n \geq 2^n \exp\left[\frac{(-k(d,p)+\varepsilon)n}{(\log n)^{2/d}}\right] \right) &\leq \widehat{E}^\omega[N_n]\, 2^{-n} \exp\left[\frac{(k(d,p)-\varepsilon)n}{(\log n)^{2/d}}\right] \nonumber \\
&\leq \frac{1}{P^\omega(S)} \exp\left[\frac{-k(d,p) n}{(\log n)^{2/d}}(1+o(1))\right] \exp\left[\frac{(k(d,p)-\varepsilon)n}{(\log n)^{2/d}}\right] \nonumber \\
& = \exp\left[-\varepsilon n (\log n)^{-2/d} (1+o(1))\right], \label{sect5eq2}
\end{align}
where we have used \eqref{eqexpectedmass2} and \eqref{sect5eq1} in the second inequality. 

We continue with a standard Borel-Cantelli argument. For convenience, define for $n>1$,
$$ Y_n:=(\log n)^{2/d}\left(\frac{\log N_n}{n}-\log 2\right) . $$
Let $\widehat{\Omega}$ be the sample space for the BRW, let
$$\widehat{\Omega}_0:=\{\varpi\in\widehat{\Omega}:\forall\,\varepsilon>0\:\:\exists\,n_0=n_0(\varpi)\:\text{such that}\:\forall\,n\geq n_0,\:Y_n < -k(d,p)+\varepsilon \},$$
and for $n>1$, define
$$A_n:=\left\{ Y_n \geq -k(d,p)+\varepsilon \right\}.$$
By \eqref{sect5eq2}, there exists $n_0\in\mathbb{N}$ and $c(\varepsilon)>0$ such that for all $n\geq n_0$, $\widehat{P}^\omega(A_n)\leq e^{-c(\varepsilon) n(\log n)^{-2/d}}$. Then,
\begin{equation} 
\sum_{n=2}^\infty \widehat{P}^\omega(A_n)=c_1+\sum_{n=n_0}^\infty \widehat{P}^\omega(A_n)\leq c_1+\sum_{n=n_0}^\infty e^{-c(\varepsilon) n(\log n)^{-2/d}}<\infty.  \nonumber
\end{equation}
By the Borel-Cantelli lemma, it follows that $\widehat{P}^\omega(A_n\:\text{occurs i.o.})=0$, where i.o.\ stands for \emph{infinitely often}. Choosing $\varepsilon=1/j$, this implies that for each $j\geq 1$, we have
$$\widehat{P}^\omega(\widehat{\Omega}_j)=1,\quad \widehat{\Omega}_j:=\{\varpi\in\widehat{\Omega}:\exists\,n_0=n_0(\varpi)\:\text{such that}\:\forall\,n\geq n_0,\:Y_n < -k(d,p)+1/j \}. $$
Then, $\widehat{P}^\omega(\widehat{\Omega}_0)=\widehat{P}^\omega(\cap_{j\geq 1}\widehat{\Omega}_j)=1$, which implies that $\mathbb{P}$-a.s.\ on the set $\{\mathbf{0}\in\mathcal{C}\}$,
\begin{equation} 
\underset{n\rightarrow\infty}{\limsup}\, (\log n)^{(2/d)}\left(\frac{\log N_n}{n}-\log 2\right)\leq -k(d,p) \quad\:\: \widehat{P}^\omega\text{-a.s.} \nonumber
\end{equation} 
This completes the proof of the upper bound of Theorem~\ref{thm1}.  \qed

\section{Proof of the lower bound}\label{section6}

The proof of the lower bound of Theorem~\ref{thm1} can be split into four parts, where the first part consists of establishing a quenched environment in which the SLLN will hold. We have completed the first part in Section~\ref{section4} by constructing $\Omega_0$ (see \eqref{eqomega0}). In the remaining parts, we show that the SLLN in \eqref{eqthm1} holds on the set $\Omega_0\cap\{\mathbf{0}\in\mathcal{C}\}$. For each $\omega\in\Omega_0\cap\{\mathbf{0}\in\mathcal{C}\}$ with overwhelming $\widehat{P}^\omega$-probability, we show in part two that at least one particle of the BRW hits a \emph{large} clearing soon enough, and in part three that in the remaining time the desired growth of mass is realized within a \emph{huge} clearing. In part four, we pass to the strong law using a Borel-Cantelli argument.

The pattern of proof, that is, what is shown in each part of the proof, is similar to that of previous works (see \cite{O2021} and \cite{O2024}); nonetheless, the method of proof in part two (see Section~\ref{part2}) is different. Here, we exploit the conditioning on $S_n$ and analyze many ancestral line segments over their respective, possibly overlapping, suitable time intervals. We emphasize that the most challenging part of the proof is part two, where we show that even in the presence of hard traps, with overwhelming probability the BRW is able to travel to a large clearing in which it can produce exponentially many particles. The main difficulty stems from the fact that the process lives in $\mathcal{C}$; therefore, the BRW has no access to possible large trap-free regions outside $\mathcal{C}$ in which the mass could otherwise grow. This also makes the construction of the quenched environment more delicate (see Section~\ref{section4}). 

%Instead of analyzing a single ancestral line over many nonoverlapping time intervals of suitable length within $[0,t]$ (see \cite[Lemma 5.1]{O2021}) or analyzing different ancestral line segments over nonoverlapping time intervals of suitable length within $[0,t]$ and using repeated conditioning on the events of survival up to the end of each of these time intervals (see \cite[Lemma 5.1]{O2024})
 
\begin{assumption}
Throughout Section~\ref{section6}, we assume that $\omega\in\Omega_0\cap\{\mathbf{0}\in\mathcal{C}\}$, where $\Omega_0$ is as defined in \eqref{eqomega0} a set of full $\mathbb{P}$-measure. Also, we will refer to an $\omega\in\Omega_0 \cap \{\mathbf{0}\in\mathcal{C}\}$ as a \emph{typical environment}.
\end{assumption}

\subsection{Hitting a large clearing} \label{part2}

We start by making critical use of the conditioning on $S_n$, which is the event of survival of the BRW up to time $n$. The argument below hinges on the fact that production of `few' particles over $[0,n]$ is very unlikely on the event of survival up to $n$. For $n\geq 1$, define the following integer-valued random variables associated to a BRW among Bernoulli traps:
\begin{align}
F_n &:= \#\{ \text{occurrences of fission up to time $n$} \}, \nonumber \\
T_n &:= \#\{ \text{particles that have hit $K=K(\omega)$ up to time $n$} \}, \nonumber \\  
\Sigma_n &:= N_n + T_n ,  \nonumber
\end{align}
where $N_n$ is as before the mass, i.e., number of particles, of the BRW at time $n$. Due to binary branching, it is clear that 
\begin{equation}
\Sigma_n = F_n + 1 .  \label{eq00}
\end{equation}
Note that in the absence of traps, we would have $N_n=F_n+1=2^n$. For a branching random walk $Z=(Z_n)_{n\geq 0}$, define the \emph{range} process $\mathcal{R}=(\mathcal{R}(n))_{n\geq 0}$ as
$$ \mathcal{R}(n) = \bigcup_{0\leq k\leq n}\text{supp}(Z_n) , $$
that is, $\mathcal{R}(n)$ is the set of visited sites up to $n$, and for $n\geq 1$, let the radius of the minimal ball centered at the origin containing $\mathcal{R}(n)$ be
\begin{equation}
M_n = \inf\{r>0:B(0,r)\supseteq \mathcal{R}(n)\}  . \nonumber
\end{equation} 
Recall from \eqref{eqrhot} the definition of $\rho$, and define $r:\mathbb{N}_{\geq n_0}\to\mathbb{R}_+$ by 
\begin{equation} \label{eqrt2}
r(n) = (R_0/2)\left(\log \rho(n)\right)^{1/d} ,
\end{equation}
where $n_0$ is such that $\rho(n)>1$ for all $n\geq n_0$. Next, define for each $\omega\in\Omega$ and $n\geq n_0$, the special subsets of sites
\begin{equation} \label{eqphit}
\Phi(\omega,n)=\{x\in\mathbb{Z}^d : B(x,r(n))\cap\mathbb{Z}^d\subseteq K^c(\omega)\}, \:\:\quad\:\: \widehat{\Phi}(\omega,n)=\Phi(\omega,n)\cap [-An,An]^d 
\end{equation}
in $\mathbb{Z}^d$, where $A>0$ is as in \eqref{eqomega3} and \eqref{eqomega4}. Next, for $n\geq n_0$, define the events
\begin{equation} \label{eqet}
E_n=\left\{\mathcal{R}(n)\cap\widehat{\Phi}(\omega,n)=\emptyset\right\} .  
\end{equation}
Observe that $E_n$ is the event that the BRW does not hit clearings of a certain size within $[-An,An]^d$ up to time $n$. In this part of the proof, we find a suitable upper bound on $P^\omega(E_n \mid S_n)$ for large $n$ in a typical environment $\omega$.

For $0<\theta<1$, we start by conditioning on $\Sigma_{\lfloor\theta n\rfloor}$ as follows. Recall the definition of $k(d,p)$ from \eqref{constant}, choose a constant $k_3$ such that $0<k_3<\theta\, k(d,p)$, and fix this constant. Define for $n>1$, 
\begin{equation} \label{eqct}
\alpha_n := \left\lfloor \exp\left(\frac{k_3 n}{(\log n)^{2/d}}\right)\right\rfloor ,
\end{equation} 
and write 
\begin{align}
P^\omega(E_n\mid S_n) &= \frac{1}{P^\omega(S_n)}
 \left[P^\omega\left(E_n\cap S_n \cap \left\{\Sigma_{\lfloor\theta n\rfloor}\geq \alpha_n\right\}\right) + P^\omega\left(E_n\cap S_n\cap\left\{\Sigma_{\lfloor\theta n\rfloor} < \alpha_n\right\}\right)\right] \nonumber \\  
&\leq \frac{1}{P^\omega(S_n)} \left[P^\omega\left(E_n\cap \left\{\Sigma_{\lfloor\theta n\rfloor}\geq \alpha_n\right\}\right) + P^\omega\left(S_n\cap\left\{\Sigma_{\lfloor\theta n\rfloor} < \alpha_n\right\}\right)\right] . \label{eq40}
\end{align}
Under the law $P^\omega$, since each ancestral line is a random walk up until it hits $K$ if it ever hits $K$, the second term on the right-hand side of \eqref{eq40} can be bounded from above as 
\begin{equation} 
P^\omega\left(S_n\cap\left\{\Sigma_{\lfloor\theta n\rfloor}< \alpha_n\right\}\right) \leq \alpha_n\mathbf{P}_0^\omega\left(\cup_{0\leq k\leq \lfloor\theta n\rfloor}\{X_k\}\cap K=\emptyset\right), \nonumber
\end{equation}
where $(X_k)_{k\geq 0}$ denotes a random walk as before, and we have used that by \eqref{eq00} the process $\Sigma_k$ is nondecreasing in $k$. From the quenched survival asymptotics for a random walk among hard Bernoulli traps in \eqref{eqasymptotics}, for a.e.\ $\omega$ on the set $\{\mathbf{0}\in\mathcal{C}\}$, we then have
\begin{equation} \label{eq41new}
P^\omega\left(S_n\cap\left\{\Sigma_{\lfloor\theta n\rfloor}< \alpha_n\right\}\right) \leq \exp\left[-\frac{(\theta k(d,p)-k_3)n}{(\log n)^{2/d}}(1+o(1))\right] .
\end{equation}
In the rest of the current part of the proof, we find a quenched upper bound that is valid for large $n$ on the first term on the right-hand side of \eqref{eq40}.

%Recall that a free BRW refers to the model where there is no traps and the total mass of the BRW on $\mathbb{Z}^d$ is $2^n$  at time $n$. The following stochastic domination follows since the presence of traps can only kill particles (and hence also prevents them from branching), and otherwise has no effect on the motion of particles. As before, we use $(P_y:y\in\mathbb{R}^d)$ for the laws of a free BRW starting with a single particle at $y\in\mathbb{Z}^d$.
%\begin{remark}[Comparison, free environment versus hard killing]
%For $n>0$ and $B\subseteq\mathbb{Z}^d$, let $Z_n(B)$ denote the mass of $Z$ on $B$ at time $n$. Then, for all $y\in\mathbb{Z}^d$, $B\subseteq \mathbb{Z}^d$, $j\in\mathbb{N}$, and $n>0$,
%\begin{equation} \label{eqcomp1}
%P_{y}(Z_n(B)<j) \leq P_{y}^\omega(Z_n(B)<j) \quad \text{for each $\omega\in\Omega$} .
%\end{equation}
%\end{remark}
%Then, for any $r>0$, it follows by taking $B=\{x\in\mathbb{Z}^d : x \notin B(0,r)\}$ and $j=1$ in \eqref{eqcomp1} that
%\begin{equation} \label{eqcomp2}
%P^\omega(M_n> r) \leq  P(M_n> r),
%\end{equation}
%where $M_n$ is as defined in \eqref{eqmt}. 

Let $\tau_1\leq \tau_2\leq \tau_3\leq \ldots$ denote the fission times of the BRW until time $\lfloor\theta n\rfloor$. Note that this list is almost surely finite for each fixed $n>0$ and that each $\tau_j$ is a stopping time. When more than one fission occurs at the same time; among these occurrences, the indexing of the $\tau_j$ can be done in any arbitrary order. The equality of events $\left\{\tau_{\alpha_n-1}\leq \lfloor\theta n\rfloor\right\}=\left\{\Sigma_{\lfloor\theta n\rfloor}\geq \alpha_n\right\}$ follows from \eqref{eq00}. We start by estimating the first term on the right-hand side of \eqref{eq40} as  
\begin{align}
P^\omega\left(E_n \cap \left\{\Sigma_{\lfloor\theta n\rfloor}\geq \alpha_n\right\}\right)  &= P^\omega(E_n \cap \left\{\Sigma_{\lfloor\theta n\rfloor}\geq \alpha_n\right\} \cap \{M_{\tau_{\alpha_n-1}} > An \})  \nonumber \\
&  \quad + P^\omega(E_n \cap \left\{\Sigma_{\lfloor\theta n\rfloor}\geq \alpha_n\right\} \cap \{M_{\tau_{\alpha_n-1}} \leq An \}) . \label{eq43}
\end{align}
At each time of fission, independently of everything else in the model, randomly label the two offspring as $L$ for left and $R$ for right. For each $j\geq 1$, let $x_j\in\mathbb{Z}^d$ be the position at which $j$th fission occurs, and consider the two ancestral line segments (see Definition~\ref{def2}), both started at the space-time point $(x_j, \tau_j)$; one initiated by the $L$-offspring born at $j$th fission and always continuing with an $L$-offspring at each subsequent time of fission along this ancestral line segment until the minimum of times $\lfloor\theta n\rfloor$ and first hitting to $K$, and the other initiated by the $R$-offspring born at $j$th fission and similarly continuing with $L$-offsprings. Let us label the two aforementioned ancestral line segments as $(L\mathbf{L})_j(\theta n)$ and $(R\mathbf{L})_j(\theta n)$, where $\mathbf{L}$ should be understood as a string of $L$s with length of the string being equal to the number of occurrences of subsequent fissions along the ancestral line segment. If there is no subsequent occurrence of fission along the ancestral line segment, then $\mathbf{L}$ corresponds to an empty string of $L$s. On the event $\left\{\tau_{\alpha_n-1}\leq \lfloor\theta n\rfloor\right\}$, consider the $2(\alpha_n-1)$ ancestral line segments of the type described above:  
\begin{equation}
(L\mathbf{L})_1(\theta n), (R\mathbf{L})_1(\theta n), (L\mathbf{L})_2(\theta n), (R\mathbf{L})_2(\theta n), \ldots, (L\mathbf{L})_{\alpha_n-1}(\theta n), (R\mathbf{L})_{\alpha_n-1}(\theta n). \label{eqancestral} 
\end{equation}
Observe that exactly $\alpha_n$ of them, precisely those where $\mathbf{L}$ in the notation corresponds to an empty string of $L$s, are independent of each other given their starting points. Extend these $\alpha_n$ ancestral line segments, as described in Section~\ref{twocolor}, in a unique way to be defined on the time interval $[0,\tau_{\alpha_n-1}]$ unless they hit $K$ earlier than $\tau_{\alpha_n-1}$. Some of them may have ended earlier due to hitting $K$. Since each of these ancestral line segments moves as a random walk while alive, the first term on the right-hand side of \eqref{eq43} is bounded from above by
\begin{align}
 P^\omega\left( \left\{\Sigma_{\lfloor\theta n\rfloor}\geq \alpha_n \right\} \cap \{M_{\tau_{\alpha_n-1}} > An \}\right) &\leq \alpha_n \mathbf{P}_0^\omega\left(\sup_{0\leq k\leq \lfloor\theta n\rfloor} |X_k|\geq An\right) \nonumber \\
&  \leq \alpha_n \mathbf{P}_0\left(\sup_{0\leq k\leq \lfloor\theta n\rfloor} |X_k|\geq An\right)  \nonumber \\
&  = \alpha_n c_1 \exp\left[-\frac{c_2 A^2 n}{\theta}\right] = \exp\left[-\frac{c_2 A^2 n}{\theta}(1+o(1))\right] , \label{eq44}
\end{align}
where $c_1$ and $c_2$ are positive constants, $\mathbf{P}_0^\omega$ denotes the law under which $X=(X_k)_{k\geq 0}$ is a random walk starting at the origin in the environment $\omega$, we have used the union bound and that $\left\{\Sigma_{\lfloor\theta n\rfloor}\geq \alpha_n\right\}=\left\{\tau_{\alpha_n-1}\leq \lfloor\theta n\rfloor\right\}$ in the first inequality, and used \cite[Proposition 2.1.2]{LL2010} to estimate $\mathbf{P}_0\left(\sup_{0\leq k\leq \lfloor\theta n\rfloor} |X_k| \geq An \right)$.

We now find a quenched upper bound that is valid for large $n$ on 
\begin{equation} \label{eqrealdeal} 
P^\omega(E_n \cap \left\{\Sigma_{\lfloor\theta n\rfloor}\geq \alpha_n\right\} \cap \{M_{\tau_{\alpha_n-1}} \leq An \})    , \end{equation}
which is the second term on the right-hand side of \eqref{eq43}. We emphasize that towards our goal of estimating $P^\omega(E_n \mid S_n)$, thus far we have been carefully `throwing away' some unwanted unlikely events (see \eqref{eq41new} and \eqref{eq44}). The key estimate will be that of \eqref{eqrealdeal}, which is the only probability involving the event $E_n$ and is directly estimated in this section. 

Recall from \eqref{eqrhot} that 
$$ \rho(n)=k_2\frac{n}{(\log n)^{2/d}} , \quad k_2>0 , \quad n\geq 3. $$
Recall that $x_j$ denotes the site at which $j$th fission occurs, and consider the box $x_j+[-\rho(n),\rho(n)]^d$. On the event $\{M_{\tau_{\alpha_n-1}} \leq An \}$, it is clear that $x_j\in[-An,An]^d$ for each $j$, $1\leq j\leq \alpha_n-1$. Then, since $\omega\in\Omega_0\subseteq\Omega_3$, by definition of $\Omega_3$ (see \eqref{eqomega3}), there exists $n_3=n_3(\omega)$ such that for each $n\geq n_3$ and each $j$, there exists $y_j$ in $x_j+[-\rho(n),\rho(n)]^d$ such that $B(y_j,(R_0/2)(\log \rho(n))^{1/d})$ is an accessible clearing. Recall that for $x,y\in\mathcal{C}$, $d_\mathcal{C}(x,y)$ denotes the minimal length of a nearest neighbor path joining $x$ and $y$ such that the entire path is in $\mathcal{C}$. Then, since $\omega\in\Omega_0\subseteq\Omega_1$ (see \eqref{eqomega1}) and both $x_j, y_j\in\mathcal{C}$, it follows from \eqref{distanceestimate} that for each $j$ and all large $n$,  
\begin{equation} \label{eqrealdeal2}
d_\mathcal{C}(x_j,y_j) = d_\mathcal{C}(x_j,y_j)(n)   \leq \lfloor 2\psi d \rho(n)\rfloor  =: h(n).    
\end{equation}
On the event $\left\{\Sigma_{\lfloor\theta n\rfloor}\geq \alpha_n\right\}$, for $1\leq j\leq \alpha_n-1$, consider the ancestral line segments $(L\mathbf{L})_j(\theta n)$ and $(R\mathbf{L})_j(\theta n)$ over the interval $[\tau_j,\lfloor\theta n\rfloor]$ from the list in \eqref{eqancestral}. Restrict them to the period $[\tau_j, \tau_j+h(n)]$ if $\tau_j+h(n)\leq \lfloor\theta n\rfloor$ and extend them to the period $[\tau_j, \tau_j+h(n)]$ if $\tau_j+h(n)>\lfloor\theta n\rfloor$ similarly as before by always choosing the $L$-offspring at each subsequent time of fission along the ancestral line. Note that for $1\leq j\leq \alpha_n-1$, in any case $\tau_j+h(n)\leq n$ for all large $n$ since $0<\theta<1$ is fixed and $\tau_j\leq \lfloor\theta n\rfloor$ on the event $\left\{\Sigma_{\lfloor\theta n\rfloor}\geq \alpha_n\right\}$. Then, since each ancestral line moves as a random walk, by \eqref{eqrealdeal2}, the probability that each of the two ancestral line segments mentioned above starting from site $x_j$ at time $\tau_j$ hits $y_j$ over the period $[\tau_j,\tau_j+h(n)]$ (before hitting $K$) is bounded below by  
\begin{equation} \nonumber
\left(\frac{1}{2d}\right)^{h(n)} \geq e^{-C\rho(n)}, \quad\quad C=C(p,d):= 2\psi d \log (2d)>0 . 
\end{equation}
Since exactly $\alpha_n$ of the ancestral line segments listed in \eqref{eqancestral} are independent of each other given their starting points, for all large $n$,
\begin{equation} \label{eqlowerbound2}
P^\omega(E_n \cap \left\{\Sigma_{\lfloor\theta n\rfloor}\geq \alpha_n\right\} \cap \{M_{\tau_{\alpha_n-1}} \leq An \})\leq 
 \left(1-e^{-C \rho(n)}\right)^{\alpha_n} \leq \exp\left[-e^{-C \rho(n)}\alpha_n\right], 
\end{equation} 
where we have used the elementary estimate $1+x\leq e^x$. Now choose $k_2$ and $k_3$, respectively, in the definitions of $\rho(n)$ and $\alpha_n$ so that $C k_2<k_3$. (So far, we have only required that $k_3<\theta k(d,p)$.) Then, the upper bound in \eqref{eqlowerbound2} can further be bounded from above by
$$  \exp\left[-e^{\frac{(k_3-C k_2)n}{(\log n)^{2/d}}}\right]   , $$
which is superexponentially small in $n$ for large $n$. This, together with the estimates \eqref{eq40}, \eqref{eq41new}, \eqref{eq43} and \eqref{eq44}, imply that for $\omega\in\Omega_0\cap\{\mathbf{0}\in\mathcal{C}\}$,
\begin{align} 
P^\omega(E_n\mid S_n) &\leq \frac{1}{P^\omega(S_n)}\left(\exp\left[-\frac{c_2 A^2 n}{\theta}(1+o(1))\right]+\exp\left[-e^{-\frac{(k_3-C k_2)n}{(\log n)^{2/d}}}\right]\right) \nonumber \\
& \quad + \frac{1}{P^\omega(S_n)} \exp\left[-\frac{(\theta k(d,p)-k_3)n}{(\log n)^{2/d}}(1+o(1))\right] \label{eqfinale}
\end{align}
for all large $n$, where $0<\theta<1$. Recall that $\Omega_0\subseteq\Omega_2$, which implies that $P^\omega(S_n)$ is bounded away from zero for all $n$. Then, \eqref{eqfinale} gives
\begin{equation} \label{eqfinale2}
P^\omega(E_n \mid S_n) \leq \exp\left[-\frac{(\theta k(d,p)-k_3)n}{(\log n)^{2/d}}(1+o(1))\right] .
\end{equation} 
This completes the second part of the proof of the lower bound of Theorem~\ref{thm1}. We emphasize that $k_3<\theta\, k(d,p)$ by choice in the definition of $\alpha_n$ in \eqref{eqct}. In view of \eqref{eqrt2}-\eqref{eqet}, the estimate in \eqref{eqfinale2} says: for each $\omega\in\Omega_0 \cap \{\mathbf{0}\in\mathcal{C}\}$, under the law $P^\omega( \cdot\: \mid S_n)$ with overwhelming probability, at least one particle of BRW hits a \emph{large} clearing within the box $[-An,An]^d$ over $[0,n]$ for large $n$. So far, $A>0$ is arbitrary.

\subsection{The bootstrap argument on growth of mass} \label{bootstrap}

Let $m:\mathbb{N}_+\to\mathbb{N}_+$ be such that 
\begin{equation}
m(n) = o(n(\log n)^{-2/d}) \quad \text{and} \quad \lim_{n\to\infty}\frac{\log n}{\log m(n)}=1  . \label{eqmt1}
\end{equation}  
In what follows, $m(n)$ will be used as a suitable time scale. We will show in succession that in a typical environment, with overwhelming probability for large times the BRW
\begin{itemize}
	\item[(1)] hits a clearing of radius $r(m(n))=(R_0/2)[\log \rho(m(n))]^{1/d}$ and produces at least $e^{\delta_1 m(n)}$ particles with $0<\delta_1<\log 2$ within this large clearing over the period $[0,m(n)]$,
	\item[(2)] hits a clearing of radius $R(m(n))=R_0[\log m(n)]^{1/d}-2\sqrt{d}$ over the period $[m(n),\lfloor(1+\delta_2) m(n)\rfloor]$ with $\delta_2$ small enough,
	\item[(3)] produces sufficiently many particles inside the clearing in (2) over the period $[\lfloor(1+\delta_2) m(n)\rfloor,n]$.
\end{itemize}

%(See Figure~\ref{figure2}.)

\bigskip

\textbf{\underline{Part 1}: Growth inside the large clearing}

\medskip

Recall  $\rho(n)$ from \eqref{eqrhot} and from \eqref{eqrt2} that
\begin{equation} %\label{eqrt3}
r(n) = (R_0/2)\left(\log \rho(n)\right)^{1/d}   \nonumber
\end{equation}
for $n\geq n_0$, where $n_0$ was chosen so that $\rho(n_0)>1$. We will show that for any $0<\delta_1<\log 2$, $\mathbb{P}$-a.s.\ on the set $\{\mathbf{0}\in\mathcal{C}\}$ the event $F_{m(n)}$ occurs with overwhelming $\widehat{P}^\omega$-probability, where 
\begin{equation} \label{eqat2}
F_n := \{\exists\:z_0=z_0(\omega)\in[-An,An]^d\cap\mathbb{Z}^d \:\:\text{such that}\:\: Z_n(B(z_0,r(n)))\geq e^{\delta_1 n}\}.
\end{equation}

Let $0<\delta_1<\log 2$, choose $\lambda\in\mathbb{R}$ satisfying $0<\lambda<1-\delta_1/\log 2$, and set
$$ \lambda_n = \lfloor\lambda n\rfloor .   $$
We will first show that in a typical environment $\omega$, with overwhelming probability a particle of the BRW hits a point, say $z_0$, in $\widehat{\Phi}(\omega,\lambda_n)$ (see \eqref{eqphit}) over the interval $[0,\lambda_n]$, and then the sub-BRW emanating from this particle produces at least $e^{\delta_1 n}$ particles over $[\lambda_n,n]$ inside $B(z_0,r(n))$. Recall from \eqref{eqet} that 
$$ E_n=\{\mathcal{R}(n)\cap\widehat{\Phi}(\omega,n)=\emptyset\}  . $$  
Estimate
\begin{align} 
P^\omega\left(F_n^c \cap S_{\lambda_n}\right) &= P^\omega\left(F_n^c \cap S_{\lambda_n} \cap E_{\lambda_n}\right) + P^\omega\left(F_n^c \cap S_{\lambda_n} \cap E_{\lambda_n}^c\right) \nonumber \\
&\leq P^\omega\left(E_{\lambda_n} \mid S_{\lambda_n} \right) + P^\omega\left(F_n^c \mid E_{\lambda_n}^c\right) . \label{eqnewnew}
\end{align}
By \eqref{eqfinale2}, for each $\omega\in\Omega_0 \cap \{\mathbf{0}\in\mathcal{C}\}$,
\begin{equation} \label{part1eq6}
P^\omega\left(E_{\lambda_n} \mid S_{\lambda_n}\right) \leq \exp\left[-\frac{(\theta k(d,p)-k_3)\lambda_n}{(\log \lambda_n)^{2/d}}(1+o(1))\right].
\end{equation}

Now let $\tau=\tau(\omega)=\inf\{k\geq 0:\mathcal{R}(k)\cap \widehat{\Phi}(\omega,\lambda_n)\neq\emptyset\}$ be the first hitting time of $Z$ to $\widehat{\Phi}(\omega,\lambda_n)$. Let $X_1$ be the ancestral line of $Z$ that first hits $\widehat{\Phi}(\omega,\lambda_n)$, set $z_0=X_1(\tau)$, and denote by $\widetilde{Z}$ the sub-BRW initiated at time $\tau$ by $X_1$. We will work under the law $P^\omega(\:\cdot\: | E_{\lambda_n}^c)=P^\omega(\:\cdot\: | \,\tau\leq\lambda_n)$. By the strong Markov property, $\widetilde{Z}$ is a BRW in its own right. For $n\geq n_0$, let  
\begin{equation} \label{eqlopez}
k:=n-\lambda_n,\quad \widetilde{r}(k)=\frac{R_0}{2}\left[\log \rho\left(\frac{\lambda k}{2(1-\lambda)}\right)\right]^{1/d}, \quad B_k:=B(0,\widetilde{r}(k)).
\end{equation}
For $u\in\mathbb{N}$, let $\big|  \widetilde{Z}_{u}^{B(z_0,\widetilde{r}(k))}  \big|$ denote the number of particles at time $\tau+u$ of $\widetilde{Z}$ whose ancestral lines over $[\tau,\tau+u]$ do not exit $B(z_0,\widetilde{r}(k))$. Observe that $\widetilde{r}(k)\leq r(\lambda_n)$ for all large $k$, which implies $B(z_0,\widetilde{r}(k))\subseteq B(z_0,r(\lambda_n))$, and that $B(z_0,r(\lambda_n))$ is a clearing by definition of $\widehat{\Phi}(\omega,\lambda_n)$. Moreover, $\delta_1 n<(n-\lambda_n)\log 2=k\log 2$ due to the choice $\lambda<1-\delta_1/\log 2$. Next, let 
$$ p_k:=\mathbf{P}_{z_0}(\sigma_{B(z_0,\widetilde{r}(k))}\geq k) = \mathbf{P}_0\left(\sigma_{B_k}\geq k\right) , $$
where, as before, $\mathbf{P}_x$ denotes the law of a generic random walk $(X_k)_{k\geq 0}$ started at $x\in\mathbb{Z}^d$, and $\sigma_A=\inf\{k\geq 0:X_k\notin A\}$ denotes the first exit time of the random walk out of $A$. Now, with $\widetilde{r}(k)$ and $B_k$ as in \eqref{eqlopez}, noting that $B(z_0,\widetilde{r}(k))$ is a clearing and setting $\gamma_k=\exp[-\widetilde{r}(k)]$, Proposition~\ref{prop4} implies that for all large $k$,
\begin{align} 
P^\omega\left(\big| \widetilde{Z}_{n-\tau}^{B(z_0,\widetilde{r}(k))} \big|<e^{-\widetilde{r}(k)} p_k 2^k \,\big|\, E_{\lambda_n}^c \right) &\leq P^\omega\left(\big| \widetilde{Z}_{n-\tau}^{B(z_0,\widetilde{r}(k))} \big|<e^{-\widetilde{r}(n-\tau)} p_{n-\tau} 2^{n-\tau} \,\big|\, E_{\lambda_n}^c \right) \nonumber \\
&\leq \sup_{k\leq j\leq n} P\left(Y_j< e^{-\widetilde{r}(j)} p_j 2^j\right) \leq e^{-c\,\widetilde{r}(k)}, \label{part1eq7}
\end{align} 
where $c$ is a positive constant, $Y_j$ denotes the number of particles at time $j$ of a standard BRW whose ancestral lines up to $j$ do not exit $B_j$, we have used that $e^{-\widetilde{r}(j)} p_j 2^j$ is increasing for all large $j$ and that $n-\tau\geq n-\lambda_n=k$ in in the first inequality, and that $ \widetilde{r}$ is an increasing function in the last inequality. The fact that  $e^{-\widetilde{r}(j)} p_j 2^j$ is increasing for large $j$ follows from \eqref{eqlopez} and \eqref{eq5prop4}, which, respectively, imply that $\widetilde{r}(j)=o(j)$ and $p_j=e^{o(j)}$. We then further conclude that  $e^{\delta_1 n}\leq e^{-\widetilde{r}(k)} p_k 2^k$ for all large $n$ since $\delta_1 n<k\log 2$. Then, by \eqref{part1eq7}, 
\begin{equation} \label{part1eq8}
P^\omega\left(F_n^c \mid E_{\lambda_n}^c\right) \leq P^\omega\left(\big|  \widetilde{Z}_{n-\tau}^{B(z_0,\widetilde{r}(k))}  \big|<e^{-\widetilde{r}(k)}p_k 2^k \big| E_{\lambda_n}^c \right) \leq e^{-c\,\widetilde{r}(k)}, 
\end{equation}
where $F_n$ is as in \eqref{eqat2}. In view of $k=n-\lambda_n$, and recalling $\widetilde{r}$ from \eqref{eqlopez}, we reach the following conclusion via \eqref{eqnewnew}, \eqref{part1eq6} and \eqref{part1eq8}: for each $\omega\in\Omega_0 \cap \{\mathbf{0}\in\mathcal{C}\}$, for all large $n$,
\begin{equation} \nonumber
P^\omega\left(F_n^c \mid S \right)=\frac{P^\omega(F_n^c\cap S)}{P^\omega(S)}\leq c(\omega) P^\omega(F_n^c\cap S) \leq c(\omega) P^\omega(F_n^c\cap S_{\lambda_n}) \leq e^{-c(\log n)^{1/d}}      ,
\end{equation}
where $c=c(d,p)$ is a positive constant, we have used Theorem~\ref{thm2} in the first inequality, and that $S\subseteq S_{k}$ for any $k\geq 0$ in the second inequality. Finally, since $\lim_{n\to\infty}\frac{\log n}{\log m(n)}=1$ by assumption, it follows that for all large $n$,
\begin{equation}\label{part1eq9}
P^\omega\left(F_{m(n)}^c \,\big|\ S \right) \leq e^{-c(\log n)^{1/d}} .    
\end{equation}

% Conditional on the event $E_{\lambda_n}^c=\{\tau\leq\lambda_n\}$

\bigskip

\textbf{\underline{Part 2}: Hitting a huge clearing}

\medskip

Let $R:\mathbb{N}_{\geq 2}\to\mathbb{R}$ be defined by
\begin{equation} \label{eqpart21}
R(n)= R_0(\log n)^{1/d}-2\sqrt{d} .
\end{equation}
Observe that $R(n)\asymp R_0(\log n)^{1/d}$ so that a clearing of radius $R(n)$ is a huge clearing in time $n$ (see Definition~\ref{huge}). In this part of the proof, we will work under the law $P^\omega\left(\:\cdot\:\mid F_{m(n)}\right)$. We start by recalling $r(n)$ from \eqref{eqrt2} and that for $0<\delta_1<\log 2$,
\begin{equation} \label{eqat22}
F_{m(n)} := \{\exists\:z_0=z_0(\omega)\in[-Am(n),Am(n)]^d\cap\mathbb{Z}^d \:\:\text{s.t.}\:\: Z_{m(n)}(B(z_0,r(m(n))))\geq e^{\delta_1 m(n)}\} .
\end{equation}
Denote by $\mathbf{e}_1$ the unit vector in the direction of the first coordinate. We will show that for a suitable choice of $\delta_2>0$, $\mathbb{P}$-a.s.\ on the set $\{\mathbf{0}\in\mathcal{C}\}$, the event
\begin{equation} \label{eqgt}
G_n:= \{\exists\: x_0(\omega)\in\mathbb{Z}^d \:\:\text{s.t.}\:\:  Z_{\lfloor(1+\delta_2)m(n)\rfloor}(\{x_0,x_0+\mathbf{e}_1\}) >0 \:\:\text{and}\:\:B(x_0,R(m(n)))\cap\mathbb{Z}^d\subseteq K^c \}   
\end{equation}
occurs with overwhelming $P^\omega\left(\:\cdot\:\mid F_{m(n)}\right)$-probability. We refer the reader to Figure~\ref{figure2} for an illustration of a snapshot at time $m(n)$ related to the strategy described in this part.

It follows from $\omega\in\Omega_0\cap\{\mathbf{0}\in\mathcal{C}\}$ that $\omega\in\Omega_1\cap\Omega_4$. By definition of $\Omega_4$ in \eqref{eqomega4} and since $\lim_{n\to\infty}m(n)=\infty$, there exists $n_4=n_4(\omega)$ such that for all $n\geq n_4$ there exists $x_0=x_0(\omega)\in\mathbb{Z}^d$ such that $B(x_0,R_0(\log m(n))^{1/d}-2\sqrt{d})\cap\mathbb{Z}^d\subseteq [-Am(n),Am(n)]^d\cap \mathcal{C}$. Conditional on $F_{m(n)}$, let $z_0$ be as in \eqref{eqat22} and consider a particle of $Z_{m(n)}(B(z_0,r(m(n))))$. Call this particle $u$ generically, and let $q_u(n)$ be the probability that the sub-BRW initiated by $u$ at time $m(n)$ contributes a particle to the set of sites $\{x_0,x_0+\mathbf{e}_1\}$ at time $\lfloor(1+\delta_2)m(n)\rfloor$, where $\delta_2>0$ is defined as
$$ \delta_2 =    4 A d \psi  . $$
Here, $\psi$ is as in \eqref{distanceestimate}. For a lower bound on $q_u(n)$, neglect the branching of the sub-BRW started by $u$ at time $m(n)$, and consider only a single ancestral line of this sub-BRW. Using that $\omega\in\Omega_1$ (see \eqref{eqomega1}), and that $\max_{y\in\{x_0,x_0+\mathbf{e}_1\}}||X_u(m(n))-y|| \leq 2 d A m(n)$, it then follows that uniformly over all $u$ such that $X_u(m(n))\in B(z_0,r(m(n)))$,
\begin{equation} \label{eqpart22}
q_u(n) \geq \left(\frac{1}{2d}\right)^{\lfloor \delta_2 m(n)\rfloor} =: p(n) .
\end{equation} 
If the sub-BRW started by $u$ at time $m(n)$ hits $\{x_0,x_0+\mathbf{e}_1\}$ before time $\lfloor(1+\delta_2)m(n)\rfloor$, we may suppose that the relevant ancestral line bounces back and forth between $x_0$ and $x_0+\mathbf{e}_1$ until time $\lfloor(1+\delta_2)m(n)\rfloor$. Then, by the Markov property and the independence of particles present at time $m(n)$,
\begin{equation} \nonumber
P^\omega(G_n^c \mid F_{m(n)}) \leq (1-p(n))^{\left\lceil e^{\delta_1 m(n)}\right\rceil} .
\end{equation}
Using the estimate $1+x\leq e^x$ and the expression for $p(n)$ from \eqref{eqpart22}, we conclude that for each $\omega\in\Omega_0 \cap \{\mathbf{0}\in\mathcal{C}\}$, for all large $n$,
\begin{equation} \label{eqpart24}
P^\omega(G_n^c \mid F_{m(n)}) \leq \exp\left[-e^{m(n)(\delta_1-\delta_2\log (2d))}\right] = \exp\left[-e^{m(n)(\delta_1-4Ad\psi\log (2d))}\right].
\end{equation}
Recall that $A>0$ thus far has been arbitrary. Choose $A$ small enough so as to satisfy 
$$  \delta_1-4Ad\psi\log (2d)>0  . $$
Then, by the assumptions in \eqref{eqmt1}, the right-hand side of \eqref{eqpart24} is superexponentially small in $n$.

\medskip

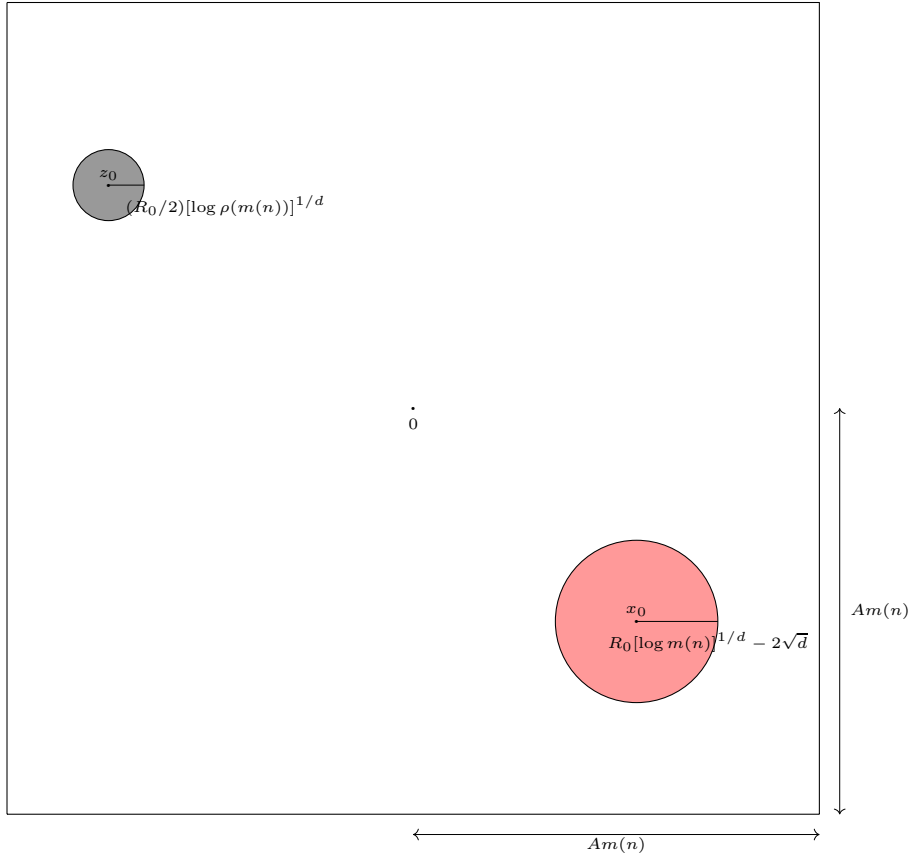
\begin{figure}[ht]
\begin{center}
\begin{tikzpicture}[scale=1.35]
\draw[step=8cm,black,thin] (0,0) grid (8,8);
%\draw[step=1cm,gray,line width=0.01mm] (0,0) grid (8,8);
\draw[<->, very thin] (8.2,0) -- (8.2,4);
\node[] at (8.6,2) {\tiny $A m(n)$};
%\draw[<->, very thin] (8.2,7) -- (8.2,8);
%\node[] at (8.45,7.4) {\tiny $\frac{\rho(t)}{\sqrt{d}}$};
\draw[<->, very thin] (4,-0.2) -- (8,-0.2);
\node[] at (6,-0.3) {\tiny $A m(n)$};
%\draw[<->, very thin] (7,8.2) -- (8,8.2);
%\node[] at (7.4,8.4) {\tiny $\rho(t)/\sqrt{d}$};

\node at (4,4) [thin]{.};
\node[] at (4,3.85) {\tiny $0$};

\filldraw[fill=black!40!white, draw=black] (1,6.2) circle (0.35cm);
\draw[-,very thin] (1,6.2) -- (1.35,6.2);
\node[] at (2.15,6) {\tiny $(R_0/2)[\log \rho(m(n))]^{1/d}$};
\node[] at (1,6.2) [thin]{.};
\node[] at (1,6.3) {\tiny $z_0$};
\filldraw[fill=red!40!white,draw=black] (6.2,1.9) circle (0.8cm);
\draw[-,very thin] (6.2,1.9) -- (7,1.9);
\node[] at (6.9,1.7) {\tiny $R_0[\log m(n)]^{1/d}-2\sqrt{d}$};
\node[] at (6.2,1.9) [thin]{.}; 
\node[] at (6.2,2) {\tiny $x_0$};
\end{tikzpicture}
\end{center}
\caption{Illustration of a snapshot at time $m(n)$ of the strategy described in Part $2$. The small grey ball is $B(z_0,r(m(n)))$, which is a `large' accessible clearing and contains at least $e^{\delta_1 m(n)}$ particles at time $m(n)$. The bigger red ball is the `huge' accessible clearing of radius $R(m(n))$ and has at least one particle on its center at time $\lfloor(1+\delta_2)m(n)\rfloor$.} \label{figure2}
\end{figure}

\bigskip

\textbf{\underline{Part 3}: Growth inside the huge clearing}

\medskip

This part of the proof is similar to the corresponding parts in \cite{O2021} and \cite{O2024}, because over the remaining time interval $[\lfloor(1+\delta_2)m(n)\rfloor,n]$, we will only count the particles whose ancestral lines at time $\lfloor(1+\delta_2)m(n)\rfloor$ are in the two-site set $\{x_0,x_0+\mathbf{e}_1\}$, where $x_0$ is as in \eqref{eqgt}, and over $[\lfloor(1+\delta_2)m(n)\rfloor,n]$ do not escape the clearing $B(x_0,R(m(n)))$. Clearly, such particles do not `feel' the presence of traps, and hence their growth does not depend on whether the traps solely suppress the branching, or apply soft or hard killing. Here, with the aim of a strong law in mind, we not only show that the probability of the `unwanted' events go to zero as $n\to\infty$ but also estimate the rate of convergence to zero of these probabilities.

Let $\varepsilon>0$, fix $\alpha>0$, and for $n>1$, define the events
\begin{equation} \label{eqdn}
D_n:=\left\{\exists\:x_0(\omega)\in\mathbb{Z}^d : Z_{n}(B(x_0,R(m(n))\!-\!\alpha))\geq 2^n\exp\left[-\frac{(k(d,p)+\varepsilon)n}{(\log n)^{2/d}}\right]\!,\, B(x_0,R(m(n)))\cap\mathbb{Z}^d\subseteq K^c\right\}  .  
\end{equation}
The constant $\alpha$ in the definition of $D_n$ will play a role in the final part of the proof in Section~\ref{part4}.

Set $c(\omega)=1/P^\omega(S)$, and start with the estimate
\begin{align} \label{part3eq1}
P^\omega(D_n^c \mid S) &\leq c(\omega)\left[P^\omega(D_n^c \cap S \cap G_n) + P^\omega(D_n^c \cap S \cap G_n^c) \right] \nonumber \\
&\leq c(\omega)\left[P^\omega(D_n^c \mid G_n) +P^\omega(S\cap G_n^c\cap F_{m(n)})+ P^\omega(S\cap G_n^c\cap F_{m(n)}^c)   \right] \nonumber \\
&\leq c(\omega)\left[P^\omega(D_n^c \mid G_n) +P^\omega(G_n^c\mid F_{m(n)})+ P^\omega(F_{m(n)}^c\mid S)   \right] .
\end{align}
The second and third terms on the right-hand side of \eqref{part3eq1} can be estimated via \eqref{part1eq9} and \eqref{eqpart24}: for each $\omega\in\Omega_0 \cap \{\mathbf{0}\in\mathcal{C}\}$, for all large $n$,
\begin{equation} \label{part3eq2}
P^\omega\left(G_n^c \mid F_{m(n)}\right) + P^\omega(F_{m(n)}^c \mid S) \leq \exp\left[-e^{-m(n)(\delta_1-4Ad\psi\log 2)}\right] + e^{-c(\log n)^{1/d}} . 
\end{equation}
In the rest of the current part of the proof, we estimate $P^\omega(D_n^c \mid G_n)$.

Conditional on the event $G_n$ (see \eqref{eqgt}), denote by $\widehat{Z}$ a sub-BRW initiated from the two-site set $\{x_0,x_0+\mathbf{e}_1\}$ at time $\lfloor(1+\delta_2)m(n)\rfloor$. By the Markov property, $\widehat{Z}$ is a BRW in its own right. Define $\widehat{R}:\mathbb{N}_{+}\to\mathbb{R}_+$ such that $\widehat{R}(n-\lfloor(1+\delta_2)m(n)\rfloor)=R(m(n))-\alpha$ for all large $n$. (Respecting the conditions in \eqref{eqmt1}, we may and do choose $m(n)$ such that $n-\lfloor(1+\delta_2)m(n)\rfloor$ is increasing on $n\geq n_0$ for some $n_0\in\mathbb{N}$. Therefore, $n_1-\lfloor(1+\delta_2)m(n_1)\rfloor=n_2-\lfloor(1+\delta_2)m(n_2)\rfloor$ implies that $n_1=n_2$ for $n_1\wedge n_2\geq n_0$, where we use $a\wedge b$ to denote the minimum of the numbers $a$ and $b$.) Next, let $k=n-\lfloor(1+\delta_2)m(n)\rfloor$, $\widehat{B}_{k,y}=B(y,\widehat{R}(k))$, and
$$ \widehat{p}_k := \mathbf{P}_{x_0}\left(\sigma_{\widehat{B}_{k,x_0}}\geq k\right) =  \mathbf{P}_{x_0+\mathbf{e}_1}\left(\sigma_{\widehat{B}_{k,x_0+\mathbf{e}_1}}\geq k\right) = \mathbf{P}_{0}\left(\sigma_{\widehat{B}_{k,0}}\geq k\right).  $$
Choose $\alpha\!>\!1$. Then, $\widehat{B}_{k,x_0}\cup\widehat{B}_{k,x_0+\mathbf{e}_1} \subseteq B(x_0,R(m(n)))$, which implies that both $\widehat{B}_{k,x_0}$ and $\widehat{B}_{k,x_0+\mathbf{e}_1}$ are clearings. Proposition~\ref{prop4} with $\gamma(k)=\exp[-\widehat{R}(k)]$ therein then implies that for all large $k$,
\begin{equation} \label{part3eq3}
P^\omega\left(\widehat{Z}_k(\widehat{B}_{k,x_0}\cup\widehat{B}_{k,x_0+\mathbf{e}_1})<e^{-\widehat{R}(k)} \widehat{p}_k 2^k \,\big|\, G_n \right)\leq 
P\left(Y_k< e^{-\widehat{R}(k)} \widehat{p}_k 2^k\right) \leq e^{-c\,\widehat{R}(k)},
\end{equation} 
where $c$ is a positive constant and $Y_k$ denotes the number of particles at time $k$ of a standard BRW whose ancestral lines up to $k$ do not exit $B(0,\widehat{R}(k))$. It follows from \eqref{eq5prop4} that
\begin{equation} \label{part3eq4}
\widehat{p}_k \geq \exp\left[-\frac{\lambda_d k}{(\widehat{R}(k))^2}(1+o(1))\right] = \exp\left[-\frac{k(d,p) (n-\lfloor(1+\delta_2)m(n)\rfloor)}{(\log\,m(n))^{2/d}}(1+o(1))\right],
\end{equation}
where we have used that $k(d,p)=\lambda_d/R_0^2$ and that $\widehat{R}(k)=R(m(n))-\alpha$ together with \eqref{eqpart21}.
It follows from the assumptions on $m(n)$ in \eqref{eqmt1} that  
$$  \frac{n-\lfloor(1+\delta_2)m(n)\rfloor}{(log\, m(n))^{2/d}}  \asymp \frac{n}{(\log n)^{2/d}} , $$
by which we can continue \eqref{part3eq4} with
\begin{equation} \label{part3eq5}
\widehat{p}_k \geq \exp\left[-\frac{k(d,p) n}{(\log n)^{2/d}}(1+o(1))\right]. 
\end{equation}
Then, we have for any $\varepsilon>0$,
\begin{equation} \nonumber
2^n\exp\left[-\frac{(k(d,p)+\varepsilon)n}{(\log n)^{2/d}}\right] = 2^k 2^{\lfloor(1+\delta_2)m(n)\rfloor}\exp\left[-\frac{(k(d,p)+\varepsilon)n}{(\log n)^{2/d}}\right] \leq e^{-\widehat{R}(k)} \widehat{p}_k 2^k
\end{equation}
for all large $n$, where we have used \eqref{part3eq5}, the first assumption on $m(n)$ in \eqref{eqmt1}, and that $\widehat{R}(k)=R(m(n))-\alpha=o(n(\log n)^{-2/d})$  in passing to the inequality. It then follows from \eqref{part3eq3} and the definitions of $G_n$ and $\widehat{Z}$ that for all large $n$,
\begin{equation} \label{part3eq6}
P^\omega(D_n^c \mid G_n) \leq P^\omega\left(\widehat{Z}_k(\widehat{B}_{k,x_0}\cup\widehat{B}_{k,x_0+\mathbf{e}_1})<e^{-\widehat{R}(k)} \widehat{p}_k 2^k \,\big|\, G_n \right) \leq  e^{-c\,\widehat{R}(k)}.
\end{equation}
Collecting the bounds in \eqref{part3eq1}, \eqref{part3eq2} and \eqref{part3eq6}, we conclude that for each $\omega\in\Omega_0 \cap \{\mathbf{0}\in\mathcal{C}\}$, there exists a constant $c_*=c_*(d,p)>0$ (independent of $\varepsilon$ and the environment $\omega$) such that for all large $n$,
\begin{equation}  \label{part3eq7}
P^\omega(D_n^c \mid S) \leq e^{-c_*(\log n)^{1/d}} .
\end{equation}
Finally, simply note that $\left\{N_n<2^n\exp\left[-\frac{(k(d,p)+\varepsilon)n}{(\log n)^{2/d}}\right]\right\} \subseteq D_n^c$.

\subsection{Passing to the strong law} \label{part4}

In this part of the proof, we extend the argument in Part 3 of Section~\ref{bootstrap} to pass from a weak LLN to a strong law. Observe that \eqref{part3eq7} proves that $\mathbb{P}$-a.s.\ on the set $\{\mathbf{0}\in\mathcal{C}\}$,
\begin{equation} \label{part4eq1}
\underset{n\rightarrow\infty}{\liminf}\, (\log n)^{2/d}\left(\frac{\log N_n}{n}-\log 2\right)\geq-k(d,p) \quad\:\: \text{in}\:\:\widehat{P}^\omega\text{-probability}  .
\end{equation}
This would be the lower bound of a type of weak LLN, corresponding to the strong law in Theorem~\ref{thm1}. In this part of the proof, the convergence in \eqref{part4eq1} will be improved to almost sure convergence. We note that in \cite{O2021}, the continuum analogue of the current problem was treated in the case of \emph{mild} obstacles, where the total mass $N_t$ of the branching process was almost surely nondecreasing in $t$ as there was no killing mechanism. In the current model, this important regularity property of $N_n$ is lost, which makes the analysis for a Borel-Cantelli proof in this part more challenging.

Define the function $f:\mathbb{N}_+\to\mathbb{R}_+$ by
\begin{equation} \label{part4eq2}
f(k) = \bigg\lfloor \exp\left\{\left(\frac{2}{c_*}\right)^d(\log k)^d\right\} \bigg\rfloor .
\end{equation} 
Using \eqref{part4eq2}, it can be shown that as $k\to\infty$,
\begin{equation} \label{part4eq3}
f(k+1)-f(k) \sim d\left(\frac{2}{c_*}\right)^d f(k) \frac{(\log k)^{d-1}}{k}.
\end{equation}
Then, setting $g(k)=f(k+1)-f(k)$, one can show using \eqref{eqpart21}, \eqref{part4eq2} and \eqref{part4eq3} that 
\begin{equation}
\lim_{k\to\infty} \frac{g(k) R^2(m(f(k)))}{f(k)} = 0.  \nonumber
\end{equation}
This implies that for any $\varepsilon>0$, for all large $k$,
\begin{equation} \nonumber
f(k+1)-f(k) \leq \frac{\varepsilon f(k)}{4\log 2[\log f(k)]^{2/d}} ,
\end{equation}
and hence that 
\begin{equation} \label{part4eq42}
f(k) \log 2-\frac{k(d,p)+\varepsilon/4}{[\log f(k)]^{2/d}}f(k)  \geq f(k+1) \log 2 -\frac{k(d,p)+\varepsilon/2}{[\log f(k+1)]^{2/d}}f(k+1)
\end{equation}
since $f(z)/[\log f(z)]^{2/d}$ is increasing for large $z$.

We continue by defining two families of events similar to the family $(D_n)_{n>1}$ defined in \eqref{eqdn}. Firstly, by Proposition~\ref{prop1}, keeping the notation and setting $c=1/2=\varepsilon$ therein, there exists $R_c>0$ such that 
\begin{equation} \label{part4eq00}
P\left(Y_n(R_c) = 0 \right) = P\left(Y_n(R_c) < \, \frac{1}{2}\,p_n(R_c)\, 2^n\right) \leq \frac{1}{2} 
\end{equation}
for all $n>R_c$, where $Y_n(R_c)$ denotes the number of particles at time $n$ of a standard BRW whose ancestral lines up to time $n$ do not exit $B(0,R_c)$. Since $\{Y_k(R_c)=0\}$ is an increasing family of events in $k$, it follows from \eqref{part4eq00} that 
\begin{equation} \label{part4eq01}
P\left(Y_k(R_c) = 0 \right) \leq \frac{1}{2} 
\end{equation}
for all $k\geq 1$. For $n\geq 2$, set 
$$ \widetilde{R}(n) = R(m(n)) - R_c.    $$
Now let $\varepsilon>0$, and for $k\in\mathbb{N}_{\geq 2}$, define the events
\begin{equation} \nonumber
\widehat{D}_k:=\left\{\exists\:x_0\in\mathbb{Z}^d:\inf_{n\in [f(k),f(k+1)]\cap\mathbb{Z}} Z_{n}(B(x_0,\widetilde{R}(f(k))+R_c))\geq 2^{f(k+1)}\exp\left[-\frac{(k(d,p)+\varepsilon)f(k+1)}{(\log f(k+1))^{2/d}}\right] \right\}
\end{equation}
and
\begin{align} 
\widehat{D}_k^1:=\bigg\{& \exists\:x_0\in\mathbb{Z}^d : Z_{f(k)}(B(x_0,\widetilde{R}(f(k))))\geq 2^{f(k)}\exp\left[-\frac{(k(d,p)+\varepsilon/4)f(k)}{(\log f(k))^{2/d}}\right]  \nonumber \\
& \:\:\:\text{and}\:\: B(x_0,R(m(f(k))))\cap\mathbb{Z}^d\subseteq K^c \bigg\}  .  \nonumber
\end{align} 
Note that since $2^n\exp\left[-\frac{(k(d,p)+\varepsilon)n}{(\log n)^{2/d}}\right]$ is increasing in $n$ for all large $n$, 
\begin{equation} \label{part4eq0}
\widehat{D}_k \subseteq \left\{N_n\geq 2^n\exp\left[-\frac{(k(d,p)+\varepsilon)n}{(\log n)^{2/d}}\right] \:\:\forall\:n\in[f(k),f(k+1)]\cap\mathbb{Z} \right\}   
\end{equation}
for all large $k$. Therefore, for a Borel-Cantelli proof, we look for a suitable quenched upper bound on $P^\omega(\widehat{D}_k^c \mid S)$ that holds for all large $k$. Start with the estimate
\begin{equation} \label{part4eq7}
P^\omega(\widehat{D}_k^c \mid S) \leq P^\omega(\widehat{D}_k^c \mid \widehat{D}_k^1\cap S) + P^\omega((\widehat{D}_k^1)^c \mid S) \leq \frac{1}{P^\omega(S)} P^\omega(\widehat{D}_k^c \mid \widehat{D}_k^1) + P^\omega((\widehat{D}_k^1)^c \mid S).
\end{equation}
For $\omega\in\Omega_0\cap\{\mathbf{0}\in\mathcal{C}\}$, the second term on the right-hand side of \eqref{part4eq7} can be estimated via \eqref{part3eq7} as
\begin{equation} \label{part4eq8}
P^\omega((\widehat{D}_k^1)^c \mid S) \leq e^{-c_*(\log f(k))^{1/d}} 
\end{equation}
for all large $k$.

Next, we estimate $P^\omega(\widehat{D}_k^c \mid \widehat{D}_k^1)$. For simplicity, set
$$ \varphi(k,\varepsilon) := 2^{f(k)}\exp\left[-\frac{(k(d,p)+\varepsilon/4)f(k)}{(\log f(k))^{2/d}}\right] .  $$ 
Conditional on the event $\widehat{D}_k^1$, let $\mathcal{M}_k$ be the set of $\left\lfloor \varphi(k,\varepsilon) \right\rfloor$ particles inside $B(x_0,\widetilde{R}(f(k)))$ at time $f(k)$ that are closest to the site $x_0$. Let us refer to a particle in $\mathcal{M}_k$ generically as $v$, and let $X_v(f(k))$ be its position at time $f(k)$. Let $p_{c}$ be the probability that at least one ancestral line segment of the sub-BRW initiated by $v$ at time $f(k)$ does not hit $K$ and does not leave $B(X_v(f(k)),R_c)$ over the period $[f(k),f(k+1)]$. Since $B(X_v(f(k)),R_c)\subseteq B(x_0,R(m(f(k))))$ and $B(x_0,R(m(f(k))))$ is a clearing on the event $\widehat{D}_k^1$, \eqref{part4eq01} implies that $p_c\geq 1/2$.

On the other hand, if $\widehat{D}_k^c$ is realized conditional on $\widehat{D}_k^1$, this implies due to \eqref{part4eq42} that at most $(1/4)\left\lfloor \varphi(k,\varepsilon)\right\rfloor$ out of $\left\lfloor \varphi(k,\varepsilon) \right\rfloor$ sub-BRWs initiated at time $f(k)$ by particles in $\mathcal{M}_k$ have at least one particle alive at time $f(k+1)$ with ancestral line contained in $B(X_v(f(k)),R_c)$ and hence contained in $B(x_0,\widetilde{R}(f(k))+R_c)$ throughout the interval $[f(k),f(k+1)]$. We now bound the probability of this event from above. Let $W_0$ be the number of sub-BRWs started by particles in $\mathcal{M}_k$ at time $f(k)$ having at least one particle that is alive at time $f(k+1)$ with ancestral line never leaving $B(X_v(f(k)),R_c)$ over $[f(k),f(k+1)]$. By independence of particles in $\mathcal{M}_k$, $W_0$ is a binomial random variable with $\left\lfloor \varphi(k,\varepsilon) \right\rfloor$ trials and `success probability' $p_c$. For a generic Binomial random variable $W$ with $m$ trials, success probability $p$, and corresponding law $\mathcal{P}$, a standard Chernoff bound yields 
\begin{equation} \label{part4eq9}
\mathcal{P}(W\leq (1-\delta)mp) \leq e^{-\frac{\delta^2 mp}{2}}, \quad \:\: 0<\delta<1 .
\end{equation} 
Now, set $W=W_0$ with $m=\left\lfloor \varphi(k,\varepsilon) \right\rfloor$ and $p=p_c$ as above, and choose $\delta=1-1/(4p_c)$ in \eqref{part4eq9}. With these choices, observe that $\delta>0$ since $p_c\geq 1/2$, and \eqref{part4eq9} implies that
\begin{align} 
P^\omega(\widehat{D}_k^c \mid \widehat{D}_k^1) &\leq P^\omega\left(W_0\leq (1/4)\left\lfloor \varphi(k,\varepsilon) \right\rfloor \mid \widehat{D}_k^1\right)  \nonumber \\
&\leq \exp\left[-\frac{(1-1/(4p_c))^2}{2}p_c   \left\lfloor \varphi(k,\varepsilon)\right\rfloor \right] , \label{part4eq10}
\end{align}  
which is superexponentially small in $k$ since by \eqref{part4eq2} we clearly have 
$$ \lim_{k\to\infty}\frac{\varphi(k,\varepsilon)}{k}=\lim_{k\to\infty}\frac{1}{k}\,2^{f(k)}\exp\left[-\frac{(k(d,p)+\varepsilon/4)f(k)}{(\log f(k))^{2/d}}\right]=\infty  .$$ 
Inserting the bounds in \eqref{part4eq8} and \eqref{part4eq10} into \eqref{part4eq7} yields
\begin{equation} \label{part4eq11}
P^\omega(\widehat{D}_k^c \mid S) \leq e^{-c_*(\log f(k))^{1/d}}
\end{equation}
for all large $k$. Then, by \eqref{part4eq11} and the choice of $f(k)$ in \eqref{part4eq2}, there exist constants $c_0>0$ and $k_0>0$ such that
\begin{equation}
\sum_{k=1}^\infty P^\omega(\widehat{D}_k^c \mid S) \leq c_0 + \sum_{k=k_0}^\infty e^{-c_*(\log f(k))^{1/d}} =c_0 + \sum_{k=k_0}^\infty \frac{1}{k^2} < \infty. \nonumber
\end{equation}     
By the Borel-Cantelli lemma, on a set of full $\widehat{P}^\omega$-measure, only finitely many events $\widehat{D}_k^c$ occur. In view of \eqref{part4eq0}, we arrive at
\begin{equation} \nonumber
\underset{n\to\infty}{\liminf}\, (\log n)^{2/d}\left(\frac{\log N_n}{n}-\log 2\right) \geq -[k(d,p)+\varepsilon] \quad\:\: \widehat{P}^\omega\text{-a.s.} 
\end{equation}
Since this holds for each $\omega\in\Omega_0\cap\{\mathbf{0}\in\mathcal{C}\}$ and $\mathbb{P}(\Omega_0)=1$, this completes the proof of the lower bound of Theorem~\ref{thm1}.

%follow a nearest neighbor path, completely within $\mathcal{C}$, starting at the origin at time $0$ and arriving at site $x_0$ at time $N$. 

%and let $h:\mathbb{N}_{\geq 2}\to\mathbb{N}$ be such that 
%$$ h(n)=\lfloor\rho(n)\rfloor . $$  
%We will use $h(n)$ as a time scale in what follows.

%\section*{References}
\bibliographystyle{plain}

\end{document}